\theoremstyle{plain}
\newtheorem{theorem}{Theorem}[section]
\declaretheorem[
  name=Fact,          
  sibling=theorem,      
  refname={Fact,Facts}   
]{fact}
\declaretheorem[name=Lemma,       sibling=theorem, refname={Lemma,Lemmas}]{lemma}
\declaretheorem[name=Corollary,   sibling=theorem, refname={Corollary,Corollaries}]{corollary}
\declaretheorem[name=Proposition, sibling=theorem, refname={Proposition,Propositions}]{proposition}
\declaretheorem[name=Assumption,  sibling=theorem, refname={Assumption,Assumptions}]{assumption}
\declaretheorem[name=Example,     sibling=theorem, refname={Example,Examples}]{example}
\declaretheorem[name=Remark,      sibling=theorem, refname={Remark,Remarks}]{remark}
 \declaretheorem[
  name=Definition,
  sibling=theorem,
  refname={Definition,Definitions},
  style=definition
]{definition}
\newcommand{\forae}{%
  \tikz[baseline={(forall.base)}]{
    \node[inner sep=0pt, outer sep=0pt] (forall) {$\forall$};
    \fill[white] (-0.06em,0.05ex) rectangle (0.06em,0.25ex);
    \draw[line width=0.04em] (-0.06em, 0.7ex) -- (0.06em, 0.7ex);
  }
}
\definecolor{shadethmcolor}{HTML}{FFFFFF}
\definecolor{shaderulecolor}{HTML}{000000}
\newcommand{\R}{\mathbb{R}}
\newcommand{\fg}{\mathfrak{g}}
\newcommand{\dom}{\mathrm{dom}}
\newcommand{\rank}{\mathrm{rank}\hspace{.3mm}}
\newcommand{\diag}{\mathrm{diag}}
\newcommand{\gph}{\mathrm{gph}}
\newcommand{\N}{\mathbb{N}}
\newcommand{\lip}{\mathrm{lip}}
\def\O{\mathrm{O}}
\def\GL{\mathrm{GL}}
\def\p{^}
\newcommand{\co}{\mathrm{co}\hspace*{.3mm}}
\def\loc{\mathrm{loc}}
\def\cf{\accentset{\circ}{f}}
\def\of{\overline{f}}
\def\epi{\mathrm{epi}}
\def\im{\mathrm{Im}\hspace{.3mm}}
\newcommand{\inte}{\mathrm{int}}
\title{\LARGE On the geometry of flat minima}
\begin{document}

\author{\large C\'edric Josz\thanks{\url{cj2638@columbia.edu}, IEOR, Columbia University, New York.}}
\date{}

\maketitle

\begin{center}
    \textbf{Abstract}
    \end{center}
    \vspace*{-3mm}
 \begin{adjustwidth}{0.2in}{0.2in}
 ~~~~ What does it mean to be flat?
 We propose to define it by measuring the maximal variation around a point, or from a dual perspective, the distance to neighboring level sets.
 After developing some calculus rules, we show how flat minima, conservation laws, and symmetries are intertwined. Gradient flows of conserved quantities are of particular interest, due to their flattening properties.
\end{adjustwidth} 
\vspace*{3mm}
\noindent{\bf Keywords:} dynamical systems, semi-algebraic geometry, variational analysis.
\vspace*{3mm}

\noindent{\bf MSC 2020:} 14P10, 34A60, 49-XX.

\tableofcontents

\section{Introduction}
\label{sec:Introduction}

Flat minima were informally introduced by Hochreiter and Schmidhuber \cite{hochreiter1994simplifying,hochreiter1997flat} in the context of deep learning as connected regions of minima where the objective is nearly constant. In a desire to formalize this idea, it was later suggested to use the volume of connected components of sublevel sets \cite[Definition 1]{dinh2017sharp}. Around the same time, an empirical measure of sharpness \cite[Metric 2.1]{keskar2017large} was proposed to analyze the role of batch size in the stochastic subgradient method for training neural networks. For nonnegative functions, it was redefined as the maximal ratio of the function variation over one plus the function value, over a ball of fixed radius \cite[Definition 2]{dinh2017sharp}.

More recently, flat minima of deep matrix factorization were defined as global minima which minimize the trace \cite{gatmiry2023inductive} or the maximum eigenvalue \cite{mulayoff2020unique,liu2021noisy,marion2024deep} of the Hessian of the objective function. A scaled trace of the Hessian tailored for matrix factorization \cite{ding2024flat} was also proposed. Others still use the trace of the Hessian combined with gradient dynamics for $C^4$ functions \cite[Definition 3]{ahn2024escape}.

Evidently, there is no commonly agreed upon definition of flatness. Yet, it has been reported that, when training deep neural networks, algorithms tend to find flat minima \cite{keskar2017large,wu2018sgd}. These may have good generalization properties \cite{hochreiter1997flat,andriushchenko2023modern}. Due to their possible role in deep learning theory, it seems enviable to have a definition that is not problem specific and somehow captures the previous ones. It should also fill in the gaps left by prior work. 

As it stands, being a flat global minimum of an overparametrized ReLU neural network has no meaning. To make things concrete, consider
$$f(x) = (x_2 \mathrm{ReLU}(x_1)+x_3 - 1)\p 2$$
where $\mathrm{ReLU}(t) = \max\{0,t\}$. The objective function is differentiable at its global minima, so its gradient is defined and equal to zero (details in \cref{eg:nn}). But it is not twice differentiable there, so its Hessian is not defined. As for the volumes of sublevel sets, they are generally not finite on ReLU networks \cite[Theorem 2]{dinh2017sharp}, nor on the most simple neural network, i.e., $f(x) = (x_1x_2-1)\p 2$.

Even when the Hessian is available, flatness is currently ill-defined. Minimizing the maximal eigenvalue $\lambda_1 (\nabla \p 2 f(x))$ over the solution set arbitrarily discards higher-order variation. This is problematic. For example,
$$f(x) = x_2\p 2 +x_1\p2x_2\p 4$$
obeys $\nabla f(x_1,0)= (0,0)$ and $\lambda_1 (\nabla\p 2 f(x_1,0)) = 2$ for all $x_1 \in \R$, according to which all the global minima $(x_1,0)$ are allegedly flat. 
But factoring in the fourth-order growth actually suggests that $(0,0)$ is the unique flat minimum (see \cref{eg:4th}). This is not merely a theoretical matter, as gradient descent with sufficiently slowly diminishing step lengths converges to the origin. This can be seen in \cref{fig:level}, and is proved in the sequel \cite{josz2026implicit}.

\begin{figure}
    \centering
    \includegraphics[width=0.8\linewidth]{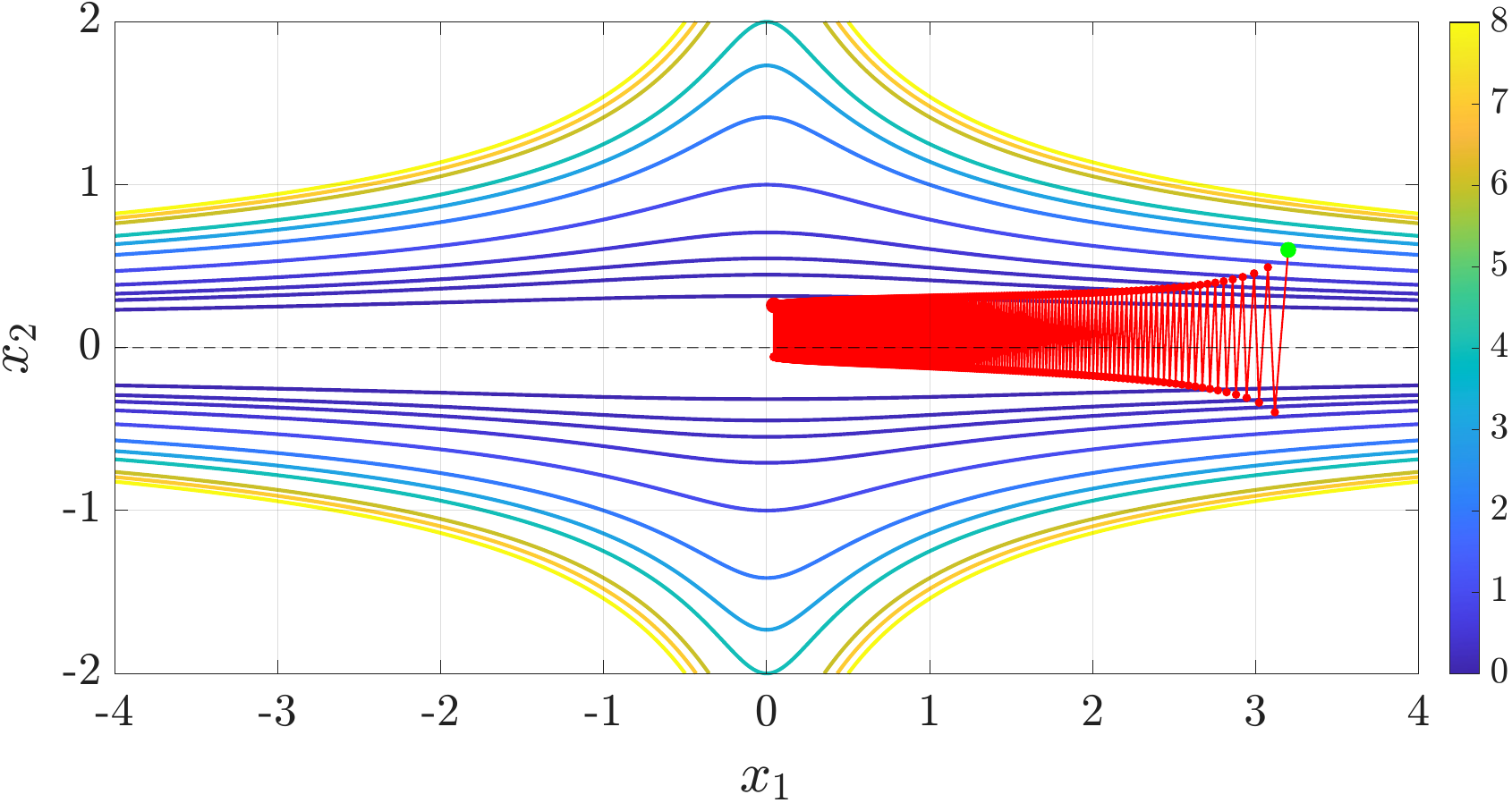}
    \caption{1000 iterations of gradient descent applied to $f(x_1,x_2)=x_2^2+x_1^2x_2^4$ with step length $(k+1)\p{-1/6}$ initialized at $(3.2,0.6)$.}
    \label{fig:level}
\end{figure}

Leaving algorithmic aspects aside, in this work we propose to define flatness by measuring the maximal variation around a point, or equivalently, for a large function class\footnote{The class of locally Lipschitz definable functions with nowhere dense level sets. If one is solely interested in the flatness of global minima, it suffices for the set of global minima to be nowhere dense.}, the distance to neighboring level sets.
It is inspired by topographic maps used in mountain hiking: concentrated contour lines signal significant elevation change, while diffuse ones suggest a flat region like a valley. We actually define a preorder on $\R\p n$, which is a total preorder if the objective is definable in an o-minimal structure on the real field \cite{van1998tame}. This gives a precise meaning to the adjectives flatter and sharper.

At a glance, our results are as follows. We first cover basic aspects: 
the definition of flatness, the properties of maximizing curves, and several calculus rules. The rules rely on the subgradient, gradient, Hessian, and possibly higher-order derivatives. 
Notably, for a local minimum $\overline{x}$ of a smooth function $f$, $\overline{x}$ is flat only if it is a local minimum of $\lambda_1 (\nabla \p 2 f(x))$ subject to $f(x)=f(\overline{x})$. The converse holds if it is a strict local minimum of the constrained problem.

Second, we show how conserved quantities $c(x)$ in subgradient dynamics
$$\dot{x} \in -\overline{\partial} f(x)$$
provide a useful tool for analyzing flatness, a theme of recent interest \cite{zhao2023symmetries}. On the one hand, they enable one to detect situations where level sets are expanding in a certain direction. In that case, gradient trajectories of the conserved quantity, modeled by
$$\dot{x} = -\nabla c(x),$$
flatten over time. In particular, if $x(0)$ is a local minimum of $f$ and $f$ is smooth, then $$\lambda_1 (\nabla \p 2 f(x(t)))\leq e\p{-\omega t}\lambda_1 (\nabla \p 2 f(x(0)))$$ for some constant $\omega >0$. On the other hand, quadratic conserved quantities $C(x)$ arising from linear symmetries of $f$ can help characterize flatness. For instance, if $x$ is a flat minimum of a smooth function $f$, then there exists a maximal eigenvector $v$ of $\nabla\p 2 f(x)$ such that $C(v) = 0$. 


Third, we determine flat minima in a series of examples that were out of reach with previous definitions and tools. They demonstrate that the definition of flatness proposed in this work is both versatile and workable. We also illustrate the connection with symmetry and conservation. 

This work clarifies the picture in overparamatrized matrix factorization
    \begin{equation*}
    \begin{array}{cccc}
         f: & \mathbb{R}^{m\times r}\times \mathbb{R}^{r\times n} & \longrightarrow & \mathbb{R} \\
        & (X,Y) & \longmapsto & \|XY-M\|_F\p 2
    \end{array}
    \end{equation*}
where $M \in \mathbb{R}^{m\times n}$, $\|\cdot\|_F$ is the Frobenius norm, $r\geq \rank(M)$, and $C(X,Y) = X\p T X = Y Y\p T$. Some authors \cite{ding2024flat} report the flat minima are ``balanced'', meaning that $X\p T X = Y Y\p T$, while others \cite{mulayoff2020unique} say that they are ``nearly balanced''. With our definition, we find that balanced global minima are flat, but the converse is false. While $X\p T X - Y Y\p T$ can be nonzero at flat minima, there exists a balanced maximal eigenvector of the Hessian $\nabla \p 2 f(X,Y)$. In passing, we note that flat minima admit a simple characterization. Namely, if $XY=M$, then
$$(X,Y) ~\text{is flat} ~~~\Longleftrightarrow ~~~ \|X\|_2=\|Y\|_2 = \sqrt{\|M\|_2}$$
where $\|\cdot\|_2$ is the spectral norm. 

Our analysis also reveals a new local-global property of matrix factorization. While it is known the every local minimum is a global minimum \cite{baldi1989neural,valavi2020revisiting}, it was not known that locally flat minima are globally flat. A related novelty is that any gradient trajectory of the conserved quantity $\|C\|_F\p 2$ initialized at a global minimum converges to a flat global minimum.

The paper is organized as follows. \cref{sec:Background} contains background material on several branches of mathematics.
\cref{sec:Basic aspects} treats basic aspects of flat minima. \cref{sec:Flatness and conservation} builds on them to forge a link with conservation laws and symmetries. 
Finally \cref{sec:Examples} provides several examples of flat minima.

\section{Background}
\label{sec:Background}

This works relies on several branches of mathematics. We include a vignette of each one after introducing some standard notations. As usual, 
\begin{gather*}
    \mathbb{N} = \{0,1,\hdots\}, ~~~ \N\p* = \N\setminus \{0\}, ~~~\llbracket 1,k\rrbracket = \{1,\hdots,k\}, \\
    \overline{\R}=\R\cup\{\infty\},~~~ \R_-=[0,\infty),~~~ \R_+=[0,\infty).
\end{gather*} 
The sign of a real number $t$ is defined by 
 \begin{equation*}
     \mathrm{sign}(t) = 
     \left\{
     \begin{array}{cl}
          t/|t| & \text{if} ~ t\neq 0, \\
          \left[-1,1\right] & \text{else.} 
     \end{array}  
     \right.
\end{equation*}
The symbol $\land$ means `and', $\lor$ means `or', and $\neg$ means `negation'. Let $\langle \cdot,\cdot\rangle$ and $|\cdot|$ respectively denote the dot product and the Euclidean norm on $\R\p n$. Let $B_r(x)$, $\overline{B}_r(x)$, and $S_r(x)$ respectively denote the open ball, closed ball, and sphere of center $x\in \R\p n$ and radius $r \geq 0$. In particular, $B\p n = \overline{B}_1(0)$ and $S\p {n-1} = S_1(0)$.

Given a matrix $M\in\R\p{m\times n}$, $M\p T$ denotes the transpose. If $A:U\to V$ is a linear map between two finite dimensional inner product spaces, then $A\p *:V\to U$ denotes the adjoint. Let $\|\cdot\|_F$, $\|\cdot\|_1$, $\|\cdot\|_2$, $\|\cdot\|_*$, $\|\cdot\|_{p,q}$, and $\rho(\cdot)$ respectively denote the Frobenius norm, entrywise $\ell_1$-norm, spectral norm, nuclear norm, $(p,q)$-induced norm, and spectral radius. Also, $\langle \cdot,\cdot\rangle_F$ denotes the Frobenius norm. Given a symmetric matrix $M \in \R\p{n\times n}$, $\lambda_1(M)$ and $E_1(M)$ respectively denote its maximal eigenvalue and its associated eigenspace, whose nonzero elements are referred to as maximal eigenvectors. We use $M\succcurlyeq 0$ to denote that $M$ is positive semidefinite.

\subsection{Ordered sets}
\label{subsec:Ordered sets}
    A binary relation $R$ on a set $X$ is a subset of $X\times X$. An element $x\in X$ is related to $y\in X$, denoted $xRy$, if $(x,y)\in R$. A relation $R$ is 
    \begin{enumerate}[label=\rm{(\roman{*})}]
        \item reflexive if $\forall x\in X, ~ xRx$; \vspace{-2mm}
        \item irreflexive if $\forall x\in X, ~ \neg xRx$; \vspace{-2mm}
        \item transitive if $\forall x,y,z\in X, ~ xRy \land yRz \Longrightarrow xRz$; \vspace{-2mm}
        \item antisymmetric if $\forall x,y\in X, ~ xRy \land yRx \Longrightarrow x = y$; \vspace{-2mm}
        \item total if $\forall x,y\in X, x\neq y \Longrightarrow x\leq y \lor y\leq x$.
    \end{enumerate}
    A preorder $\leq$ on $X$ is a binary relation that is reflexive and transitive \cite[Definition 3.1]{harzheim2005ordered}. Let $\nleq$ denote the complementary relation, i.e., $(X\times X) \setminus \leq$. A strict preorder $<$ on $X$ is a binary relation that is irreflexive and transitive. A preorder $\leq$ induces a strict order $<$ defined by $x<y ~ \Longleftrightarrow ~  x\leq y\land y\nleq x$ \cite[Definition 3.2]{harzheim2005ordered}. An order $\leq$ on $X$ is a preorder that is antisymmetric. In that case, $x\leq y\land y\nleq x ~ \Longleftrightarrow ~ x\leq y\land x\neq y$.
   
   Let $S$ be a subset of an ordered set $X$. An element $y\in X$ is a least (resp. greatest) element of $S$ if $y \in S$ and $x \leq y$ (resp. $y\leq x$) for all $x \in S$. A lower (resp. upper) bound of $S$ is an element $y\in X$ such that $y\leq x$ (resp. $x\leq y$) for all $x \in S$. A lower bound of $S$ is an infimum of $S$ if it is a greatest lower bound. If an infimum exists, then it is unique. An infimum is a minimum if it belongs to $S$. The set of infima and minima are respectively denoted $\inf S$ and $\min S$. Suprema and maxima are defined analogously, and denoted $\sup S$ and $\max S$ respectively. 
   
    A totally ordered set is complete if every nonempty subset that has a lower bound, has a greatest lower bound. The real numbers $\R$ is such an example. Let $S \subseteq \R$. If $S$ has no lower bound, then $\inf S = -\infty$. If $S=\emptyset$, then $\inf S = \infty$. 

    \subsection{Differential calculus}
\label{subsec:Differential calculus}

Given some vector spaces $V_1,\hdots,V_k$ and $W$, a map $F:V_1 \times \cdots \times V_k \rightarrow W$ is multilinear if it is linear in each variable taken separately when the others are held fixed. It is symmetric if $V_1=\cdots=V_k$ and $F(x_{\sigma(1)},\hdots,x_{\sigma(k)}) = F(x_1,\hdots,x_k)$ for any permutation $\sigma$ of $\llbracket 1,k\rrbracket$. When it is real-valued, i.e., $W = \mathbb{R}$, $F$ is called a multilinear form. Given a symmetric multilinear map $F:V\times \cdots\times V\to W$ and norms $\|\cdot\|_V,\|\cdot\|_W$ on $V,W$, consider the norm \cite[Theorem A]{bochnak1971polynomials}
$$\|F\| = \sup_{\|v\|_V = 1} \|F(v,\hdots,v)\|_W.$$

A function $f:U \to W$ where $U\subseteq V$ is open is Fr\'echet differentiable \cite{cartan1971differential} at $\overline{x}\in U$, or $D\p 1$ at $\overline{x}$, if there exists a bounded linear map, denoted $f'(\overline{x}):V\to W$, such that
$$f(x) = f(\overline{x}) + f'(\overline{x})(x-\overline{x}) +o(|x-\overline{x}|)$$
where $o:\R_+\to W$ is a function such that $o(t)/t \to 0$ at $t\searrow 0$. We say that $f$ is $C\p 1$ at $\overline{x}$ if $f$ is $D\p 1$ near $\overline{x}$ and $f'$ is continuous at $\overline{x}$. By induction, for any integer $k\geq 2$, we may define $f$ to be $D\p k$ (resp. $C\p k$) at $\overline{x}$ if it is $D\p {k-1}$ (resp. $C\p {k-1}$) near $\overline{x}$ and $f\p{(k-1)}$ is $D\p 1$ (resp. $C\p 1$) at $\overline{x}$. We only define $D\p k$ and $C\p k$ for $k\in \N\p *$. We say $f$ is $D\p k$ (resp. $C\p k$) if it is so at every point in $U$. Also, $f$ is $C\p{k,\ell}$ (resp. $C\p{k,\ell}_L$) if it is $C\p k$ and $f\p{(\ell)}$ is locally Lipschitz continuous (resp. $L$-Lipschitz continuous). If $f$ is $D\p k$ at $\overline{x}$, then $f\p {(k)}$ is a symmetric multilinear map \cite[Proposition C.16]{gilbert2021fragments}. When evaluated on the diagonal, we use the shorthand $f\p{(k)}(x)v\p k = f\p{(k)}(x)(v,\hdots,v)$. 

If $f:\R\p n\to \R\p m$ is $D\p 1$, then we identify $f'$ with the Jacobian $(\partial f_i/\partial x_j)_{i,j}$.
When $f:\R\p n\to \R$ is $D\p 1$ at $\overline{x}$, then there exists a unique vector $\nabla f(\overline{x})\in \R\p n$ such that 
$f'(\overline{x})(v) = \langle \nabla f(\overline{x}),v\rangle$ by the Riesz-Fr\'echet representation theorem. Likewise, there exists a unique matrix $\nabla\p 2 f(\overline{x})\in\R\p{n\times n}$ such that $f''(\overline{x})(v_1,v_2) = \langle \nabla\p 2 f(\overline{x})v_1,v_2\rangle
$ for all $v_1,v_2\in \R\p n$. Given $v_2,\hdots,v_n\in \mathbb{R}^n$, we let $\nabla^k f(\overline{x})(v_2,\hdots,v_n) \in \mathbb{R}^n$ denote the unique vector such that $f^{(k)}(\overline{x})(v_1,v_2,\hdots,v_n) = \langle v_1 , \nabla^k f(\overline{x})(v_2,\hdots,v_n) \rangle$ for all $v_1 \in \mathbb{R}^n$.
When $f:\mathbb{R}^n\rightarrow \R$ is $D^2$ at $\overline{x}$, $f''(\overline{x})$ is a symmetric bilinear form and
\begin{equation*}
    \|f''(\overline{x})\| = \max_{|v|=1} |\langle \nabla^2 f(\overline{x})v , v \rangle| = \max_{|v|=1} |\nabla^2 f(\overline{x}) v| = \|\nabla^2 f(\overline{x})\|_2 = \rho(\nabla^2 f(\overline{x})).
\end{equation*}
If $\overline{x}$ is a local minimum of $f$, then $\|f''(\overline{x})\|= \lambda_1(\nabla^2 f(\overline{x}))$ since $\nabla^2 f(\overline{x})$ is positive semidefinite.

\subsection{Differential geometry}
\label{subsec:Differential geometry}
    An action of a group $G$ with identity $e$ on a set $M$ \cite[p. 161]{lee2012smooth} is a map $\theta:G \times M \rightarrow M$ such that 
\begin{enumerate}[label=\rm{(\roman{*})}]
    \item $\forall g,h\in G, ~ \forall x\in M, ~ \theta(g,\theta (h,x)) = \theta(gh,x)$,
    \item $\forall x\in M, ~ \theta(e,x) = x$.
\end{enumerate}
When an action exists, $M$ is referred to as a $G$-space.
    It is homogeneous if for all $x,y\in M$, there exists $g\in G$ such that $\theta(g,x)=y$. A function $f:M\rightarrow N$ between sets $M$ and $N$ is invariant under an action of a group $G$ on $M$ if $f(\theta(g,x)) = f(x)$ for all $g\in G$ and $x\in M$.
    
    A Lie group $G$ is a smooth manifold and a group whose operations are smooth. A Lie group $G$ acts smoothly on a smooth manifold $M$ if there exists a smooth action $\theta:G \times M \rightarrow M$. A Lie subgroup of a Lie group $G$ is a subgroup of $G$ is endowed with a topology and smooth structure making into a Lie group and an immersed submanifold.
    Topologically closed subgroups of Lie groups are Lie subgroups by the closed subgroup theorem \cite[Theorem 20.12]{lee2012smooth}. Let $\fg$ denote the Lie algebra of a Lie group $G$, which we identify with its tangent space at $e$.
    
    Let $I_n$ denote the identity matrix of order $n$. The set of invertible matrices with real coefficients of order $n$, denoted  $\GL(n)$, is a Lie group. The natural action of a Lie subgroup $G$ of $\GL(n)$ on $\R\p n$ is defined by the matrix vector multiplication $G\times \R\p n \ni (g,x)\mapsto gx\in \R\p n$. The orthogonal group $\O(n) = \{Q\in\R\p{n\times n}: Q\p T Q = I_n\}$ is a Lie subgroup of $\GL(n)$. 
    
    Given a Lipschitz continuous function $F:\R\p n\to\R\p n$ and $x_0\in\R\p n$, consider the ODE
    \begin{equation*}
        \left\{\begin{array}{ccc}
             \dot{x} & = & F(x)  \\
             x(0) & = & x_0.
        \end{array}\right.
    \end{equation*}
Suppose it has a unique global solution $x:\R \to \R\p n$ for every initial point $x_0$. Then one can define the global flow $\theta: \R \times \R\p n \to \R\p n$ by $\theta(t,x_0) = x(t)$. If $F$ is $C\p k$, then $\theta$ is a $C\p k$ smooth action of $\R$ on $\R\p n$. In that case, $\theta_t = \theta(t,\cdot)$ is a $C\p k$ diffeomorphism with inverse $\theta_{-t}$ for any $t\in\R$.

\subsection{Variational analysis}
\label{subsec:Variational analysis}

Given $X\subseteq \R\p n$ and $f:X\to \overline{\R}$, a point $\overline{x} \in X$ where $f(\overline{x})$ is finite is a local minimum (resp. strict local minimum) of $f$ if there exists a neighborhood $U$ of $\overline{x}$ in $X$ such that $f(\overline{x})\leq f(x)$ (resp. $f(\overline{x}) < f(x)$) for all $x \in U\setminus\{\overline{x}\}$. Let $$\arg\loc\min_X f = \{ \overline{x} \in X: \overline{x}~ \text{is a local minimum of}~f\}.$$ In amounts to an abuse of notation, we will alternatively write this as $\arg\loc\min \{ f(x) : x \in X\}$.
A point $\overline{x} \in X$ where $f(\overline{x})$ is finite is a global minimum of $f$ if there exists a neighborhood $U$ of $\overline{x}$ in $\R\p n$ such that $f(\overline{x})\leq f(x)$ for all $x \in \R\p n$. Accordingly,
$\min_X f = \min\{ f(x) : x\in X\}$ and $$\arg\min_X f = \{ x \in X : f(x) = \min_X f\},$$ which we will also denote by
$\arg\min \{ f(x): x\in X\}$. A local minimum is spurious if it is not a global minimum.

It will be convenient to use the generalization of local optimality from points to sets proposed in \cite{joszneurips2018,josz2023certifying}. A nonempty set $S\subseteq \R\p n$ is a local minimum (resp. strict) of $f:\R\p n\to\R$ if there exists a neighborhood $U$ of $S$ such that $f(x)\leq f(y)$ (resp. $f(x)<f(y)$) for all $x\in S$ and $y\in U\setminus S$. In constrast to \cite{josz2023certifying}, we do not assume $S$ to be closed.

Given $x \in \mathbb{R}^n$ and $S \subseteq \R\p n$, let 
    $$d(x,S) = \inf \{|x-y|: y \in S\} ~~~ \text{and} ~~~
    P_S(x) = \arg\min \{ |x-y| : y \in S\}.$$ 
Given $f:\R\p n \to \overline{\R}$ and $\ell\in \R$, let $$[f = \ell] = \{x\in \R\p n : f(x) = \ell\}$$ and define other expressions like $[f \leq \ell]$ similarly. Let $\dom f = \{ x\in \mathbb{R}^n : f(x) < \infty\}$, 
$$\gph f=\{ (x,t)\in \mathbb{R}^n\times \R : f(x)=t\}, ~~~\text{and}~~~ \epi f=\{ (x,t)\in \mathbb{R}^n\times \R : f(x)\leq t\}.$$
The indicator and the support function of a set $C \subseteq \mathbb{R}^n$ are respectively defined by 
\begin{equation*}
    \forall x\in \R\p n ,~ \delta_C(x) = \left\{
    \begin{array}{cl}
        0 & \text{if} ~ x \in C, \\
        \infty & \text{if} ~ x \notin C.
    \end{array}
    \right.~~~\text{and}~~~  \forall v\in \R\p n, ~ \sigma_C(v) = \sup_{w \in C} \langle v,w \rangle.
\end{equation*}
Given $C \subseteq \mathbb{R}^n$ and $\overline{x} \in C$, the tangent cone \cite[Definition 6.1]{rockafellar2009variational} is defined by
\begin{gather*}
     T_C(\overline x)=\{v\in \R^n:~\exists  x_k\xrightarrow[C]{} \overline{x}~,~\tau_k\searrow 0: (x^k-\overline x)/\tau^k\to v\}
\end{gather*}
where $x_k \xrightarrow[C]{} \overline{x}$ is a shorthand for $x_k \rightarrow \overline{x}$ and $x_k \in C$.
A function $f:\mathbb{R}^n\rightarrow\overline{\mathbb{R}}$ is lower semicontinuous at $\overline{x} \in \dom f$ if $\liminf_{x\rightarrow \overline{x}} f(x) \geq   f(\overline{x})$ \cite[Definition 1.5]{rockafellar2009variational}. It is lower semicontinuous if it is so at every point in its domain.
The regular normal cone and normal cone \cite[Definition 6.3]{rockafellar2009variational} are defined by
\begin{gather*}
    \widehat{N}_C (\overline{x}) = \{ v \in \mathbb{R}^n : \langle v , x - \overline{x}\rangle \leq  o(|x-\overline{x}|)~\text{for}~x\in C~\text{near}~\overline{x} \}, \\ 
    N_C(\overline{x}) = \{ v \in \mathbb{R}^n : \exists x_k \xrightarrow[C]{} \overline{x}~\text{and}~ \exists v_k \rightarrow v~\text{with}~v_k \in \widehat{N}_C (x_k) \},
\end{gather*}
where $x_k \xrightarrow[C]{} \overline{x}$ is a shorthand for $x_k \rightarrow \overline{x}$ and $x_k \in C$. Explicitly, the $o$ means that 
\begin{equation*}
    \limsup_{\scriptsize \begin{array}{c}x \xrightarrow[C]{} \overline{x}\\ x\neq \overline{x}\end{array}} \frac{\langle v , x - \overline{x}\rangle}{|x-\overline{x}|} \leqslant  0.
\end{equation*}
A set $C \subseteq \mathbb{R}^n$ is regular \cite[Definition 6.4]{rockafellar2009variational} at one of its points $\overline{x}$ if it is locally closed and $\widehat{N}_C(\overline{x}) = N_C(\overline{x})$. 
The polar set of $C\subseteq \R^n$ \cite[Definition 6.22]{bauschke2017convex} is defined by 
$$   C^* =\{v\in \R^n:~\forall w\in C,~\langle v,w\rangle\leq  0\}.$$
By \cite[Theorem 6.28]{rockafellar2009variational}, the relationship $ \widehat{N}_C (\overline{x})=T_C(\overline{x})^*$ holds.

Given $f:\mathbb{R}^n\rightarrow \overline{\mathbb{R}}$ and a point $\overline{x}\in\mathbb{R}^n$ where $f(\overline{x})$ is finite, the one-sided directional derivative (if it exists in $\R\cup\{\pm\infty\}$) and the subderivative for $w\in \R\p n$ are respectively given by
$$f'(\overline x;w) = \lim_{\tau \searrow 0} \frac{f(\overline x+\tau w)-f(\overline x)}{\tau} ~~~\text{and}~~~
df(\overline{x})(w) = \liminf_{\scriptsize \begin{array}{c}
     \tau \searrow 0  \\
     w' \to w 
\end{array}} \frac{f(\overline x+\tau w')-f(\overline x)}{\tau}. 
$$
According to \cite[Definition 7.20]{rockafellar2009variational}, $f$ is semidifferential at $\overline{x}$ for $w\in\R\p n$ if
$$\lim_{\scriptsize \begin{array}{c}
     \tau \searrow 0  \\
     w' \to w 
\end{array}} \frac{f(\overline x+\tau w')-f(\overline x)}{\tau}\in \R \cup \{\pm \infty\}.$$
If this holds for all $w\in \R\p n$, then $f$ is semidifferentiable at $\overline{x}$.
Given $f:\mathbb{R}^n\rightarrow \overline{\mathbb{R}}$ and a point $\overline{x}\in\mathbb{R}^n$ where $f(\overline{x})$ is finite, the regular subdifferential, subdifferential, horizon subdifferential \cite[Definition 8.3]{rockafellar2009variational}, and Clarke subdifferential of $f$ at $\overline{x}$ \cite[Definition 4.1]{drusvyatskiy2015curves} are respectively given by
\begin{gather*}
    \widehat{\partial} f (\overline{x}) = \{ v \in \mathbb{R}^n : f(x) \geq   f(\overline{x}) + \langle v , x - \overline{x} \rangle + o(|x-\overline{x}|) ~\text{near}~ \overline{x} \}, \\
    \partial f(\overline{x}) = \{ v \in \mathbb{R}^n : \exists (x_k,v_k)\in \gph\hspace*{.5mm}\widehat{\partial} f: (x_k, f(x_k), v_k)\rightarrow(\overline{x}, f(\overline{x}), v) \}, \\[1mm]
    \partial^\infty f(\overline{x}) = \{ v \in \mathbb{R}^n : \exists (x_k,v_k)\in \gph\hspace*{.5mm}\widehat{\partial} f: \exists \tau_k \searrow 0: (x_k, f(x_k), \tau_kv_k)\rightarrow(\overline{x}, f(\overline{x}), v) \}, \\[2mm]
    \overline{\partial} f(\overline{x}) = \overline{\mathrm{co}} [\partial f(\overline{x}) + \partial^\infty f(\overline{x})],
\end{gather*}
where ${\mathrm{co}}$ denotes the convex hull, and $\overline{\mathrm{co}}$ its closure.
The $o$ means that
\begin{equation*}
    \liminf_{\scriptsize \begin{array}{c}x \rightarrow \overline{x} \\ x\neq \overline{x}\end{array}} \frac{f(x) - f(\overline{x}) - \langle v , x - \overline{x} \rangle}{|x-\overline{x}|} \geq   0.
\end{equation*} 
A point $x\in \R\p n$ is critical (resp. Clarke critical) if $0\in \partial f(x)$ (resp. $0\in \overline{\partial} f(x)$). A function $f:\mathbb{R}^n \rightarrow \overline{\mathbb{R}}$ is regular \cite[Definition 7.25]{rockafellar2009variational} at $\overline{x}$ if $f(\overline{x})$ is finite and $\epi f$ is regular at $(\overline{x},f(\overline{x}))$ as a subset of $\mathbb{R}^{n+1}$. The Lipschitz modulus of a function $f:\R^n\to\R$ is defined by \cite[p. 354]{rockafellar2009variational}
\begin{equation*}
    \lip f(\overline{x}) = \limsup_{\scriptsize \begin{array}{c}x,y \rightarrow \overline{x}\\ x\neq y\end{array}} \frac{|f(x)-f(y)|}{|x-y|}.
\end{equation*}
If $f$ is Lipschitz continuous near $\overline{x}$, i.e., $\lip f(\overline{x}) < \infty$, then let
\begin{equation*}
    \overline{\nabla} f(\overline{x}) = \{ v \in \mathbb{R}^n : \exists x_k \xrightarrow[D]{} \overline{x} ~\text{with}~ \nabla f(x_k) \rightarrow v\}
\end{equation*}
where $D$ are the differentiable points of $f$. In what amounts to a slight abuse of notation, we may replace $D$ by some $D' \subseteq D$ such that $D \setminus D'$ has zero Lebesgue measure. Still assuming $\lip f(\overline{x}) < \infty$, by \cite[Theorem 9.13]{rockafellar2009variational} $\partial f(\overline{x})$ is nonempty and compact, and $\lip f(\overline{x}) = \max \{|v| : v \in \partial f(\overline{x})\}$. By \cite[Theorem 9.61]{rockafellar2009variational}, $\overline{\nabla} f(\overline{x})$ is a nonempty compact subset of $\partial f(\overline{x})$ and $\mathrm{co} \overline{\nabla} f(\overline{x}) = \mathrm{co}\hspace*{.3mm} \partial f(\overline{x}) = \overline{\partial} f(\overline{x})$. 
Thus $\arg\max \{ |v|: v\in \partial f(\overline{x})\}= \arg\max \{ |v|: v\in \overline{\nabla} f(\overline{x})\}$, using the following fact.

\begin{fact}
\label{fact:hull}
    For any set $X \subseteq\R\p n$, $\arg\max \{ |x| : x \in X \} = \arg\max \{ |x| : x \in \co X \}$.
\end{fact}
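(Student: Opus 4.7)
The plan is to prove this in two steps: first show that $\sup\{|x| : x \in X\}$ and $\sup\{|x| : x \in \co X\}$ coincide, then deduce equality of the argmax sets. For the suprema, the inclusion $X \subseteq \co X$ gives $\sup_X|\cdot| \leq \sup_{\co X}|\cdot|$. Conversely, any $y \in \co X$ admits a finite convex representation $y = \sum_{i=1}^k \lambda_i x_i$ with $x_i \in X$, $\lambda_i \geq 0$, $\sum_i \lambda_i = 1$, and the triangle inequality gives $|y| \leq \sum_i \lambda_i |x_i| \leq \sup_X|\cdot|$. Call the common value $M$; if $M = \infty$ both argmax sets are empty and the identity holds trivially, so I may assume $M < \infty$.

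The inclusion $\arg\max\{|x| : x \in X\} \subseteq \arg\max\{|x| : x \in \co X\}$ is immediate from $X \subseteq \co X$ and the equality of suprema. For the reverse, I take $y$ in the right-hand argmax, drop any zero coefficients so that $y = \sum_{i=1}^k \lambda_i x_i$ with $\lambda_i > 0$ and $x_i \in X$, and expand
\[
|y|^2 \;=\; \sum_{i,j} \lambda_i \lambda_j \langle x_i, x_j \rangle.
\]
Bounding each inner product by Cauchy--Schwarz, $\langle x_i, x_j \rangle \leq |x_i|\, |x_j| \leq M^2$, and using $\sum_{i,j} \lambda_i \lambda_j = 1$, I get $|y|^2 \leq M^2$, with equality forcing $\langle x_i, x_j \rangle = |x_i|\, |x_j| = M^2$ for every $i, j$. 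The first equality says $x_i$ and $x_j$ are non-negatively proportional, and the second then forces $|x_i| = |x_j| = M$ and hence $x_i = x_j$. Consequently $y = x_1 \in X$, so $y \in \arg\max\{|x| : x \in X\}$.

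The only step with any bite is the reverse inclusion: a priori, forming convex combinations could produce norm-maximizing points outside $X$. Ruling this out is where strict convexity of the Euclidean norm enters, which I package above as the equality case of Cauchy--Schwarz applied entry-wise in the double sum.
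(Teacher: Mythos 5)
Your proof is correct, and it takes a genuinely different route for the nontrivial inclusion $\arg\max_{\co X} \subseteq \arg\max_X$. The paper peels off a single coefficient: it rewrites $\overline{x} = (1-t_j)\sum_{i\neq j}\tfrac{t_i}{1-t_j}x_i + t_j x_j$ and invokes strict convexity of $|\cdot|^2$ on this two-point combination to force a strict inequality, a contradiction if $\overline{x} \neq x_j$. A small wrinkle there is that the strict inequality also requires $t_j > 0$, which the paper leaves implicit (one should drop zero weights first). You instead expand $|y|^2$ as the quadratic form $\sum_{i,j}\lambda_i\lambda_j\langle x_i,x_j\rangle$, bound it by Cauchy--Schwarz term by term, and trace the equality case through the whole double sum, having explicitly discarded zero coefficients up front. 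Both arguments ultimately rest on the same geometric fact --- the strict convexity of the Euclidean ball, equivalently the equality case of Cauchy--Schwarz --- but your expansion treats all indices symmetrically, so it avoids the paper's peeling step and the $t_j > 0$ bookkeeping. Your $M=\infty$ disposal and the $M=0$ degenerate case ($x_i = 0$ for all $i$) are both handled consistently by the argument as written.
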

\begin{proof}
Let $\overline{x}\in \arg\max \{ |x| : x \in X \}$ and $x \in \co X$.
There exist finitely many $t_i \geq 0$ with $\sum_i t_i = 1$ and $x_i \in X$ such that $x = \sum_i t_i x_i$. Thus $|x| = \left|\sum_i t_i x_i\right|\leq \sum_i t_i |x_i| \leq \sum_i t_i |\overline{x}| = |\overline{x}|$ and $\overline{x} \in \arg\max \{ |x| : x \in \co X \}$. Conversely, let $\overline{x} \in \arg\max \{ |x| : x \in \co X \}$. There exist finitely many $t_i \geq 0$ with $\sum_i t_i = 1$ and $x_i \in X \subseteq \co X$ such that $\overline{x} = \sum_i t_i x_i$. If $\overline{x} \neq x_j$ for some $j$, then $t_j < 1$ and one obtains the contradiction
\begin{align*}
    |\overline{x}|\p 2 & = \left|(1-t_j)\sum_{i\neq j} \frac{t_i}{1-t_j} x_i + t_j x_j\right|\p 2 < (1-t_j)\left|\sum_{i\neq j} \frac{t_i}{1-t_j} x_i\right|\p 2 + t_j |x_j|\p 2 \\
    & \leq (1-t_j) \sum_{i\neq j} \frac{t_i}{1-t_j} |x_i|\p 2 + t_j |x_j|\p 2 = \sum_i t_i |x_i|\p 2 \leq \sum_i t_i |\overline{x}|\p 2 = |\overline{x}|\p 2
\end{align*}
using the strict convexity of $|\cdot|\p 2$. Thus $\overline x = x_j \in X$ for some $j$.
\end{proof}


\subsection{O-minimal structures}
\label{subsec:O-minimal structures}

O-minimal structures (short for order-minimal) were originally considered by van den Dries, Pillay, and Steinhorn \cite{van1984remarks,pillay1986definable}. They are founded on the observation that many properties of semi-algebraic sets can be deduced from a few simple axioms \cite{van1998tame}. Recall that a subset $A$ of $\mathbb{R}^n$ is semi-algebraic \cite{bochnak2013real} if it is a finite union of basic semi-algebraic sets, which are of the form 
    \begin{equation*}
        \{ x \in \mathbb{R}^n : f_1(x) > 0 , \hdots , f_p(x) > 0, f_{p+1}(x) = 0, \hdots , f_q(x) = 0\}
    \end{equation*}
    where $f_1,\hdots,f_q$ are polynomials with real coefficients. We adopt \cite[Definition p. 503-506]{van1996geometric} below.
    
\begin{definition}
\label{def:o-minimal}
An o-minimal structure on the real field is a sequence $S = (S_k)_{k \in \mathbb{N}}$ such that for all $k \in \mathbb{N}$:\vspace{-2mm}
\begin{enumerate}
    \item $S_k$ is a boolean algebra of subsets of $\mathbb{R}^k$, with $\mathbb{R}^k \in S_k$;\vspace{-2mm}
    \item $S_k$ contains the diagonal $\{(x_1,\hdots,x_k) \in \mathbb{R}^k : x_i = x_j\}$ for $1\leqslant  i<j \leqslant  k$;\vspace{-2mm}
    \item If $A\in S_k$, then $A\times \mathbb{R}$ and $\mathbb{R}\times A$ belong to $S_{k+1}$;\vspace{-2mm}
    \item If $A \in S_{k+1}$ and $\pi:\mathbb{R}^{k+1}\rightarrow\mathbb{R}^k$ is the projection onto the first $k$ coordinates, then $\pi(A) \in S_k$;\vspace{-2mm}
    \item $S_3$ contains the graphs of addition and multiplication;\vspace{-2mm}
    \item $S_1$ consists exactly of the finite unions of open intervals and singletons. 
\end{enumerate}
\end{definition}

A subset $A$ of $\mathbb{R}^n$ is definable in an o-minimal structure $(S_k)_{k\in\mathbb{N}}$ if $A \in S_k$ for some $k\in\mathbb N$. A function $f:\mathbb{R}^n\rightarrow\overline{\mathbb{R}}$ is definable in an o-minimal structure if $\gph f$ is definable in that structure. Throughout this paper, we fix an arbitrary o-minimal structure $(S_k)_{k\in\mathbb{N}}$.

A key property of univariate definable functions is that they satisfy the monotonicity theorem \cite[4.1]{van1996geometric}. It states that on bounded open intervals, for any $p\in \mathbb{N}$ there exist finitely many open subintervals where the function is $C^p$ and either constant or strictly monotone. The extension of the monotonicity theorem to multivariate functions is the cell decomposition theorem \cite[4.2]{van1996geometric}, which is used to prove the definable Morse-Sard theorem \cite[Corollary 9]{bolte2007clarke}. It asserts that that lower semi-continuous definable functions have finitely many Clarke critical values.

\section{Basic aspects}
\label{sec:Basic aspects}

We investigate two dual viewpoints on flatness. To each one naturally corresponds an optimal curve emanating from the point of interest. We study its properties in preparation of future sections and develop calculus rules.

\subsection{Definition of flatness}

Below are two ways to measure the variation of a function around a point.
\begin{definition}
\label{def:cfof}
    Given $f:\R\p n \to \R$, let $\cf,\of:\R\p n \times \R_+ \to \overline{\R}$ be defined by
$$\cf(x,r) = \sup_{\overline{B}_r(x)} |f-f(x)| ~~~\text{and} ~~~ \of(x,\ell) = d(x,[|f-f(x)|\geq\ell]).$$
\end{definition}

The following definition is the central tenet of this paper.

\begin{definition}
    \label{def:flat}
    Consider the binary relation on $\R\p n$ defined by
$$x \preceq y ~~~ \Longleftrightarrow ~~~  \exists \overline{r}>0: ~ \forall r \in (0,\overline{r}], ~~~ \cf(x,r) \leq \cf(y,r).$$ 
We say $x$ is flatter than $y$, or $y$ is sharper than $x$ if $x\prec y$. We say $\overline{x}$ is flat (resp. strictly flat) if there exists a neighborhood $U$ of $\overline{x}$ in $[f=f(\overline{x})]$ such that $\overline{x}\preceq x$ (resp. $\overline{x}\prec x$) for all $x \in U \setminus\{\overline{x}\}$. We say $\overline{x}$ is globally flat if $\overline{x}\preceq x$ for all $x \in [f=f(\overline{x})]$.
\end{definition}

A function $f:\R\p n\to\R$ is constant near $x\in\R\p n$ if it is constant on a neighborhood of $x$. Otherwise, $f$ is nonconstant near $x$, which happens iff $\cf(x,r)>0$ for all $r>0$ iff $x\notin \inte [f=f(x)]$. Given $x$, $f$ is constant iff $\of(x,\ell)=\infty$ for all $\ell>0$, and $f$ is continuous at $x$ iff $\of(x,\ell)>0$ for all $\ell>0$.
A set $X\subseteq \R\p n$ is nowhere dense if its closure has empty interior. If $f$ is continuous and $\alpha\in \R$, then $f$ is nonconstant near every point in $[f=\alpha]$ iff $[f=\alpha]$ is nowhere dense.

A function $\varphi:S\to\overline\R$ where $S\subseteq \R$ is increasing (resp. strictly increasing) if $\varphi(s)\leq \varphi(t)$ (resp. $\varphi(s)< \varphi(t)$) for all $s,t\in S$ such that $s<t$. It is monotone (resp. strictly monotone) if either $\varphi$ or $-\varphi$ is increasing (resp. strictly increasing). Given $x\in\R\p n$, let $\cf_x = \cf(x,\cdot)$ and $\of_x = \of(x,\cdot)$, both of which are increasing.


\begin{fact}
\label{fact:total}
    $\preceq$ is a preorder. If $f$ is definable, then it is a total preorder and
    $$\forall x,y\in \R\p n, ~~~ x \prec y ~~\Longleftrightarrow ~~ \exists \overline{r}>0: ~ \forall r \in (0,\overline{r}], ~~~ \cf(x,r) < \cf(y,r).$$
\end{fact}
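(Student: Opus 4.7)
The plan is to verify reflexivity and transitivity directly, then invoke the univariate monotonicity theorem (available since $f$ is definable) to handle totality and the strict refinement. Reflexivity of $\preceq$ is immediate from $\cf_x(r) \leq \cf_x(r)$. Transitivity follows by intersecting the witnessing intervals of $x \preceq y$ and $y \preceq z$: setting $\overline r := \min(\overline r_1, \overline r_2)$, the inequalities chain on $(0, \overline r]$.

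For totality under definability, the first step is to observe that $\cf : \R^n \times \R_+ \to \overline \R$ is itself definable. This holds because its epigraph is obtained from $\gph f$ by universal quantification over $\overline B_r(x)$ (realized in the o-minimal toolkit as negation-projection-negation), all operations that preserve definability. Then for any fixed $x, y \in \R^n$, the univariate function $g(r) := \cf_x(r) - \cf_y(r)$ is definable on $(0, \infty)$, and the monotonicity theorem (with $p = 0$) produces a finite partition whose leftmost piece is an open interval $(0, a)$ on which $g$ is continuous and either constant or strictly monotone. In each subcase the sign of $g$ is eventually constant on a subinterval $(0, \overline r]$: the constant case is immediate, and a strictly monotone $g$ has a one-sided limit at $0^+$ in $\overline \R$ that determines its sign nearby. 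This yields either $x \preceq y$ or $y \preceq x$, proving totality.

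The strict characterization is handled last. The $(\Leftarrow)$ direction is a short contradiction: a strict inequality $\cf_x < \cf_y$ on $(0, \overline r]$ gives $x \preceq y$, while any hypothetical witness $\overline r'$ for $y \preceq x$ would produce $\cf_x < \cf_y \leq \cf_x$ on $(0, \min(\overline r, \overline r')]$. For $(\Rightarrow)$, I assume $x \prec y$ so that $g \geq 0$ on some $(0, \overline r_1]$ while $y \not\preceq x$, and reapply the monotonicity theorem to $g$ on a piece $(0, a) \subseteq (0, \overline r_1]$. The identically-zero case is excluded by $y \not\preceq x$; the positive-constant case gives $g > 0$ directly; and in the strictly monotone subcases, if $g$ were to vanish at some interior point of $(0, a)$, strict monotonicity would force $g < 0$ on one side of that point, contradicting $g \geq 0$ on $(0, \overline r_1]$. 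The main subtlety I anticipate is the careful handling of the $\overline \R$-valued nature of $g$ when $f$ is unbounded on balls, but this is routine within the o-minimal setting and does not affect the structure of the argument.
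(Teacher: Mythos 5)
Your argument follows the paper's route almost exactly: observe that $\cf$ inherits definability from $f$, form the univariate difference, and invoke the monotonicity theorem to split into constant vs. strictly monotone pieces near $0$, from which totality and the strict refinement drop out. Two small issues are worth flagging, though neither changes the viability of the approach.

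First, there is a sign slip that propagates through your $(\Rightarrow)$ paragraph. You define $g(r) = \cf_x(r) - \cf_y(r)$, so $x\preceq y$ gives $g\leq 0$ on some $(0,\overline{r}_1]$, not $g\geq 0$; consequently the case you should be discharging directly is the \emph{negative}-constant case, and the vanishing-at-an-interior-point argument should conclude $g>0$ on one side (contradicting $g\leq 0$), not $g<0$. The logic is internally consistent with a flipped sign convention, but as written it does not match your own definition of $g$. The paper avoids this by working with $\varphi = \cf_y - \cf_x$ and reading off the sign of the one-sided limit at $0^+$.

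Second, you defer the $\overline{\R}$-valued issue to a closing remark, but it is not quite cosmetic: if $\cf_x(r)=\cf_y(r)=\infty$ for all $r>0$, then $g$ is nowhere defined as a real number, and the monotonicity theorem has nothing to act on. The paper handles this upfront, using that $\cf_x$ is increasing so is either identically $\infty$ or eventually finite, and disposes of the three mixed infinite cases directly before invoking monotonicity on the remaining finite-valued case. Your proof should do the same before forming $g$, rather than assuming the fix is routine after the fact. With these two repairs, your argument is equivalent to the paper's.
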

\begin{proof}
    $\preceq$ is reflexive and transitive. Let $x,y\in\R\p n$. Since $\cf_x$ is increasing, either $\cf_x(r)=\infty$ for all $r>0$, or $\cf_x(r)<\infty$ for all $r\in(0,\overline{r})$. Likewise at $y$. In the infinite cases, $x$ and $y$ are related since either $\cf(x,r) = \cf(y,r)=\infty$ for all $r>0$, $\cf(x,r) < \cf(y,r)=\infty$ for all $r\in(0,\overline{r})$, or $\cf(y,r) < \cf(x,r)=\infty$ for all $r\in(0,\overline{r})$. 
    Otherwise, assume $\cf_x(r),\cf_y(r)<\infty$ for all $r\in(0,\overline{r})$. Since $f$ is definable, so is $\cf$. Consider the function $\varphi:(0,\overline{r})\to\R$ defined by $\varphi(r) = \cf(y,r)-\cf(x,r)$. By the monotonicity theorem \cite[4.1]{van1998tame}, $\varphi$ is either constant or strictly monotone on $(0,\overline{r})$, after possibly reducing $\overline{r}$. If it is constant, then there exists $c\in \R$ such that $\cf(x,r) = \cf(y,r)+c$ for all $r\in(0,\overline{r})$. Otherwise, without loss of generality, we may assume that it is strictly increasing. If $\lim_{r\searrow 0} \varphi(r)\geqslant 0$, then $\cf(x,r) < \cf(y,r)$ for all $r\in(0,\overline{r})$. Otherwise, there exists $\widehat{r}\in (0,\overline{r}]$ such that $\cf(y,r) < \cf(x,r)$ for all $r\in(0,\widehat{r})$. In all cases, $x$ and $y$ are related. The equivalence follows easily from the above.
\end{proof}

We now gather several useful facts when $f$ is continuous (i.e., Facts \ref{fact:sublevel}-\ref{fact:lsc}).

\begin{fact}
\label{fact:sublevel}
    $\forall (x,\ell) \in \R\p n \times \R_+,~ P_{[|f-f(x)|\geq\ell]}(x) = P_{[|f-f(x)|=\ell]}(x)~\land ~ \of(x,\ell)=d(x,[|f-f(x)|= \ell])$.
\end{fact}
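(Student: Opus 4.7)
The plan is to set $A := [|f-f(x)|\geq \ell]$ and $B := [|f-f(x)|=\ell]$, note that $B\subseteq A$ and that both sets are closed by continuity of $f$, and then to establish the single assertion that every closest point of $A$ to $x$ lies in $B$, i.e.\ $P_A(x)\subseteq B$. Once this is in hand, both the distance identity and the projection-set identity drop out almost for free: $d(x,A)\leq d(x,B)$ always (as $B\subseteq A$), and $P_A(x)\subseteq B$ with any witness $y\in P_A(x)$ delivers the reverse inequality. The degenerate case $A=\emptyset$ is trivial, since then $B=\emptyset$ too, both distances are $\infty$, and both projection sets are empty; otherwise closedness of $A$ in $\R\p n$ ensures $P_A(x)\neq \emptyset$.

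The core step is a one-dimensional intermediate value argument on the segment from $x$ to an arbitrary $y\in P_A(x)$. Introduce $\varphi:[0,1]\to\R_+$ by $\varphi(t)=|f((1-t)x+ty)-f(x)|$; continuity of $f$ makes $\varphi$ continuous, and $\varphi(0)=0$ while $\varphi(1)\geq \ell$. Suppose, toward a contradiction, that $\varphi(1)>\ell$; then $y\neq x$, and by the intermediate value theorem there exists $t^*\in(0,1)$ with $\varphi(t^*)=\ell$. The point $z:=(1-t^*)x+t^*y$ then lies in $B\subseteq A$ and satisfies $|x-z|=t^*|x-y|<|x-y|$, contradicting $y\in P_A(x)$. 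Hence $\varphi(1)=\ell$, which is precisely $y\in B$.

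Wrapping up is then routine. From $P_A(x)\subseteq B$, any $y\in P_A(x)$ yields $d(x,B)\leq |x-y|=d(x,A)$, and combined with $B\subseteq A$ this gives $\of(x,\ell)=d(x,A)=d(x,B)=d(x,[|f-f(x)|=\ell])$. The same $y$ then witnesses $y\in P_B(x)$, so $P_A(x)\subseteq P_B(x)$; conversely any $y\in P_B(x)$ satisfies $y\in B\subseteq A$ and $|x-y|=d(x,B)=d(x,A)$, hence $y\in P_A(x)$. No real obstacle is anticipated; the only minor care is the degenerate case $\ell=0$, where $A=\R\p n$ forces the only candidate $y=x$ to lie in $B$ directly, so the IVT step is not even needed.
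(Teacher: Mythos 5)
Your proof is correct and rests on the same core idea as the paper's: an intermediate value argument along the segment from $x$ to a projection point rules out $|f(y)-f(x)|>\ell$. The main organizational difference is that you derive both conclusions from the single inclusion $P_A(x)\subseteq B$ (using closedness of $A=[|f-f(x)|\geq\ell]$ to guarantee a witness exists when $A\neq\emptyset$), while the paper runs the IVT argument a second time with a minimizing sequence $y_k$ to prove the distance equality directly; your version is a bit more economical and is more explicit about the degenerate cases $A=\emptyset$ and $\ell=0$, which the paper passes over silently.
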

\begin{proof}
    Let $y \in P_{[|f-f(x)|\geq\ell]}(x)$. If $|f(y)-f(x)| > \ell$, then by intermediary value theorem there exists $z\in (x,y)$ such that $|f(z)-f(x)| = \ell$. Since $|x-z|<|x-y|$ and $z \in [|f-f(x)|\geq\ell]$, this contradicts the optimality of $y$. Thus $|f(y)-f(x)| = \ell$ and $y \in P_{[|f-f(x)|=\ell]}(x)$. As for the second equality, $\of(x,\ell)\leq d(x,[|f-f(x)|= \ell])$ is obvious. Let $y_k\in\R\p n$ be such that $|y_k-x|\to \of(x,\ell)$ and $|f(y_k)-f(x)|\geq \ell$. By the intermediary value theorem, there exists $z_k\in[x,y_k]$ such that $|f(z_k)-f(x)|=\ell$. Hence $d(x,[|f-f(x)|= \ell])\leq |z_k-x|\leq |y_k-x|$ and so $d(x,[|f-f(x)|= \ell])\leq \of(x,\ell)$. 
\end{proof}

There exists a natural duality between $\cf$ and $\of$.
\begin{fact}
\label{fact:primal}
    $\forall (x,\ell)\in \R\p n \times \R\p +$, $\of(x,\ell) = \inf \{ r \geq 0: \cf(x,r) \geq \ell\}$.
\end{fact}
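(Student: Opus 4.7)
The plan is to prove the claimed duality by two easy inequalities, exploiting continuity of $f$ (which we may use since \cref{fact:primal} appears in the block of facts gathered under that assumption). Denote $B(x,\ell) := \inf \{r \geq 0 : \cf(x,r) \geq \ell\}$ and show separately that $\of(x,\ell) \leq B(x,\ell)$ and $B(x,\ell) \leq \of(x,\ell)$.

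For the inequality $B(x,\ell) \leq \of(x,\ell)$, I would unfold the definitions: for any $y \in \R^n$ with $|f(y)-f(x)| \geq \ell$, set $r = |y-x|$, so that $y \in \overline{B}_r(x)$ and hence $\cf(x,r) \geq |f(y)-f(x)| \geq \ell$. By definition of $B$ this gives $B(x,\ell) \leq r = |y-x|$, and taking the infimum over admissible $y$ yields the inequality. This direction does not use continuity.

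For the reverse inequality $\of(x,\ell) \leq B(x,\ell)$, it suffices to show that whenever $\cf(x,r) \geq \ell$ one has $\of(x,\ell) \leq r$. Here continuity of $f$ combined with compactness of $\overline{B}_r(x)$ forces the supremum defining $\cf(x,r)$ to be attained: pick a maximizing sequence $y_k \in \overline{B}_r(x)$ with $|f(y_k)-f(x)| \to \cf(x,r) \geq \ell$, extract a subsequential limit $y^\star \in \overline{B}_r(x)$, and use continuity to conclude $|f(y^\star)-f(x)| = \cf(x,r) \geq \ell$. Hence $y^\star \in [|f-f(x)|\geq \ell]$ and $\of(x,\ell) \leq |y^\star - x| \leq r$; passing to the infimum over such $r$ closes the proof.

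There is no real obstacle here; the only delicate point is that without continuity one would only get $\of(x,\ell^-) \leq B(x,\ell)$ and the equality at $\ell$ itself could fail because $\of_x$ need not be left-continuous. The compactness-plus-continuity argument above is exactly what bridges this gap, and it fits naturally into the continuity hypothesis already in force for \cref{fact:sublevel}--\cref{fact:lsc}.
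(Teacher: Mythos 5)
Your proof is correct and takes essentially the same route as the paper, which writes the argument as a single chain of definitional equalities; you simply split it into the two inequalities $B(x,\ell)\leq\of(x,\ell)$ and $\of(x,\ell)\leq B(x,\ell)$, with the second making explicit the compactness-plus-continuity step that the paper leaves implicit. Your closing observation about where continuity is genuinely needed matches the paper's remark after \cref{fact:dual}.
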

\begin{proof} 
    $\of(x,\ell) = d(x,[|f-f(x)|\geq\ell]) = \inf\{ r\geq 0: \exists y \in \overline{B}_r(x): |f(y)-f(x)|\geq\ell \} = \inf\{ r\geq 0: \sup_{y \in \overline{B}_r(x)} |f(y)-f(x)|\geq\ell \} = \inf \{ r \geq 0: \cf(x,r) \geq \ell\}$.
\end{proof}

\begin{fact}
\label{fact:dual}
    $\forall (x,r)\in \R\p n \times \R_+$, $\cf(x,r) = \sup \{ \ell \geq 0: \of(x,\ell) \leq r\}$.
\end{fact}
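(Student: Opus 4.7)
The plan is to derive \cref{fact:dual} from \cref{fact:primal} by first establishing the pointwise equivalence
$$\of(x,\ell) \leq r ~\Longleftrightarrow~ \cf(x,r) \geq \ell.$$
Once this is in hand, the set $\{\ell \geq 0 : \of(x,\ell) \leq r\}$ coincides with $\{\ell \geq 0 : \cf(x,r) \geq \ell\}$, whose supremum is manifestly $\cf(x,r)$ since $\cf(x,r)\geq 0$. This gives the claim in one line and reduces all the work to the equivalence.

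For the forward direction $\cf(x,r) \geq \ell \Rightarrow \of(x,\ell) \leq r$, I would simply note that $r$ then belongs to $\{r' \geq 0 : \cf(x,r') \geq \ell\}$, whose infimum is $\of(x,\ell)$ by \cref{fact:primal}, so $\of(x,\ell) \leq r$. The reverse direction is subtler: from $\of(x,\ell) \leq r$, \cref{fact:primal} supplies for each $\epsilon>0$ some $r_\epsilon \leq r+\epsilon$ with $\cf(x,r_\epsilon)\geq \ell$, and monotonicity of $\cf(x,\cdot)$ (immediate from $\overline{B}_{r'}(x)\subseteq\overline{B}_{r''}(x)$ when $r'\leq r''$) upgrades this to $\cf(x,r+\epsilon)\geq \ell$ for all $\epsilon>0$. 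Passing to the limit $\epsilon\searrow 0$ is the point where I need right-continuity of $\cf(x,\cdot)$ at $r$.

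The main obstacle is thus this right-continuity step. I plan to obtain it by a compactness argument: given $r_n \searrow r$, pick $y_n \in \overline{B}_{r_n}(x)$ with $|f(y_n)-f(x)| \geq \cf(x,r_n) - 1/n$; since $(y_n)$ is bounded, extract a convergent subsequence $y_{n_k} \to y^*$, and observe $y^* \in \overline{B}_r(x)$ because $|y_{n_k}-x|\leq r_{n_k}\to r$. Continuity of $f$ (which is in force throughout \cref{fact:sublevel}--\ref{fact:lsc}) then gives $\cf(x,r) \geq |f(y^*)-f(x)| \geq \lim_k \cf(x,r_{n_k})$, and combining this with the reverse bound $\cf(x,r) \leq \cf(x,r_n)$ from monotonicity yields $\cf(x,r_n)\to \cf(x,r)$. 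Everything else is bookkeeping once this right-continuity is secured.
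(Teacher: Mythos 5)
Your proof is correct, but it takes a genuinely different route from the paper. The paper proves \cref{fact:dual} by a direct chain of set rewritings: from the definition, $\cf(x,r)=\sup_{\overline{B}_r(x)}|f-f(x)| = \sup\{\ell\geq 0:\exists y\in\overline{B}_r(x),\,|f(y)-f(x)|\geq\ell\} = \sup\{\ell\geq 0: d(x,[|f-f(x)|\geq\ell])\leq r\} = \sup\{\ell\geq 0:\of(x,\ell)\leq r\}$, where the middle equality is justified (in the ``$\leq$'' direction of the distance) because continuity of $f$ makes $[|f-f(x)|\geq\ell]$ closed, so a distance $\leq r$ is attained at some $y\in\overline{B}_r(x)$. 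You instead reduce everything to the pointwise equivalence $\of(x,\ell)\leq r \Leftrightarrow \cf(x,r)\geq\ell$: the forward direction is a one-line use of \cref{fact:primal}, and the reverse you close with monotonicity plus right-continuity of $\cf(x,\cdot)$, which you then establish by a compactness-and-subsequence argument. That argument is sound and is in effect the one-variable shadow of \cref{fact:continuous}, which the paper proves just after this fact; so your plan trades the paper's slick rewriting for reproving a piece of continuity that you cannot yet cite. A still shorter version of your reverse direction avoids right-continuity entirely: $\of(x,\ell)\leq r<\infty$ forces $[|f-f(x)|\geq\ell]$ to be nonempty and closed, so the infimum defining the distance is attained by some $y\in\overline{B}_r(x)$ with $|f(y)-f(x)|\geq\ell$, giving $\cf(x,r)\geq\ell$ directly.
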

\begin{proof}
        $\cf(x,r) = \sup_{\overline B_r(x)} |f-f(x)| = \sup \{\ell \geq 0: \exists y \in \overline B_r(x), |f(y)-f(x)| \geq \ell \}
        = \sup \{ \ell \geq 0 : d(x,[|f-f(x)|\geq\ell]) \leq r\} = \sup \{ \ell \geq 0: \of(x,\ell) \leq r\}$.
\end{proof}

\cref{fact:primal} and \cref{fact:dual} actually hold without continuity if one uses strict relations, including in the definition of $\cf$ and $\of$, with the convention that $\cf(x,0)=\of(x,0)=0$.

\begin{fact}
\label{fact:equiv}
If $f$ is nonconstant near $x$, then
the following are equivalent:
\begin{enumerate}[label=\rm{(\roman{*})}]
    \item \label{item:cf} $\exists \overline{r}>0: ~ \forall r \in (0,\overline{r}], ~~ \cf(x,r) \leq \cf(y,r)$;
    \item \label{item:of} $\exists \overline{\ell}>0: ~ \forall \ell \in (0,\overline{\ell}], ~~ \of(x,\ell) \geq \of(y,\ell)$.
\end{enumerate}
\end{fact}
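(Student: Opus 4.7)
The plan is to exploit the duality between $\cf$ and $\of$ given by Facts \ref{fact:primal} and \ref{fact:dual}, which present each of $\cf(x,\cdot)$ and $\of(x,\cdot)$ as a generalized inverse of the other. A pointwise comparison of $\cf(x,\cdot)$ and $\cf(y,\cdot)$ near zero should translate, after inversion, into the reverse pointwise comparison of $\of(x,\cdot)$ and $\of(y,\cdot)$ near zero.

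For (i) $\Rightarrow$ (ii), the plan is to choose $\overline{\ell}$ as any finite positive value bounded above by $\cf(x,\overline{r})$; this is possible because $f$ is nonconstant near $x$, so $\cf(x,\overline{r}) > 0$. For any $\ell \in (0,\overline{\ell}]$, the inequality $\cf(x,\overline{r}) \geq \ell$ together with Fact \ref{fact:primal} gives $\of(x,\ell) \leq \overline{r}$. Then one picks $r_k \searrow \of(x,\ell)$ with $r_k \in (0,\overline{r}]$ and $\cf(x, r_k) \geq \ell$ (taking $r_k \equiv \overline{r}$ in the boundary case $\of(x,\ell) = \overline{r}$, and otherwise using monotonicity of $\cf(x,\cdot)$). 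Hypothesis (i) yields $\cf(y, r_k) \geq \cf(x, r_k) \geq \ell$, so a second application of Fact \ref{fact:primal} gives $\of(y, \ell) \leq r_k$; letting $k \to \infty$ delivers $\of(y, \ell) \leq \of(x, \ell)$.

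The converse direction (ii) $\Rightarrow$ (i) proceeds symmetrically via Fact \ref{fact:dual}. One selects $\overline{r} > 0$ small enough that $\cf(x, \overline{r}) \leq \overline{\ell}$. Then for each $r \in (0, \overline{r}]$, every $\ell \geq 0$ satisfying $\of(x, \ell) \leq r$ obeys $\ell \leq \cf(x, r) \leq \overline{\ell}$ by Fact \ref{fact:dual}, so hypothesis (ii) gives $\of(y, \ell) \leq \of(x, \ell) \leq r$, whence $\ell \leq \cf(y, r)$ by another application of Fact \ref{fact:dual}. Taking the supremum over such $\ell$ yields $\cf(x, r) \leq \cf(y, r)$.

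The hard part lies in the second direction: guaranteeing $\cf(x, \overline{r}) \leq \overline{\ell}$ for some $\overline{r} > 0$ requires $\cf(x, r)$ to dip below $\overline{\ell}$ as $r \to 0^+$. This is automatic when $f$ is continuous at $x$, since then $\cf(x, r) \to 0$; without continuity one must either supplement the hypothesis with continuity at $x$ or pass to the strict-inequality variants of Facts \ref{fact:primal} and \ref{fact:dual} noted after them, which behave more robustly in the discontinuous case. The boundary cases in the first direction (namely $\of(x,\ell) = \overline{r}$ or $\of(x,\ell) = 0$) are handled by a judicious choice of the approximating sequence $r_k$.
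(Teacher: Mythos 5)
Your proposal is correct and follows essentially the same path as the paper: both directions hinge on the duality between $\cf$ and $\of$ recorded in Facts~\ref{fact:primal} and~\ref{fact:dual} together with the monotonicity of $\cf_x$ and $\of_x$. The paper's proof of \ref{item:cf}~$\Rightarrow$~\ref{item:of} sets $\overline{\ell}=\cf(x,\overline{r})$ and runs the inequality through a short chain of inclusions among the sets $\{r:\cf(\cdot,r)\geq\ell\}$, rather than approximating the infimum with a sequence $r_k$ as you do; the two are equivalent, but the set-restriction chain is tidier and avoids the boundary-case bookkeeping your sequential argument needs. For \ref{item:of}~$\Rightarrow$~\ref{item:cf} the paper takes $\overline{r}=\of(y,\overline{\ell})/2$ and shows $\of(x,\ell)>r$ for $\ell>\overline{\ell}$, whereas you shrink $\overline{r}$ so that $\cf(x,\overline{r})\leq\overline{\ell}$; both choices do the same job of restricting the supremum to $\ell\leq\overline{\ell}$. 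On the continuity caveat you raise: Facts~\ref{fact:sublevel}--\ref{fact:lsc} are stated in the paper under a standing assumption that $f$ is continuous, so the issue you flag does not arise; you are right, though, that some continuity is indeed being used (the paper's argument needs $\of(y,\overline{\ell})>0$, which also fails without continuity), so your diagnosis of where the assumption enters is sound even if the worry is moot in context.
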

\begin{proof}
\ref{item:cf} $\Longrightarrow$ \ref{item:of} Let $\overline{\ell} = \cf(x,\overline{r})>0$ and $\ell\in(0,\overline{\ell}]$. By \cref{fact:primal},
    \begin{align*}
        \of(x,\ell) & = \inf \{ r \geq 0: \cf(x,r) \geq \ell\} \\
        & = \inf \{ r \in [0,\overline{r}]: \cf(x,r) \geq \ell\} \\
        & \geq \inf \{ r \in [0,\overline{r}]: \cf(y,r) \geq \ell\} \\
        & \geq \inf \{ r \geq 0: \cf(y,r) \geq \ell\}  =\of(y,\ell).
    \end{align*}
\ref{item:of} $\Longrightarrow$ \ref{item:cf} Since $f$ is nonconstant, $\infty > \of(x,\overline{\ell}) \geq \of(y,\overline{\ell})>0$ after possibly reducing $\overline{\ell}$. Let $\overline{r} = \cf(y,\overline{\ell})/2>0$ and $r\in(0,\overline{r}]$. Since $\of(x,\ell) \geq \of(x,\overline{\ell}) \geq \of(y,\overline{\ell}) = 2\overline{r} > r$ for all $\ell\in(\overline{\ell},\infty)$, by \cref{fact:dual},
    \begin{align*}
        \cf(x,r) & = \sup \{ \ell \geq 0: \of(x,\ell) \leq r\} \\
        & = \sup \{ \ell \in [0,\overline{\ell}]: \of(x,\ell) \leq r\} \\
        & \leq \sup \{ \ell \in [0,\overline{\ell}]: \of(y,\ell) \leq r\} \\
        & \leq \sup \{ \ell \geq 0: \of(y,\ell) \leq r\} =\cf(y,r).
    \end{align*}
\end{proof}

\begin{fact}
\label{fact:continuous}
    $\cf$ is continuous.    
\end{fact}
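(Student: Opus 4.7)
The strategy is to establish sequential continuity of $\cf$ by proving upper and lower semicontinuity separately, leveraging the fact that $\overline{B}_r(x)$ is compact and $f$ is continuous (so $\cf(x,r)$ is attained as a maximum rather than a supremum). Fix an arbitrary sequence $(x_k, r_k)\to(x,r)$ in $\R^n\times \R_+$; the goal is to show $\cf(x_k,r_k)\to \cf(x,r)$.

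For the lower bound $\liminf_k \cf(x_k,r_k)\geq \cf(x,r)$, I would pick $y\in\overline{B}_r(x)$ such that $|f(y)-f(x)|=\cf(x,r)$, and then construct a sequence $y_k\in \overline{B}_{r_k}(x_k)$ with $y_k\to y$, for instance by projecting: $y_k = P_{\overline{B}_{r_k}(x_k)}(y)$. Since $d(y,\overline{B}_{r_k}(x_k))= \max\{0,|y-x_k|-r_k\}\to \max\{0,|y-x|-r\}\leq 0$, the projected points converge to $y$. Continuity of $f$ then gives $|f(y_k)-f(x_k)|\to |f(y)-f(x)|=\cf(x,r)$, while $|f(y_k)-f(x_k)|\leq \cf(x_k,r_k)$ for every $k$.

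For the upper bound $\limsup_k \cf(x_k,r_k)\leq \cf(x,r)$, for each $k$ I would select $y_k\in \overline{B}_{r_k}(x_k)$ attaining the maximum $\cf(x_k,r_k)=|f(y_k)-f(x_k)|$, which exists by compactness. Since $|y_k-x|\leq |y_k-x_k|+|x_k-x|\leq r_k+|x_k-x|$, the sequence $(y_k)$ is bounded, so a subsequence converges to some $y\in \R^n$ with $|y-x|\leq r$, i.e.\ $y\in \overline{B}_r(x)$. Passing to the limit along this subsequence and using continuity of $f$ yields $\lim|f(y_k)-f(x_k)|=|f(y)-f(x)|\leq \cf(x,r)$. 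Applying this to any subsequence realizing the $\limsup$ closes the argument.

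There is no real obstacle here; the case $r=0$ is handled uniformly by the same argument since $\overline{B}_0(x)=\{x\}$ forces $y=x$ in the upper-bound step and $y_k\to x$ in the lower-bound step. One could alternatively invoke Berge's maximum theorem, noting that the correspondence $(x,r)\mapsto \overline{B}_r(x)$ is continuous in the Hausdorff metric with nonempty compact values and that $((x,r),y)\mapsto |f(y)-f(x)|$ is jointly continuous, but the direct two-sided estimate above is elementary and self-contained.
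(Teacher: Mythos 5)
Your proof is correct. Both you and the paper extract, for each $(x_k,r_k)$, a maximizer $y_k\in\overline{B}_{r_k}(x_k)$ of $|f(\cdot)-f(x_k)|$ and pass to a convergent subsequence $y_k\to y\in\overline{B}_r(x)$; this is the common compactness core. The difference is in how the two directions are closed. The paper does it in one stroke: it shows directly that the limit point $y$ is a maximizer for $(x,r)$ by noting that for any $\epsilon>0$ one eventually has $B_{r-\epsilon}(x)\subseteq\overline{B}_{r_k}(x_k)$, so the inequality $|f(y_k)-f(x_k)|\geq|f(z)-f(x_k)|$ passes to the limit on $B_{r-\epsilon}(x)$, and then letting $\epsilon\searrow 0$ and using continuity of $f$ extends it to $\overline{B}_r(x)$; once $y$ is known to be a maximizer, $\cf(x_k,r_k)\to\cf(x,r)$ is immediate. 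You instead split into lower and upper semicontinuity, and for the lower bound you avoid the ball-containment argument altogether by projecting a fixed maximizer of $(x,r)$ onto $\overline{B}_{r_k}(x_k)$ and tracking the distance $\max\{0,|y-x_k|-r_k\}\to 0$. Your decomposition is slightly more modular and your projection step is a clean substitute for the paper's $\epsilon$-shrinkage; the paper's version is marginally more compact. Both are elementary and fully rigorous, and your remark about Berge's maximum theorem correctly identifies the general principle being reproved by hand.
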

\begin{proof}
    Let $\R\p n \times \R_+ \ni (x_k,r_k)\to (x,r)$. By continuity, there exists $y_k \in \overline{B}_{r_k}(x_k)$ such that $\cf(x_k,r_k) = |f(y_k)-f(x_k)|$, namely, $|f(y_k)-f(x_k)| \geq |f(z)-f(x_k)|$ for all $z\in \overline{B}_{r_k}(x_k)$. By compactness, $y_k \to y$ after taking a subsequence. For all $\epsilon >0$, we eventually have $B_{r-\epsilon}(x) \subseteq\overline{B}_{r_k}(x_k)$ and so $|f(y_k)-f(x_k)| \geq |f(z)-f(x_k)|$ for all $z\in B_{r-\epsilon}(x)$. Passing to the limit yields $|f(y)-f(x)| \geq |f(z)-f(x)|$ for all $z\in B_{r-\epsilon}(x)$. As $\epsilon$ is arbitrary, we have $|f(y)-f(x)| \geq |f(z)-f(x)|$ for all $z\in B_{r} (x)$. By continuity of $f$, this holds for all $z\in \overline{B}_{r}(x)$ and thus $\cf(x_k,r_k) = |f(y_k)-f(x_k)| \to |f(y)-f(x)| = \cf(x,r)$.
\end{proof}

\begin{fact}
\label{fact:lsc}
     $\of$ is lower semicontinuous.    
\end{fact}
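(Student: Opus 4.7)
The plan is to prove lower semicontinuity by a direct sequential argument using only continuity of $f$ and compactness. Fix a sequence $(x_k, \ell_k) \to (x, \ell)$ in $\R\p n \times \R_+$; the goal is $\liminf_k \of(x_k, \ell_k) \geq \of(x, \ell)$. After passing to a subsequence, I may assume $\of(x_k, \ell_k) \to L \in [0, \infty]$. The case $L = \infty$ is trivial, so assume $L < \infty$, which in particular forces $[|f - f(x_k)| \geq \ell_k]$ to be nonempty for all large $k$.

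For each such $k$, pick $y_k \in [|f - f(x_k)| \geq \ell_k]$ with $|y_k - x_k| \leq \of(x_k, \ell_k) + 1/k$, so that $|y_k - x_k| \to L$. Since $(x_k)$ converges and the distances are bounded, $(y_k)$ is bounded, so along a further subsequence $y_k \to y$ with $|y - x| = L$. Continuity of $f$ lets me pass to the limit in $|f(y_k) - f(x_k)| \geq \ell_k$, yielding $|f(y) - f(x)| \geq \ell$. Hence $y \in [|f - f(x)| \geq \ell]$ and $\of(x, \ell) \leq |y - x| = L$, which is what was needed.

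An alternative route packages the same idea through the duality in \cref{fact:primal}: choose $r_k \geq 0$ with $r_k \leq \of(x_k, \ell_k) + 1/k$ and $\cf(x_k, r_k) \geq \ell_k$, then pass to the limit $r_k \to L$ using the continuity of $\cf$ established in \cref{fact:continuous} to conclude $\cf(x, L) \geq \ell$, hence $\of(x, \ell) \leq L$. This is essentially the same proof repackaged on the $\cf$ side.

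I do not anticipate a genuine obstacle here. The only bookkeeping needed is to discard indices at which $\of(x_k, \ell_k) = \infty$ (which happens exactly when the superlevel set is empty), and these are irrelevant to $\liminf_k \of(x_k, \ell_k)$ whenever that liminf is finite.
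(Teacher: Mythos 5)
Your main argument — pass to a subsequence so $\of(x_k,\ell_k)\to L<\infty$, pick witnesses $y_k$, extract a convergent subsequence by compactness, and pass to the limit in $|f(y_k)-f(x_k)|\geq\ell_k$ using continuity of $f$ — is exactly the paper's proof. The proposal is correct and takes essentially the same approach; the alternative via \cref{fact:primal} and \cref{fact:continuous} is a valid repackaging but not needed.
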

\begin{proof}
    Let $(x_k,\ell_k) \to (x,\ell)\in \R\p n \times \R_+$ be such that $r=\lim\inf \of(x_k,\ell_k) < \infty$. There exists $y_k \in \R\p n$ such that $|x_k-y_k|\to r$ and $|f(x_k)-f(y_k)|\geq \ell_k$ up to a subsequence. By compactness, $y_k \to y$ up to another subsequence. Since $f$ is continuous, we may pass to the limit: $|x-y|=r$ and $|f(x)-f(y)|\geq \ell$. Thus $\lim\inf \of(x_k,\ell_k) = r \geq \of(x,\ell)$.
\end{proof}

Since $\cf$ is continuous, one has equality in \cref{fact:primal}, namely, $\of(x,\ell) = \inf \{ r \geq 0: \cf(x,r) = \ell\}$. This holds by the intermediate value theorem as $\cf(x,0)=0$. In contrast, since $\of$ is merely lower semicontinuous, equality does not necessarily hold in \cref{fact:dual} (think of the Cantor function). Likewise, $\cf_x$ need not be strictly increasing. To obtain such properties and others, more assumptions are needed.
\begin{assumption}
    \label{assum:isolated}
    Let $f:\R\p n \to \R$ be locally Lipschitz and nonconstant near $x\in \R\p n$. Suppose $\R \setminus \{f(x)\}$ contains no Clarke critical value of $f$ reached in $\overline{B}_{\overline{r}}(x)$ for some $\overline{r}>0$. Let $\overline{\ell} = \cf(x,\overline{r})$.
\end{assumption}

\begin{proposition}
\label{prop:isolated}
    Under \cref{assum:isolated},
    \begin{enumerate}[label=\rm{(\roman{*})}]
\item \label{item:curve_1} $\cf_x$ and $\of_x$ are continuous and strictly increasing on $[0,\overline{r}]$ and $[0,\overline{\ell}]$ respectively.
\item \label{item:curve_2} $\forall (r,\ell) \in [0,\overline{r}]\times [0,\overline{\ell}],~  (\cf_x \circ \of_x)(r) = r$ and $(\of_x \circ \cf_x)(\ell) = \ell$.
\item \label{item:curve_3} $\forall (r,\ell) \in \gph \cf_x|_{[0,\overline r]}, ~\arg\max_{\overline{B}_r(x)} |f-f(x)| = \arg\max_{S_r(x)} |f-f(x)| = P_{[|f-f(x)|=\ell]}(x)$.
    \end{enumerate}
\end{proposition}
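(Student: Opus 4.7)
My plan is to derive (i)--(iii) in sequence, using Assumption 1 only through the following single implication: any local extremum of $f$ in $\overline{B}_{\overline r}(x)$ whose value differs from $f(x)$ would be Clarke critical (since $\overline{\partial}(-f)=-\overline{\partial} f$) and would hence violate the hypothesis on critical values. Continuity of $\cf_x$ on $[0,\overline r]$ is immediate from \cref{fact:continuous}, so the real work in (i) is strict monotonicity; (ii) and (iii) then follow with essentially no new ideas.

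For strict monotonicity of $\cf_x$, I would argue by contradiction. Fix $0\le r_1 < r_2 \le \overline r$ with $\cf(x,r_1)=\cf(x,r_2)=\ell$. The case $r_1=0$ directly contradicts nonconstancy of $f$ near $x$. For $r_1>0$ one has $\ell>0$; picking a maximizer $y_1\in\overline B_{r_1}(x)$ of $|f-f(x)|$, the strict inequality $r_1<r_2$ places $y_1$ in the open ball $B_{r_2}(x)$, where it remains a global maximizer, hence is a local maximum of $|f-f(x)|$ in $\R\p n$. Because $|f(y_1)-f(x)|=\ell>0$, the sign of $f-f(x)$ is locally constant near $y_1$, so $y_1$ is a local extremum of $\pm f$ and therefore $0\in\overline{\partial} f(y_1)$. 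Assumption 1 then forces $f(y_1)=f(x)$, contradicting $\ell>0$. Thus $\cf_x$ is a strictly increasing continuous bijection $[0,\overline r]\to [0,\overline\ell]$, and \cref{fact:primal} identifies $\of_x$ on $[0,\overline\ell]$ with its set-theoretic inverse; this yields (i) for $\of_x$ and the two inversion identities of (ii).

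For (iii), fix $(r,\ell)\in\gph\cf_x|_{[0,\overline r]}$; the case $r=0$ is trivial, so assume $r>0$ and hence $\ell>0$. Any $y\in\arg\max_{\overline B_r(x)}|f-f(x)|$ satisfies $|f(y)-f(x)|=\ell$, and exactly the local-extremum/Clarke-critical argument from (i) rules out $|y-x|<r$; this gives the first equality of (iii). For the second, such a $y$ lies in $[|f-f(x)|=\ell]$ with $|y-x|=r$, and any competitor $z$ in that level set with $|z-x|<r$ would yield $\cf(x,|z-x|)\ge\ell=\cf_x(r)$, contradicting strict monotonicity; hence $y\in P_{[|f-f(x)|=\ell]}(x)$. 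Conversely, \cref{fact:sublevel} together with the identity $\of_x(\ell)=r$ established in (ii) forces $|y-x|=r$ for any $y\in P_{[|f-f(x)|=\ell]}(x)$, placing it on $S_r(x)$, where it automatically attains the maximum $\ell$ of $|f-f(x)|$.

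The only real obstacle is the strict monotonicity step: it is the sole place where the no-critical-values hypothesis is essential, and the rest of the proposition is a formal consequence of it combined with \cref{fact:sublevel,fact:primal,fact:continuous}. One small subtlety worth flagging is that the local-extremum-implies-Clarke-critical step uses $|f(y_1)-f(x)|>0$ to strip the absolute value; without this one would only know that $|f-f(x)|$ is locally maximized, which is not enough to invoke the assumption on $f$ itself.
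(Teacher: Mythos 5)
Your proposal is correct and follows essentially the same route as the paper: strict monotonicity of $\cf_x$ is established by showing that an interior maximizer of $|f-f(x)|$ with value $\ell>0$ would be a local extremum of $\pm f$, hence Clarke critical with value $\neq f(x)$, contradicting \cref{assum:isolated}; parts (ii) and (iii) then fall out formally from \cref{fact:primal,fact:sublevel,fact:continuous}. The only cosmetic difference is in the first equality of (iii), where you re-run the Clarke-critical argument whereas the paper simply invokes strict monotonicity of $\cf_x$ (via $\cf_x(r)=|f(y)-f(x)|\le\cf_x(|y-x|)$) to force $|y-x|=r$; both are valid.
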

\begin{proof}
    \ref{item:curve_1} \& \ref{item:curve_2} Continuity of $\cf_x$ follows from \cref{fact:continuous}. Clearly $\cf_x$ is increasing on $[0,\overline{r}]$. It is in fact strictly increasing.  Indeed, suppose $\cf_x$ is constant near $r\in (0,\overline{r})$. If $\cf(x,r)=0$, then $f$ is constant near $x$, violating our assumption. Otherwise, by continuity there exists $y \in \overline{B}_r(x)$ such that $\cf(x,r) = |f(y)-f(x)|$, which must be a local maximum of $|f-f(x)|$. Since $\cf(x,r)>0$, $y$ is either a local minimum or a local maximum of $f$, and hence $0 \in \partial f(y) = \co \partial f(y) = \overline{\partial} f(y)$ or $0 \in \partial (-f)(y) \subseteq \overline{\partial} (-f)(y) = -\overline{\partial} f(y)$ by Fermat's rule \cite[Theorem 10.1]{lee2012smooth}. Since $\R \setminus \{f(x)\}$ contains no Clarke critical value of $f$ reached in $\overline{B}_{\overline{r}}(x)$, we have $f(y) = f(x)$. It follows that $\cf(x,r) = 0$, a contradiction. 

By \cref{fact:primal} and the discussion below \cref{fact:lsc}, $\cf_x(\of_x(\ell)) = \ell$ for all $\ell \in [0,\overline{\ell}]$. Thus for all $r\in [0,\overline{r}]$, $ \cf_x((\of_x\circ\cf_x)(r))= (\cf_x \circ \of_x)(\cf_x(r)) = \cf_x(r)$
and $(\of_x\circ\cf_x)(r) = r$. It follows that $\of_x$ is continuous and strictly increasing.

\ref{item:curve_3} Let $r \in [0,\overline{r}]$ and $y \in \arg\max_{\overline{B}_r(x)} |f-f(x)|$. As $\cf_x(r) = |f(y)-f(x)| \leq \cf(x,|y-x|)$ and $\cf_x$ is strictly increasing, $r \leq |x-y|$. Hence $y\in \arg\max_{S_r(x)} |f-f(x)|$. 
Let $\ell = \cf_x(r)$. $y \in \arg\max_{S_r(x)} |f-f(x)|$ iff $\cf_x(r) = |f(y)-f(x)|$ and $|x-y|=r$ iff $\cf_x(|x-y|) = \ell = |f(y)-f(x)| = \cf_x(r)$ iff $\of_x(\ell) = |x-y|$ and $|f(y)-f(x)|=\ell$ iff $y \in P_{[|f-f(x)|=\ell]}(x)$. 
\end{proof}



Flatness can be equivalently defined using $\of$ under minimal assumptions.

\begin{proposition}
    Suppose $f:\R\p n \to \R$ is continuous and $[f = \alpha]$ is nowhere dense for some $\alpha\in \R$. Then
    $$\forall x,y\in [f = \alpha], ~~~ x \preceq y ~~\Longleftrightarrow ~~ \exists \overline{\ell}>0: ~ \forall \ell \in (0,\overline{\ell}], ~~~ \of(x,\ell) \geq \of(y,\ell).$$
    If in addition, $f$ is locally Lipschitz definable, then
    $$\forall x,y\in [f = \alpha], ~~~ x \prec y ~~\Longleftrightarrow ~~ \exists \overline{\ell}>0: ~ \forall \ell \in (0,\overline{\ell}], ~~~ \of(x,\ell) > \of(y,\ell).$$
\end{proposition}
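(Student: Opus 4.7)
The first equivalence follows from Fact~\ref{fact:equiv} applied at $x$. That fact asserts the biconditional between $\cf(x,r)\leq \cf(y,r)$ for all small $r>0$ and $\of(x,\ell)\geq \of(y,\ell)$ for all small $\ell>0$, provided $f$ is nonconstant near $x$. The nonconstancy hypothesis holds for every $x\in[f=\alpha]$: since $f$ is continuous with $f(x)=\alpha$, constancy of $f$ on a neighborhood $U$ of $x$ would force $U\subseteq [f=\alpha]$, contradicting that $[f=\alpha]$ is nowhere dense.

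For the strict version, assume $f$ is locally Lipschitz definable. By Fact~\ref{fact:total}, $x\prec y$ is equivalent to $\cf(x,r)<\cf(y,r)$ for all $r$ in a right-neighborhood of $0$, so the task reduces to transferring this strict $\cf$-inequality to the strict $\of$-inequality. The plan is to apply Proposition~\ref{prop:isolated} at both $x$ and $y$ simultaneously with a common radius, which requires verifying Assumption~\ref{assum:isolated} at both points. Definability and continuity of $f$ combined with the definable Morse-Sard theorem recalled in Section~\ref{subsec:O-minimal structures} yield only finitely many Clarke critical values of $f$, so there is an open neighborhood $V$ of $\alpha=f(x)=f(y)$ in $\R$ containing no other critical value. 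By continuity, shrink a common $\overline{r}>0$ so that $f(\overline{B}_{\overline{r}}(x)\cup \overline{B}_{\overline{r}}(y))\subseteq V$; no critical value in $\R\setminus\{\alpha\}$ is then attained in either ball. Proposition~\ref{prop:isolated} now gives that $\cf_x,\of_x$ and $\cf_y,\of_y$ are continuous and strictly increasing mutual inverses on the matching intervals.

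With this in hand, both strict implications follow by a simple change of variables. For the forward direction, given $\cf(x,r)<\cf(y,r)$ for $r\in(0,\overline{r}]$, set $\overline{\ell}=\cf_x(\overline{r})>0$ and, for $\ell\in(0,\overline{\ell}]$, let $r=\of_x(\ell)\in(0,\overline{r}]$, so $\cf_x(r)=\ell$. Applying the strictly increasing $\of_y$ to $\ell=\cf_x(r)<\cf_y(r)$ yields $\of_y(\ell)<\of_y(\cf_y(r))=r=\of_x(\ell)$. The backward direction is symmetric with the roles of $\cf$ and $\of$ interchanged: given $\of_x(\ell)>\of_y(\ell)$ on $(0,\overline{\ell}]$, choose $\ell$ small enough that both $\of_x(\ell),\of_y(\ell)\in(0,\overline{r}]$ (possible by continuity of $\of_x,\of_y$ at $0$), set $r=\of_x(\ell)$, and apply the strictly increasing $\cf_y$ to $\of_y(\ell)<r$ to conclude $\cf_x(r)=\ell=\cf_y(\of_y(\ell))<\cf_y(r)$.

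The principal obstacle in the argument is arranging Assumption~\ref{assum:isolated} uniformly at $x$ and $y$ with a common radius; this is precisely what the definable Morse-Sard theorem together with continuity of $f$ delivers. After that step, the remainder is bookkeeping on the monotone inverse pairs provided by Proposition~\ref{prop:isolated}.
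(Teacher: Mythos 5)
Your proof is correct and takes essentially the same approach as the paper, which for this proposition simply lists the ingredients: \cref{fact:equiv} for the non-strict equivalence, and Facts \ref{fact:total}--\ref{fact:continuous}, \cref{prop:isolated}, and the definable Morse--Sard theorem for the strict one. Your write-up assembles exactly these pieces and fills in the details the paper leaves implicit (deducing nonconstancy from nowhere-density, verifying \cref{assum:isolated} at both points, and the bookkeeping on the mutual inverses).
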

\begin{proof}
    The first point follows from \cref{fact:equiv}, while the second follows from Facts \ref{fact:total}-\ref{fact:continuous}, \cref{prop:isolated}, and the definable Morse-Sard theorem \cite[Corollary 9]{bolte2007clarke}. 
\end{proof}

\subsection{Curve selection}
\label{subsec:Curve selection}

\cite[Definition 3.1]{pham2020local}

With the above definitions in place, it is natural to try and select optimal curves. This will be useful later.

\begin{lemma}
\label{lemma:curve}
Under \cref{assum:isolated},
\begin{enumerate}[label=\rm{(\roman{*})}]
\item \label{item:curve_4} There exist $\gamma:[0,\overline{r}]\to \R\p n$, $\lambda:[0,\overline{r}]\to \R$, and $v:[0,\overline{r}]\to \R\p n$ such that 
 for all $r \in [0,\overline{r}]$,  
$ \gamma(r)=x + \lambda(r)v(r) \in \arg\max_{\overline{B}_r(x)} |f-f(x)|$, $|\gamma(r)-x|=r$, and $v(r) \in \overline{\partial} f(\gamma(r))$.
\item \label{item:curve_5} If $x$ is a local minimum of $f$, then $\lambda(r)>0$ and $v(r) \in -\partial (-f)(\gamma(r))$ for all $r\in (0,\overline{r}]$. If in addition, $f$ is regular, then one may choose $v(r) \in \overline\nabla f(\gamma(r))$ for all $r\in (0,\overline{r}]$.
\item \label{item:curve_6} If $f$ is definable, then $\gamma,\lambda,v$ can be made $C\p k$ definable on $(0,\overline{r}]$ for any $k \in \N$, such that $\gamma$ is $C\p 1$ on $[0,\overline{r}]$, and $v$ continuous on $[0,\overline{r}]$, after possibly reducing $\overline{r}$.
\item \label{item:curve_7} If $x$ is a local minimum of $f$ and $f$ is definable and differentiable, then $\lambda(r)>0$, $\cf_x$ is differentiable on $[0,\overline{r}]$, and $(\cf_x)'(r) = r/\lambda(r)$ for all $r \in (0,\overline{r}]$.
\end{enumerate}
\end{lemma}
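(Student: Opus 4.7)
By item \ref{item:curve_3} of \cref{prop:isolated}, every maximizer $\gamma(r)$ of $|f-f(x)|$ on $\overline{B}_r(x)$ lies on $S_r(x)$, and by non-constancy $f(\gamma(r))\neq f(x)$ for $r\in(0,\overline{r}]$. Set $\sigma=\mathrm{sign}(f(\gamma(r))-f(x))\in\{\pm 1\}$; near $\gamma(r)$, $|f-f(x)|$ coincides with $\sigma(f-f(x))$, so $\gamma(r)$ is a local minimum of the locally Lipschitz function $-\sigma f + \delta_{\overline{B}_r(x)}$. Fermat's rule combined with the Lipschitz sum rule \cite[Corollary 10.9]{rockafellar2009variational} yields
$$0\in\partial(-\sigma f)(\gamma(r))+N_{\overline{B}_r(x)}(\gamma(r))=\partial(-\sigma f)(\gamma(r))+\R_+(\gamma(r)-x),$$
so there exists $w\in\partial(-\sigma f)(\gamma(r))$ with $w=-\mu(\gamma(r)-x)$ for some $\mu\geq 0$. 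Setting $v(r)=-w$ when $\sigma=1$ and $v(r)=w$ when $\sigma=-1$ places $v(r)\in -\partial(-f)(\gamma(r))\cup\partial f(\gamma(r))\subseteq\overline{\partial}f(\gamma(r))$ and gives $\gamma(r)-x=(\sigma/\mu)v(r)$. The no-Clarke-critical-value clause of \cref{assum:isolated} rules out $\mu=0$ (else $0\in\overline{\partial}f(\gamma(r))$ with $f(\gamma(r))\neq f(x)$), so $\lambda(r)=\sigma/\mu$ is finite, proving (i). For (ii), $x$ being a local minimum forces $\sigma=+1$ for small $r$, giving $\lambda(r)>0$ and $v(r)\in -\partial(-f)(\gamma(r))$; under regularity of $f$, $\partial f(\gamma(r))=\overline{\partial}f(\gamma(r))\supseteq -\partial(-f)(\gamma(r))$, so $v(r)\in\partial f(\gamma(r))$.

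For (iii), the multifunction $r\mapsto\arg\max_{\overline{B}_r(x)}|f-f(x)|$ is definable with nonempty values, so definable selection produces a definable $\gamma$; the associated $\lambda$ and $v$ inherit definability from the first-order analysis. Applying the monotonicity and cell decomposition theorems \cite[4.1-4.2]{van1996geometric} coordinate by coordinate, I shrink $\overline{r}$ to make $\gamma,\lambda,v$ all $C^k$ on $(0,\overline{r}]$. Continuity of $\gamma$ at $0$ is automatic from $|\gamma(r)-x|=r$; continuity of $v$ at $0$ uses that $|v(r)|$ is bounded by the local Lipschitz constant of $f$, so each coordinate of $v$ admits a finite one-sided limit by univariate monotonicity (the limit lying in $\overline{\partial}f(x)$ by the closed-graph property). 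For $C^1$-smoothness of $\gamma$ at $0$, I apply L'H\^opital-style reasoning in the o-minimal setting: each $(\gamma_i(r)-x_i)/r$ is bounded by $1$ and definable, hence has a limit $a_i$, and since an infinite limit of $\gamma_i'$ would violate $|\gamma_i(r)-x_i|\leq r$ upon integration, one concludes $\gamma_i'(r)\to a_i$; the sphere constraint then forces $|\gamma'(0^+)|=1$.

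For (iv), differentiability of $f$ collapses $\overline{\partial}f(\gamma(r))$ to $\{\nabla f(\gamma(r))\}$, so $v(r)=\nabla f(\gamma(r))$ and $\lambda(r)>0$ by (ii). Since $x$ is a local minimum, $\cf_x(r)=f(\gamma(r))-f(x)$ for small $r$, and the chain rule gives $(\cf_x)'(r)=\langle\nabla f(\gamma(r)),\gamma'(r)\rangle$. Differentiating $|\gamma(r)-x|^2=r^2$ yields $\langle\gamma(r)-x,\gamma'(r)\rangle=r$, and substituting $\gamma(r)-x=\lambda(r)\nabla f(\gamma(r))$ gives $\lambda(r)(\cf_x)'(r)=r$, i.e., $(\cf_x)'(r)=r/\lambda(r)$. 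The most delicate step is the $C^1$-extension of $\gamma$ to $r=0$ in (iii), where the sphere constraint has to be reconciled with univariate o-minimal asymptotic analysis; the remaining work is routine subdifferential calculus and chain-rule manipulation once the smooth definable selection is available.
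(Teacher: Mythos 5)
Your proof is correct and follows the same overall architecture as the paper's: Fermat's rule on the ball with the normal-cone decomposition and a sign variable for item (i), the sign collapse $\sigma=1$ for item (ii), definable selection plus the monotonicity theorem and a L'H\^opital-type boundedness argument for item (iii), and the sphere constraint $|\gamma(r)-x|=r$ for item (iv). Two places where your route is genuinely leaner are worth recording.

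For item (ii), the paper handles the regular case by invoking the directional-derivative formula \cite[Theorem 9.16]{rockafellar2009variational} and running a Cauchy--Schwarz equality argument to conclude that \emph{every} $v\in\partial f(\gamma(r))$ is a nonnegative multiple of $\gamma(r)-x$, then picking a nonzero one. You instead observe that $-\partial(-f)\subseteq -\overline{\partial}(-f)=\overline{\partial}f$ always, and that regularity (near $x$, hence at $\gamma(r)$ for small $r$) forces $\overline{\partial}f(\gamma(r))=\partial f(\gamma(r))$, so the $v(r)$ already produced in (i)--(ii) lands in $\partial f(\gamma(r))$ for free. This is shorter and avoids redefining $v(r)$; it proves exactly the stated inclusion, whereas the paper's detour gives the stronger structural fact $\partial f(\gamma(r))\subseteq\mathbb{R}_+(\gamma(r)-x)$, which the lemma doesn't actually claim.

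For item (iv), the paper works out $\cf_x(r+\epsilon)-\cf_x(r)$ by an explicit Taylor expansion in $\epsilon$, absorbing the $|\gamma(r+\epsilon)-\gamma(r)|^2$ and $\epsilon^2$ terms into the remainder. You instead differentiate the identity $|\gamma(r)-x|^2=r^2$ to get $\langle\gamma(r)-x,\gamma'(r)\rangle=r$, apply the chain rule to $\cf_x(r)=f(\gamma(r))-f(x)$, and substitute $\gamma(r)-x=\lambda(r)\nabla f(\gamma(r))$. This is cleaner and fully justified because item (iii) has already delivered $\gamma\in C^1$ on $[0,\overline{r}]$. One small presentational gap in (iii): you write that $\lambda$ and $v$ ``inherit definability from the first-order analysis,'' which is vaguer than the paper's explicit construction ($v$ as a definable selection of $r\rightrightarrows\{v\in\overline{\partial}f(\gamma(r)):\exists\lambda,\ \gamma(r)=x+\lambda v\}$, then $\lambda(r)=\langle\gamma(r)-x,v(r)\rangle/|v(r)|^2$); you should spell this out, but it is a routine fill-in rather than a missing idea.
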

\begin{proof}
\ref{item:curve_4} By \cref{prop:isolated} \ref{item:curve_3}, there exists $\gamma(r) \in S_r(x)$ such that $\cf(x,r) = |f(\gamma(r))-f(x)|$. The first-order optimality condition \cite[Theorem 8.15]{rockafellar2009variational} reads $0 \in \partial (-|f-f(x)|)(\gamma(r)) + N_{\overline{B}_r(x)}(\gamma(r))$. If $r = 0$, then let $\lambda(0) = 0$ and $v(0) \in \overline\partial f(x)\neq \emptyset$ by \cite[Theorem 9.13]{rockafellar2009variational}. Otherwise, there exist $w(r) \in \partial (s(r)f)(\gamma(r)) \subseteq \co \partial (s(r)f)(\gamma(r)) = \overline{\partial} (s(r)f)(\gamma(r)) = s(r) \overline{\partial} f(\gamma(r))$ with $s(r) = \mathrm{sign}(f(x)-f(\gamma(r)))$ and $\mu(r) \geq 0$ such that $w(r) + \mu(r)(\gamma(r)-x) = 0$. If $\mu(r) = 0$, then $w(r) = 0$ and so $f(\gamma(r))$ is a Clarke critical value of $f$. By assumption $f(\gamma(r)) = f(x)$ and thus $\cf_x(r) = 0$, i.e., $r = 0$, a contradiction. Thus $\mu(r)> 0$. We may hence define $\lambda(r) = -s(r)/\mu(r)$ and $v(r) = s(r) w(r)$.

\ref{item:curve_5} When $x$ is a local minimum of $f$, $s(r) = -1$. 
Since $\gamma(r)$ is a local minimum of $g = -f+\delta_{B_r(x)}$, $dg(\gamma(r))\geq 0$ by Fermat's rule \cite[Theorem 10.1]{rockafellar2009variational}. 
As $f$ is locally Lipschitz regular, 
by \cite[Theorem 9.16]{rockafellar2009variational} $f$ is semidifferentiable and $df(\cdot) = \sigma_{\partial f (\cdot)}$. Thus $-f$ is semidifferentiable and $d(-f) = -df$. Since $-f$ is semidifferentiable, $dg = d(-f+\delta_{B_r(x)}) = d(-f)+d\delta_{B_r(x)}$. Thus  $dg(\gamma(r)) = -\sigma_{\partial f (\gamma(r))}+\sigma_{N_{B_r(x)}(\gamma(r))}\geq 0$ and $\sigma_{\partial f (\gamma(r))} \leq \sigma_{N_{B_r(x)}(\gamma(r))}$, namely $\partial f (\gamma(r)) \subseteq N_{B_r(x)}(\gamma(r))= \R_+(\gamma(r)-x)$. But $\partial f$ is nonempty convex compact valued by \cite[Theorem 9.61]{rockafellar2009variational}, and so $\partial f (\gamma(r)) = [a(r),b(r)](\gamma(r)-x)$ for some $0< a(r)\leq b(r)$ (recall that $0\notin \overline{\partial}f(\gamma(r))$). Also $\co \overline\nabla f(\gamma(r)) = \partial f(\gamma(r))$, hence $b(r)(\gamma(r)-x) \in \nabla f(\gamma(r))$. Now, simply let $v(r) = b(r)(\gamma(r)-x)$ and $\lambda(r)=1/b(r)>0$.

\ref{item:curve_6} When $f$ is definable, $[0,\overline{r}] \ni r \rightrightarrows \arg\max_{\overline{B}_r(x)}|f-f(x)|$ is definable, so there exists a definable selection $\gamma$ \cite[4.5]{van1996geometric}. Consequently $[0,\overline{r}] \ni r \rightrightarrows \{ v \in \overline{\partial} f(\gamma(t)) : \exists \lambda \in \R : \gamma(t) = x + \lambda v \}$ is definable, yielding a definable selection $v$. Finally, $\lambda(r) = \langle \gamma(r)-x,v(r)\rangle /|v(r)|\p 2$ is definable. The monotonicity theorem ensures $\gamma,\lambda,v$ are $C\p k$ for any $k\in \N$ on $(0,\overline{r}]$ after possibly reducing $\overline{r}$. Also $|\gamma_i(r)|\leq |\gamma(r)| = r$ so $|\gamma'_i(r)| \leq 1$ on $(0,\overline{r}]$ after possibly reducing $\overline{r}$ and $|\gamma'(r)|=\sqrt{|\gamma'_1(r)|\p 2 + \cdots + |\gamma'_n(r)|\p 2} \leq \sqrt{n}$. Hence $\lim_{r\searrow 0}\gamma'(r)$ exists. By the mean value theorem, $\gamma_i'(0) = \lim_{r\searrow 0} (\gamma_i(r)-x_i)/r = \lim_{r\searrow 0}\gamma_i'(r)$. Finally, one can take $v(0)=\lim_{r\searrow 0} v(r)\in \overline{\partial} f(\gamma(0))$, where the limit exists because $v(r)$ is bounded by \cite[Theorem 9.13]{rockafellar2009variational} and \cite[Proposition 3 p. 42]{aubin1984differential}.

\ref{item:curve_7} With $-\nabla f(\gamma(r)) + \mu(r) (\gamma(r)-x)=0$, we have \begin{align*}
    \cf_x(r+\epsilon)-\cf_x(r) & = f(\gamma(r+\epsilon)) - f(\gamma(r)) \\
    & = \langle \nabla f(\gamma(r)),\gamma(r+\epsilon)-\gamma(r)\rangle + o(|\gamma(r+\epsilon)-\gamma(r)|) \\
    & = \mu(r) \langle \gamma(r)-x,\gamma(r+\epsilon)-\gamma(r)\rangle + o(|\gamma(r+\epsilon)-\gamma(r)|) \\
    & = \mu(r) (|\gamma(r+\epsilon)|\p 2-|\gamma(r)|\p 2)/2 + o((\mu(r)+1)|\gamma(r+\epsilon)-\gamma(r)|) \\
    & = \mu(r) ((r+\epsilon)\p 2-r\p 2)/2 + o((\mu(r)+1)|\gamma(r+\epsilon)-\gamma(r)|) \\
    & = \mu(r) (r\epsilon + \epsilon\p 2/2) + o((\mu(r)+1)|\gamma(r+\epsilon)-\gamma(r)|) \\
    & = r \mu(r) \epsilon + o(\epsilon).
\end{align*}
\end{proof}

\subsection{Calculus rules}
\label{subsec:Calculus rules}
While the definition of flatness is quite general, determining flat minima directly from the definition is not always easy.
We thus develop some simple calculus rules. We begin with the nonsmooth case.
\begin{lemma}
\label{lemma:regular}
    If $f:\R\p n \to \R$ is Lipschitz continuous and regular near $x\in \R\p n$, then 
        $$
    \lip f(x) = \max_{|u|=1} ~  \lim_{\tau \searrow 0} \frac{|f(x+\tau u)-f(x)|}{\tau} = \limsup_{\scriptsize \begin{array}{c} y \rightarrow x\\ y \neq x\end{array}} \frac{|f(y)-f(x)|}{|y-x|}.
$$
\end{lemma}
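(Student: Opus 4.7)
The plan is to establish the three-way chain
\begin{equation*}
    \lip f(x) \;\geq\; \limsup_{\substack{y\to x\\ y\neq x}} \frac{|f(y)-f(x)|}{|y-x|} \;\geq\; \max_{|u|=1}\lim_{\tau\searrow 0}\frac{|f(x+\tau u)-f(x)|}{\tau} \;\geq\; \lip f(x),
\end{equation*}
which forces all three quantities to coincide.

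The first inequality is immediate from the definition of $\lip f(x)$ given in the background section: $\lip f(x)=\limsup_{y,y'\to x,\,y\neq y'}|f(y)-f(y')|/|y-y'|$, so restricting the limsup to pairs with $y'=x$ held fixed can only decrease it. For the second inequality, I first need to know that the limit in $\tau$ actually exists for every unit $u$. Because $f$ is Lipschitz and regular near $x$, I invoke \cite[Theorem 9.16]{rockafellar2009variational} (already used in the proof of \cref{lemma:curve}\ref{item:curve_5}) to get that the one-sided directional derivative $f'(x;u)=\lim_{\tau\searrow 0}[f(x+\tau u)-f(x)]/\tau$ exists and moreover satisfies
\begin{equation*}
    f'(x;u)=\max_{v\in\partial f(x)}\langle v,u\rangle.
\end{equation*}
Hence $\lim_{\tau\searrow 0}|f(x+\tau u)-f(x)|/\tau=|f'(x;u)|$ is well-defined, and picking the particular sequences $y=x+\tau u$ with $\tau\searrow 0$ in the limsup yields the second inequality.

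For the third inequality, I combine the support-function formula above with the identity $\lip f(x)=\max\{|v|:v\in\partial f(x)\}$ from \cite[Theorem 9.13]{rockafellar2009variational}, recalled in \cref{subsec:Variational analysis}. Choose $v^*\in\partial f(x)$ with $|v^*|=\lip f(x)$. If $v^*=0$ the claim is trivial; otherwise set $u^*=v^*/|v^*|$, so that $|u^*|=1$ and
\begin{equation*}
    |f'(x;u^*)|=\Bigl|\max_{v\in\partial f(x)}\langle v,u^*\rangle\Bigr|\geq \langle v^*,u^*\rangle=|v^*|=\lip f(x).
\end{equation*}
Taking the maximum over unit $u$ gives the third inequality.

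The only non-routine ingredient is the identity $f'(x;u)=\max_{v\in\partial f(x)}\langle v,u\rangle$, which is precisely where the regularity hypothesis enters: without it, the one-sided directional derivative need not even exist, and only the Clarke generalized directional derivative would agree with the support function of $\overline{\partial} f(x)$. With regularity in hand, everything else is an elementary manipulation of suprema. I expect no further subtleties.
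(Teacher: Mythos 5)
Your proposal is correct and uses exactly the same two ingredients as the paper's proof, namely \cite[Theorem 9.13]{rockafellar2009variational} for the identity $\lip f(x)=\max\{|v|:v\in\partial f(x)\}$ and \cite[Theorem 9.16]{rockafellar2009variational} for the directional-derivative formula under regularity. The only difference is organizational: the paper computes the first equality directly (leaving the limsup equality as an implicit sandwich), whereas you arrange all three quantities into a cyclic chain of inequalities, which is a bit more explicit but mathematically the same argument.
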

\begin{proof}
Since $f$ is Lipschitz continuous near $x$, by \cite[Theorem 9.13]{rockafellar2009variational} $\partial f(x)$ is nonempty and compact, and $\lip f(x) = \max \{|v| : v \in \partial f(x)\}$.
Since $f$ is Lipschitz continuous and regular near $x$, by \cite[Theorem 9.16]{rockafellar2009variational} we have 
$\lim_{\tau \searrow 0} |f(x+\tau u)-f(x)|/\tau = \max \{ \langle u,v \rangle : v \in \partial f(x) \}$.
Thus
$$
    \max_{|u|=1}   \lim_{\tau \searrow 0} \frac{|f(x+\tau u)-f(x)|}{\tau} = \max_{|u|=1}   \max_{v \in \partial f(x)}   \langle u,v \rangle 
     =  \max_{v \in \partial f(x)} \max_{|u|=1}   \langle u,v \rangle =  \max_{v \in \partial f(x)}  |v| = \lip f(x).$$
\end{proof}

\begin{proposition}
\label{prop:regular}
    If $f:\R\p n \to \R$ is regular near $x\in \R\p n$ and $\lip f(x) < \infty$, then 
        $
        \cf(x,r) = \lip f(x)r + o(r)$.
\end{proposition}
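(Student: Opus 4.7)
The plan is to establish the two-sided estimate $(\lip f(x)-\epsilon)r\leq\cf(x,r)\leq(\lip f(x)+\epsilon)r$ for every $\epsilon>0$ and all $r$ small enough depending on $\epsilon$. Since $\epsilon$ is arbitrary, this is equivalent to the desired expansion $\cf(x,r)=\lip f(x)r+o(r)$.

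For the upper bound, I would simply unwind the $\limsup$ defining $\lip f(x)$. Since $\lip f(x)<\infty$, given $\epsilon>0$ there exists $\delta>0$ such that $|f(y)-f(z)|\leq(\lip f(x)+\epsilon)|y-z|$ for all distinct $y,z\in\overline{B}_\delta(x)$. In particular $f$ is Lipschitz near $x$, so \cref{lemma:regular} applies. Specializing $z=x$ and taking the supremum over $y\in\overline{B}_r(x)$ for $r\leq\delta$ yields $\cf(x,r)\leq(\lip f(x)+\epsilon)r$.

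For the lower bound, I would invoke \cref{lemma:regular}, which under regularity identifies $\lip f(x)$ with $\max_{|u|=1}\lim_{\tau\searrow 0}|f(x+\tau u)-f(x)|/\tau$, and which furthermore equates this maximum with $\max\{|v|:v\in\partial f(x)\}$. Since $\partial f(x)$ is compact, the latter maximum is attained at some $v^*$; provided $v^*\neq 0$ I would set $u^*=v^*/|v^*|$, obtaining a unit direction with $\lim_{\tau\searrow 0}|f(x+\tau u^*)-f(x)|/\tau=\lip f(x)$. Then for every $\epsilon>0$ there is $r_0>0$ with $|f(x+ru^*)-f(x)|\geq(\lip f(x)-\epsilon)r$ for $r\in(0,r_0]$; since $x+ru^*\in\overline{B}_r(x)$, this transfers to $\cf(x,r)\geq(\lip f(x)-\epsilon)r$. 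The degenerate case $\lip f(x)=0$ requires no separate lower bound, being absorbed by the upper bound $\cf(x,r)\leq\epsilon r$.

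The main subtlety is justifying the existence of the maximizing direction $u^*$ on the sphere in the directional-derivative formula. This is where regularity of $f$ is essential: without it the right-hand side of \cref{lemma:regular} could fail to attain its supremum or even to equal $\lip f(x)$, and the lower bound would collapse. Compactness of $\partial f(x)$ provided by \cite[Theorem 9.13]{rockafellar2009variational} ensures a maximizer of $|v|$ exists, and regularity converts it into an honest direction along which the variation of $f$ realizes the Lipschitz modulus to first order.
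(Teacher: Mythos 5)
Your proof is correct and follows essentially the same route as the paper's: an upper bound from the $\limsup$ definition of $\lip f(x)$, and a lower bound by selecting a direction $\overline{u}$ on the sphere that realizes $\lip f(x)$ as a one-sided directional derivative via \cref{lemma:regular}. The only difference is presentational — you phrase it as a two-sided $\pm\epsilon$ estimate and spell out why the maximizing direction exists (compactness of $\partial f(x)$, then $u^*=v^*/|v^*|$), whereas the paper states the expansion directly and takes the existence of $\overline{u}\in\arg\max_{|u|=1}\lim_{\tau\searrow 0}|f(x+\tau u)-f(x)|/\tau$ as given.
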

\begin{proof}
    On the one hand, 
    \begin{align*}
        \frac{\cf(x,r)}{r} & = \sup_{y \in B_r(x)\setminus \{x\}} \frac{|f(y)-f(x)|}{r}
        \leq \sup_{y \in B_r(x)\setminus \{x\}} \frac{|f(y)-f(x)|}{|y-x|} \\
        & \leq \sup_{y,z \in B_r(x), y \neq z} \frac{|f(y)-f(z)|}{|y-z|}
         = \lip f(x)+o(1)
    \end{align*}
    for $r > 0$ near $0$. On the other hand, let 
    $\overline{u} \in \arg\max_{|u|=1} \lim_{\tau \searrow 0} |f(x+\tau u)-f(x)|/\tau$.
    By \cref{lemma:regular}, we have 
    $\cf(x,r) \geq |f(x+ r \overline{u}) - f(x)| = \lip f(x) r + o(r)$
    for $r \geq 0$ near $0$.
\end{proof}

Without regularity, \cref{prop:regular} may fail.
\begin{example}
    If $f(x,y) = \min\{x\p2,|y|\}$, then $\lip f(0,0) = 1$ and yet $\cf((0,0),r) =(\sqrt{1+4r\p 2} - 1)/2 = r\p 2 + o(r\p 2) \neq r+o(r)$.
\end{example}

We next turn to the differentiable case.

\begin{proposition}
\label{prop:smooth}
    If $f:\R\p n \to \R$ is $D\p k$ at $x\in \R\p n$ and $f\p{(i)}(x) = 0$ for all $i\in \llbracket 1,k-1\rrbracket$, then $\cf(x,r) = \|f\p{(k)}(x)\| r\p k/k! +o(r\p k)$.
\end{proposition}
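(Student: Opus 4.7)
The plan is to reduce the statement to a uniform Taylor expansion around $x$ and then match the supremum on the ball to the operator norm of the top derivative.

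First, I would invoke Taylor's formula in the $D^k$ setting. Since $f$ is $D^k$ at $x$ and $f^{(i)}(x)=0$ for $i\in\llbracket 1,k-1\rrbracket$, there is an expansion
\[
f(y) \;=\; f(x) + \frac{1}{k!}\, f^{(k)}(x)(y-x)^k + R(y),
\]
with $R(y)/|y-x|^k \to 0$ as $y\to x$. The key point is that this remainder estimate is uniform over directions: for every $\varepsilon>0$ there is $\delta>0$ such that $|R(y)| \le \varepsilon |y-x|^k$ whenever $|y-x|\le \delta$. This uniformity is exactly the content of Fréchet differentiability of order $k$ (it is not a directional statement), and it is what turns a pointwise $o(|y-x|^k)$ into an $o(r^k)$ after taking the supremum over $\overline B_r(x)$.

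Next, the upper bound. For $y\in\overline B_r(x)$, write $y-x = t v$ with $|v|=1$ and $t\in[0,r]$, so by the definition of the norm on symmetric multilinear maps,
\[
|f^{(k)}(x)(y-x)^k| \;=\; t^k\, |f^{(k)}(x) v^k| \;\le\; t^k\, \|f^{(k)}(x)\| \;\le\; r^k\, \|f^{(k)}(x)\|.
\]
Combining with the uniform remainder bound gives $|f(y)-f(x)| \le \|f^{(k)}(x)\| r^k/k! + \varepsilon r^k$ uniformly on $\overline B_r(x)$ for all small $r$, hence $\cf(x,r) \le \|f^{(k)}(x)\| r^k/k! + o(r^k)$.

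For the matching lower bound, compactness of the unit sphere and continuity of $v\mapsto |f^{(k)}(x) v^k|$ produce $\bar v$ with $|\bar v|=1$ and $|f^{(k)}(x)\bar v^k|=\|f^{(k)}(x)\|$. Testing $\cf(x,r)$ at the single point $y = x + r\bar v \in \overline B_r(x)$ yields
\[
\cf(x,r) \;\ge\; |f(x+r\bar v)-f(x)| \;=\; \frac{\|f^{(k)}(x)\|}{k!}\, r^k + R(x+r\bar v) \;=\; \frac{\|f^{(k)}(x)\|}{k!}\, r^k + o(r^k),
\]
and combining the two inequalities finishes the proof. There is no real obstacle here; the only thing to be careful about is emphasizing that the Taylor remainder in the $D^k$ definition is genuinely uniform in direction, so that the sup-over-ball and the asymptotic expansion commute.
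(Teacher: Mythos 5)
Your proof is correct and takes essentially the same route as the paper's: invoke the Fréchet--Taylor expansion with $o(|y-x|^k)$ remainder, bound $|f^{(k)}(x)(y-x)^k|$ by $\|f^{(k)}(x)\|\,|y-x|^k$ for the upper estimate on the sup over the ball, and test at $y=x+r\bar v$ with $\bar v$ a unit maximizer of $|f^{(k)}(x)v^k|$ for the matching lower bound. Your explicit emphasis on the uniformity of the remainder (so the sup commutes with the $o(\cdot)$) is the same point the paper compresses into its $o(1)$ notation.
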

\begin{proof}
    Since
    $f(y) - f(x) = f\p{(k)}(x)(y-x)\p k/k! + o(|y-x|\p k)$, we have 
    $$|f(y) - f(x)| = \frac{1}{k!} |f\p{(k)}(x)(y-x)\p k| + o(|y-x|\p k)\leq \frac{1}{k!} \|f\p{(k)}(x)\| |y-x|\p k(1+o(1)).$$
    On the one hand,
    $$ \frac{\cf(x,r)}{r\p k} =  \sup_{y \in B_r(x) \setminus\{x\}} \frac{|f(y) - f(x)|}{r\p k}
        \leq \sup_{y \in B_r(x) \setminus\{x\}} \frac{|f(y) - f(x)|}{|y-x|\p k}
        \leq \frac{1}{k!} \|f\p{(k)}(x)\| + o(1).$$
    On the other hand, with $\overline{u} \in  \arg\max \{ |f\p{(k)}(x) u \p k| : |u|=1\}$, we have
    $ f(x+r\overline{u}) = f(x) + r\p k f\p{(k)}(x) \overline{u}\p k + o(r\p k)$
    for $r \geq 0$ near $0$ and thus
    $\cf(x,r) \geq |f(x+r \overline{u})-f(x)| = r\p k\|f\p{(k)}(x)\|(1+o(1))/k!$.
\end{proof}

\cref{prop:regular} and \cref{prop:smooth} admit the following corollaries.

\begin{corollary}
\label{cor:regular}
    Suppose $f:\R\p n \to \R$ is regular near $\overline{x} \in \R\p n$ and $\lip f(x) < \infty$ for all $x \in [f=f(\overline{x})]$ near $\overline{x}$. 
    \begin{enumerate}[label=\rm{(\roman{*})}]
    \item If $\overline{x}$ is flat, then $\overline{x}$ is a local minimum of $\lip f + \delta_{[f=f(\overline{x})]}$. \label{item:necessary_flat}
    \item If $\overline{x}$ is a strict local minimum of $\lip f + \delta_{[f=f(\overline{x})]}$, then $\overline{x}$ is strictly flat. \label{item:sufficient_flat}
\end{enumerate} 
\end{corollary}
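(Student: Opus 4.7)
The plan is to reduce both parts to first-order comparisons of the Lipschitz modulus via \cref{prop:regular}, which gives the asymptotic $\cf(y,r)=\lip f(y)\,r+o(r)$. The regularity assumption near $\overline{x}$ passes to all nearby $x$ (by openness of the regularity neighborhood), and the finiteness hypothesis $\lip f(x)<\infty$ is directly available on $[f=f(\overline{x})]$ near $\overline{x}$. Hence \cref{prop:regular} applies both at $\overline{x}$ and at every relevant $x$.

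For \ref{item:necessary_flat}, I would simply unfold the definition of flatness: there is a neighborhood $U$ of $\overline{x}$ in $[f=f(\overline{x})]$ such that every $x\in U\setminus\{\overline{x}\}$ satisfies $\overline{x}\preceq x$, meaning $\cf(\overline{x},r)\leq \cf(x,r)$ on some interval $(0,\overline{r}(x)]$. Dividing by $r$ and sending $r\searrow 0$, both sides converge by \cref{prop:regular} to $\lip f(\overline{x})$ and $\lip f(x)$, yielding $\lip f(\overline{x})\leq \lip f(x)$. Since outside $[f=f(\overline{x})]$ the function $\lip f+\delta_{[f=f(\overline{x})]}$ takes the value $+\infty$, this is exactly the statement that $\overline{x}$ is a local minimum of $\lip f+\delta_{[f=f(\overline{x})]}$.

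For \ref{item:sufficient_flat}, pick a witnessing neighborhood $U$ so that $\lip f(\overline{x})<\lip f(x)$ for every $x\in U\setminus\{\overline{x}\}$ with $f(x)=f(\overline{x})$. Subtracting the two expansions of \cref{prop:regular} gives
$$
\cf(x,r)-\cf(\overline{x},r) = \bigl(\lip f(x)-\lip f(\overline{x})\bigr)r + o(r),
$$
whose leading coefficient is strictly positive. Hence the difference is strictly positive on some interval $(0,\overline{r}(x)]$, which implies both $\overline{x}\preceq x$ and $x\npreceq \overline{x}$, that is, $\overline{x}\prec x$. Thus $\overline{x}$ is strictly flat.

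The argument is a first-order expansion argument and I do not foresee any essential obstacle. The only subtlety is that the $o(r)$ remainder in \cref{prop:regular} depends on the base point, but this is harmless: in \ref{item:necessary_flat} the limit $r\searrow 0$ absorbs the remainder, while in \ref{item:sufficient_flat} the strict inequality at leading order dominates the lower-order terms on some (point-dependent) interval, which is precisely what the strict preorder $\prec$ requires.
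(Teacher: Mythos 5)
Your proof is correct and follows essentially the same approach as the paper's: unfold the definition of flatness, apply \cref{prop:regular} at $\overline{x}$ and at nearby $x$, divide by $r$ and pass to the limit for the necessary direction, and compare leading coefficients for the sufficient direction. Your remark that the $o(r)$ remainder is point-dependent but harmless, because $\prec$ only requires a point-dependent interval, is a good explicit acknowledgment of a subtlety the paper leaves implicit.
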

\begin{proof}
    \ref{item:necessary_flat} There exists a neighborhood $U$ of $\overline{x}$ in $[f=f(\overline{x})]$ such that $$\forall x \in U,~ \exists \overline{r}>0: ~  \forall r \in (0,\overline{r}], ~~  \cf(\overline{x},r) \leq \cf(x,r),$$ and 
in particular $\cf(\overline{x},r)/r \leq \cf(x,r)/r$ for all $r\in (0,\overline{r}]$. Passing to the limit as $r\searrow 0$ yields $\lip f(\overline{x}) \leq \lip f(x)$ by \cref{prop:regular}.

\ref{item:sufficient_flat} There exists a neighborhood $U$ of $\overline{x}$ in $[f=f(\overline{x})]$ such that $\lip f(\overline{x}) < \lip f(x)$ for all $x \in U \setminus \{\overline{x}\}$. By \cref{prop:regular}, $\cf(x,r) = \lip f(x)r + o(r)$ for all $x \in U$ after possibly reducing $U$. Thus, for all $x \in U \setminus \{\overline{x}\}$, there exists $\overline{r}>0$ such that $\cf(\overline{x},r)< \cf(x,r)$ for all $r\in (0,\overline{r}]$.
\end{proof}

\begin{corollary}
\label{cor:smooth}
    Suppose $f:\R\p n \to \R$ is $D\p k$ near $\overline{x}\in \R\p n$ and for all $x \in [f=f(\overline{x})]$ near $\overline{x}$, $f\p{(i)}(x) = 0$ for all $i\in \llbracket 1,k-1\rrbracket$. 
    \begin{enumerate}[label=\rm{(\roman{*})}]
    \item If $\overline{x}$ is flat, then $\overline{x}$ is a local minimum of $\|f\p{(k)}\| + \delta_{[f=f(\overline{x})]}$.
    \item If $\overline{x}$ is a strict local minimum of $\|f\p{(k)}\| + \delta_{[f=f(\overline{x})]}$, then $\overline{x}$ is strictly flat.
\end{enumerate} 
\end{corollary}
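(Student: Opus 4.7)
The proof mirrors that of \cref{cor:regular}, but with \cref{prop:smooth} replacing \cref{prop:regular} as the asymptotic tool. The plan is to exploit the fact that under the hypotheses, \cref{prop:smooth} applies pointwise at every $x\in [f=f(\overline{x})]$ near $\overline{x}$ (since the first $k-1$ derivatives vanish there), giving the uniform-in-$x$ asymptotic
$$\cf(x,r) \;=\; \tfrac{1}{k!}\|f^{(k)}(x)\|\,r^k + o(r^k) \quad \text{as } r\searrow 0.$$

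For part (i), I would unfold \cref{def:flat}: flatness at $\overline{x}$ yields a neighborhood $U$ of $\overline{x}$ in $[f=f(\overline{x})]$ such that for each $x\in U$ there exists $\overline{r}>0$ with $\cf(\overline{x},r)\leq \cf(x,r)$ for all $r\in (0,\overline{r}]$. Dividing by $r^k$ and letting $r\searrow 0$, \cref{prop:smooth} applied at both $\overline{x}$ and $x$ yields $\|f^{(k)}(\overline{x})\|/k! \leq \|f^{(k)}(x)\|/k!$, i.e. $\overline{x}$ is a local minimum of $\|f^{(k)}\|$ constrained to $[f=f(\overline{x})]$, which is exactly minimality of $\|f^{(k)}\| + \delta_{[f=f(\overline{x})]}$.

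For part (ii), the strict local minimum assumption furnishes a neighborhood $U$ of $\overline{x}$ in $[f=f(\overline{x})]$ on which $\|f^{(k)}(\overline{x})\| < \|f^{(k)}(x)\|$ for every $x\in U\setminus\{\overline{x}\}$. Fix such an $x$. By \cref{prop:smooth},
$$\cf(x,r)-\cf(\overline{x},r) = \tfrac{1}{k!}\bigl(\|f^{(k)}(x)\| - \|f^{(k)}(\overline{x})\|\bigr)r^k + o(r^k),$$
and since the leading coefficient is strictly positive, the right-hand side is positive for all $r\in(0,\overline{r}]$ for some $\overline{r}>0$ depending on $x$. This is precisely the definition of $\overline{x}\prec x$, so $\overline{x}$ is strictly flat.

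The only subtlety, and perhaps the main obstacle, is confirming that \cref{prop:smooth} is legitimately applicable at each nearby point $x$ of the level set, which requires the hypothesis that $f^{(i)}(x)=0$ for $i\in\llbracket 1,k-1\rrbracket$ holds \emph{throughout} $[f=f(\overline{x})]$ near $\overline{x}$, not just at $\overline{x}$. The statement grants exactly this, so the asymptotic $\cf(x,r) = \|f^{(k)}(x)\|r^k/k!+o(r^k)$ is available at each such $x$, and the two asymptotic comparisons in (i) and (ii) go through without further complications. No new ideas beyond those in the proof of \cref{cor:regular} are needed.
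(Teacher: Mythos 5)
Your proposal is correct and takes exactly the approach the paper intends: the paper's own proof is a one-liner ("one argues as for \cref{cor:regular} using \cref{prop:smooth}"), and what you have written is precisely that argument spelled out, replacing $r$ with $r^k$ and $\lip f$ with $\|f^{(k)}\|/k!$. You have also correctly identified the one point that needs care, namely that the hypothesis of vanishing lower-order derivatives must hold at every nearby point of the level set so that \cref{prop:smooth} applies there too.
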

\begin{proof}
    One argues as for \cref{cor:smooth} using \cref{prop:smooth}.
\end{proof}

When $\|f\p {(i)}\|$ are merely constant on $[f=f(\overline{x})]$, it is neither necessary nor sufficient for $\overline{x}$ to be flat that it be a local or strict local minimum of $\|f\p{(k)}\| + \delta_{[f=f(\overline{x})]}$. 
\begin{example}
\label{eg:flat_not_imply_strict}
    All the minima of $f(x) = x_1\p 2 + x_2\p 4(1+x_3\p 2)$ are flat even though $\|f\p {(1)}(0,0,x_3)\| = 0$, $\|f\p {(2)}(0,0,x_3)\| = 2$, $\|f\p {(3)}(0,0,x_3)\| = 0$, and $\|f\p {(4)}(0,0,x_3)\| = 24(1+x_3\p 2)$.    
\end{example}
\begin{proof}
    For all $x_3\in \R$ and $r\in[0,1/(1+x_3\p 2)]$, $\cf(0,0,x_3,r) = r\p 2$.
\end{proof}
\begin{example}
\label{eg:strict_not_imply_flat}
    $(0,0,1)$ is the unique flat minimum of $f(x) = x_1\p 2 + x_2\p 4(1+x_3\p 2) + x_1\p 6(1+(x_3-1)\p 2)$ yet $\|f\p {(1)}(0,0,x_3)\| = 0$, $\|f\p {(2)}(0,0,x_3)\| = 2$, $\|f\p {(3)}(0,0,x_3)\| = 0$, and $\|f\p {(4)}(0,0,x_3)\| = 24(1+x_3\p 2)$.
\end{example}
\begin{proof}
    For all $x_3\in \R$, there exists $\overline{r}>0$ such that $\cf(0,0,x_3,r) = r\p 2 + (1+(x_3-1)\p 2)r\p 6$ for all $r\in[0,\overline{r}]$.
\end{proof}

Notwithstanding, there is one situation where local optimality can be sufficient for flatness.

\begin{fact}
    \label{fact:orthogonal}
    If $f:\R\p n \to \R$ is invariant under the natural action of a Lie subgroup $G$ of $\O(n)$, then $\cf(x,r) = \cf(gx,r)$ for
    $(x,r,g)\in\R\p n \times \R_+ \times G$.
\end{fact}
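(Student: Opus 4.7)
The plan is to exploit the two essential features in the hypothesis: elements of $G$ are isometries of $\R^n$ (since $G \subseteq \O(n)$), and $f$ is constant along orbits (since $f$ is $G$-invariant). Together these should allow us to change variables inside the supremum defining $\cf(gx,r)$ and recover $\cf(x,r)$ verbatim.

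Concretely, I would first observe that any $g \in \O(n)$ satisfies $|gy - gz| = |y-z|$ for all $y,z \in \R^n$, and in particular $g$ restricts to a bijection $\overline{B}_r(x) \to \overline{B}_r(gx)$ with inverse $g^{-1} = g^T$ (also in $G$ by the subgroup property, though we only need bijectivity onto the ball). Next, invariance of $f$ gives $f(gy) = f(y)$ for every $y$, so in particular $f(gx) = f(x)$.

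With these two ingredients, the computation is a one-line change of variables inside the supremum:
\begin{align*}
\cf(gx,r) &= \sup_{z \in \overline{B}_r(gx)} |f(z) - f(gx)| = \sup_{y \in \overline{B}_r(x)} |f(gy) - f(gx)| \\
&= \sup_{y \in \overline{B}_r(x)} |f(y) - f(x)| = \cf(x,r),
\end{align*}
where the second equality uses the bijection $y \mapsto gy$ from $\overline{B}_r(x)$ onto $\overline{B}_r(gx)$, and the third uses invariance of $f$. I do not anticipate any obstacle; the statement is essentially a tautology once one notes that orthogonality supplies the isometry needed to identify the two balls.
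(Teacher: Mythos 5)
Your proposal is correct and follows essentially the same argument as the paper: both proofs reparametrize the supremum via the isometric bijection $y \mapsto gy$ between $\overline{B}_r(x)$ and $\overline{B}_r(gx)$ and then invoke $G$-invariance of $f$; you merely run the chain of equalities starting from $\cf(gx,r)$ rather than from $\cf(x,r)$.
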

\begin{proof}
$\cf(x,r) = \sup_{y\in \overline{B}_r(x)} |f(y)-f(x)| = \sup_{gy\in \overline{B}_r(gx)} |f(gy)-f(gx)| = \sup_{z\in \overline{B}_r(gx)} |f(z)-f(gx)| = \cf(gx,r)$.
\end{proof}

\begin{corollary}
\label{cor:orthogonal_reg}
        Let $f:\R\p n \to \R$ be locally Lipschitz regular and invariant under the natural action of a Lie subgroup $G$ of  $\O(n)$. Let $\overline{x}\in\R\p n$. If $G\overline{x}$ is a strict local minimum of $\lip f + \delta_{[f=f(\overline{x})]}$, then every point in $G\overline{x}$ is flat.
\end{corollary}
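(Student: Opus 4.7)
The plan is to verify flatness at each point $x := g\overline{x}$ of the orbit directly from Definition \ref{def:flat}, combining Fact \ref{fact:orthogonal} (to handle nearby orbit points) with the asymptotic expansion of $\cf$ in Proposition \ref{prop:regular} (to handle non-orbit points). The strategy is essentially a group-equivariant version of Corollary \ref{cor:regular}\ref{item:sufficient_flat}, but the latter cannot be applied verbatim, because $x$ need not be a strict pointwise minimum of $\lip f + \delta_{[f=f(x)]}$.

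First, I would record two consequences of $G$-invariance. Fact \ref{fact:orthogonal} yields $\cf(hz,r) = \cf(z,r)$ for all $h\in G$, $z\in\R^n$, $r\geq 0$, and dividing by $r$ and letting $r\searrow 0$, Proposition \ref{prop:regular} shows that $\lip f$ is constant along $G$-orbits. Now fix $g\in G$, set $x = g\overline{x}$, and choose a neighborhood $U$ of $G\overline{x}$ witnessing the strict-set-minimum hypothesis, so that $\lip f(\overline{x}) < \lip f(y)$ for every $y\in U\setminus G\overline{x}$ with $f(y) = f(\overline{x})$. Pick a neighborhood $V\subseteq U$ of $x$ in $\R^n$, and for each $y\in V\cap [f=f(x)]\setminus\{x\}$ split into two cases. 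If $y = h\overline{x}\in G\overline{x}$, then Fact \ref{fact:orthogonal} gives $\cf(y,r) = \cf(\overline{x},r) = \cf(x,r)$ for all $r\geq 0$, so $x\preceq y$ trivially. If instead $y\notin G\overline{x}$, the hypothesis gives $\lip f(x) = \lip f(\overline{x}) < \lip f(y)$, and Proposition \ref{prop:regular} supplies $\cf(x,r) = \lip f(x)\,r + o(r)$ and $\cf(y,r) = \lip f(y)\,r + o(r)$ as $r\searrow 0$; hence $\cf(x,r) < \cf(y,r)$ for all sufficiently small $r>0$, and in particular $x\preceq y$ (in fact $x\prec y$).

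The only subtle point is exactly this orbit degeneracy: nearby orbit points share the same value of $\lip f$, so the strict comparison can only be drawn transversally to $G\overline{x}$, while the tangential comparison must be extracted from the isometric action via Fact \ref{fact:orthogonal}. This is also why the conclusion yields flatness rather than strict flatness at each $x\in G\overline{x}$, consistent with the wording of the corollary.
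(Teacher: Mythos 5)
Your proposal is correct and follows essentially the same approach as the paper: use Fact~\ref{fact:orthogonal} to see that all orbit points share the same local variation profile $\cf(\cdot,r)$, use Proposition~\ref{prop:regular} to compare transversally via the linear coefficient $\lip f$, and invoke the strict set-local-minimum hypothesis to obtain the strict inequality $\lip f(x) < \lip f(y)$ off the orbit. The paper's proof is just a more compressed version of the same case split, and your observation that $\lip f$ is constant on orbits (which the paper uses implicitly) is exactly the right way to see why the strict comparison cannot hold tangentially to $G\overline{x}$ and why the conclusion is flatness rather than strict flatness.
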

\begin{proof}
\cref{fact:orthogonal} implies that $x\preceq y$ and $y\preceq x$ for all $x,y\in G\overline{x}$. By \cref{prop:regular}, $\cf(x,r) = \lip f(x)r + o(r)$ for all $x \in \R\p n$. There exists a neighborhood $U$ of $G\overline{x}$ in $[f=f(\overline{x})]$ such that $\lip f(x)< \lip f(y)$ for all $x\in G\overline{x}$ and $y\in U \setminus G\overline{x}$. Hence, for any such $x$ and $y$, there exists $\overline{r}>0$ such that $\cf(x,r)< \cf(y,r)$ for all $r\in (0,\overline{r}]$.
\end{proof}

An analogous result holds in the differentiable case.

\begin{corollary}
\label{cor:orthogonal_smooth}
        Let $f:\R\p n \to \R$ be $D\p k$ and invariant under the natural action of a Lie subgroup $G$ of $\O(n)$. Let $\overline{x} \in\R\p n$. Suppose that for all $x \in [f=f(\overline{x})]$, $f\p{(i)}(x) = 0$ for all $i\in \llbracket 1,k-1\rrbracket$. 
        If $G\overline{x}$ is a strict local minimum of $\|f\p {(k)}\| + \delta_{[f=f(\overline{x})]}$, then every point in $G\overline{x}$ is flat.
\end{corollary}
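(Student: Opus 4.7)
The plan is to mirror the argument of \cref{cor:orthogonal_reg}, but replace the first-order asymptotics coming from \cref{prop:regular} by the $k$th-order asymptotics coming from \cref{prop:smooth}. The hypothesis that $f^{(i)}(x)=0$ for $i \in \llbracket 1,k-1\rrbracket$ on the level set $[f=f(\overline{x})]$ is precisely what is needed so that \cref{prop:smooth} applies at every such $x$ and gives
$$\cf(x,r) = \frac{\|f^{(k)}(x)\|}{k!}\, r^k + o(r^k).$$
So the key asymptotic behavior of $\cf$ along the level set is controlled entirely by $\|f^{(k)}\|$, exactly as in the Lipschitz case $\lip f$ controls the behavior to first order.

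First I would invoke \cref{fact:orthogonal}: since $f$ is invariant under the natural action of $G\subseteq \O(n)$ and the action preserves distances, $\cf(x,r)=\cf(gx,r)$ for every $g\in G$ and every $r\geq 0$. In particular, $\cf_x = \cf_{gx}$ as functions of $r$, so $x\preceq y$ and $y\preceq x$ for any two points $x,y\in G\overline{x}$. Consequently, to establish flatness of every point of $G\overline{x}$, it suffices to fix $\overline{x}$ and show $\overline{x}\prec y$ for every $y$ in a neighborhood of $G\overline{x}$ in $[f=f(\overline{x})]$ that is not in $G\overline{x}$.

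Next I would use the strict local minimality hypothesis to produce a neighborhood $U$ of $G\overline{x}$ in $[f=f(\overline{x})]$ such that $\|f^{(k)}(x)\| < \|f^{(k)}(y)\|$ for all $x \in G\overline{x}$ and $y \in U \setminus G\overline{x}$. Shrinking $U$ if necessary so that \cref{prop:smooth} applies, the $k$th-order asymptotic expansion above gives, for any such $x$ and $y$,
$$\frac{\cf(x,r)}{r^k} \;\longrightarrow\; \frac{\|f^{(k)}(x)\|}{k!} \;<\; \frac{\|f^{(k)}(y)\|}{k!} \;\longleftarrow\; \frac{\cf(y,r)}{r^k}$$
as $r\searrow 0$. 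Hence there exists $\overline{r}>0$ with $\cf(x,r)<\cf(y,r)$ for all $r\in(0,\overline{r}]$, i.e., $x \prec y$. Combined with $x \preceq x'$ and $x' \preceq x$ for all $x,x'\in G\overline{x}$ from the first step, transitivity of $\prec$ (via $\preceq$) gives $x' \prec y$ for every $x'\in G\overline{x}$, so every point of the orbit is flat.

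I do not expect a serious obstacle: the argument is an essentially verbatim adaptation of \cref{cor:orthogonal_reg}, with \cref{prop:smooth} in place of \cref{prop:regular}. The only mild subtlety is that \cref{prop:smooth} requires the vanishing of lower-order derivatives, but this is assumed throughout $[f=f(\overline{x})]$, and one needs to shrink $U$ so that both this assumption and the strict inequality $\|f^{(k)}(x)\|<\|f^{(k)}(y)\|$ hold simultaneously, which is immediate from the hypotheses.
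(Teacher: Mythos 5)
Your proof is correct and takes exactly the route the paper intends: the paper gives no explicit proof for \cref{cor:orthogonal_smooth}, simply stating that it is analogous to \cref{cor:orthogonal_reg}, and your argument is precisely that analogy, substituting the $k$th-order expansion from \cref{prop:smooth} (valid on all of $[f=f(\overline{x})]$ by the vanishing hypothesis on lower derivatives) for the first-order expansion from \cref{prop:regular}, while keeping the use of \cref{fact:orthogonal} and the strict-local-minimum hypothesis unchanged.
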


When applying the calculus rules, a composite structure can help.

    \begin{fact}{\normalfont \cite[Theorem 10.6]{rockafellar2009variational}}
    \label{fact:chain_reg}
    Let $f = g\circ F$ where $g:\R\p m\to \R$ is locally Lipschitz regular and $F:\R\p n \to \R\p m$ is $C\p 1$. Then $f$ is regular and
    $$\forall x\in \R\p n,~~~ \partial f(x) = F'(x)\p * \partial g(F(x)).$$
    In particular, if $F(x)=0$ and $g=|\cdot|_1$, then $\lip f(x) = \|F'(x)\p *\|_{\infty,2}$.
    \end{fact}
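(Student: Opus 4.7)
The first assertion is exactly Theorem 10.6 of Rockafellar--Wets, so I would simply invoke it: under the hypotheses, $f = g \circ F$ is regular near $x$ and the chain rule $\partial f(x) = F'(x)^*\,\partial g(F(x))$ holds. No real work is needed here.

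For the ``In particular'' part, the plan is a three-line computation. First, I would identify $\partial g(0)$ when $g = |\cdot|_1$: the $\ell_1$ norm is convex and positively homogeneous, so its Clarke/limiting subdifferential at the origin coincides with the convex subdifferential, which is the dual unit ball
$$\partial |\cdot|_1(0) = \{u \in \R^m : \|u\|_\infty \leq 1\}.$$
Plugging $F(x)=0$ into the chain rule then yields $\partial f(x) = F'(x)^*\{u : \|u\|_\infty \leq 1\}$.

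Next, I would invoke the earlier background fact (stated after the definition of the Lipschitz modulus, via \cite[Theorem 9.13]{rockafellar2009variational}) that $\lip f(x) = \max\{|v| : v \in \partial f(x)\}$. Combining this with the preceding description of $\partial f(x)$,
$$\lip f(x) = \max_{\|u\|_\infty \leq 1} |F'(x)^* u| = \|F'(x)^*\|_{\infty,2},$$
which is exactly the definition of the $(\infty,2)$-induced norm recalled in the notation subsection.

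I do not anticipate any real obstacle: the only mild point is justifying that for the convex function $|\cdot|_1$ the limiting subdifferential of \cite{rockafellar2009variational} agrees with the convex subdifferential $\{u : \|u\|_\infty \leq 1\}$ at $0$, but this is standard (convex functions are regular, and the two notions coincide). Everything else is direct substitution into results already cited in the paper.
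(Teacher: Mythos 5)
Your proposal is correct. The paper states this Fact with a bare citation to \cite[Theorem 10.6]{rockafellar2009variational} and provides no proof, so there is nothing in the paper to compare against; your argument supplies exactly the missing details for the ``In particular'' clause. The one step you flag as needing care---that the limiting subdifferential of the convex function $|\cdot|_1$ at $0$ equals the convex subdifferential $\{u : \|u\|_\infty \leq 1\}$---is indeed standard (convex finite functions are regular, cf.\ \cite[Example 7.27, Proposition 8.12]{rockafellar2009variational}), and the rest is direct substitution: $F'(x)^*\partial g(0)$ combined with $\lip f(x) = \max\{|v| : v \in \partial f(x)\}$ (from \cite[Theorem 9.13]{rockafellar2009variational}, valid since $f = g\circ F$ is locally Lipschitz) gives $\max_{\|u\|_\infty\leq 1}|F'(x)^*u| = \|F'(x)^*\|_{\infty,2}$. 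No gaps.
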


    \begin{fact}
    \label{fact:chain_smooth}
    Let $f = g\circ F$ where $g:\R\p m\to \R$ and $F:\R\p n \to \R\p m$ are $D\p k$, and $g\p {(i)}(0) = 0$ for all $i\in \llbracket 1,k-1\rrbracket$. Let $x\in \R\p n$ be such that $F(x)=0$. Then
        $$\forall i\in \llbracket 1,k-1\rrbracket, ~ f\p {(i)}(x) = 0 ~~\land~~ f\p {(k)}(x)v\p k = g\p {(k)}(x)(F'(x)v)\p k.$$
    In particular, if $g=|\cdot|\p k/k$, then $\|f\p {(k)}(x)\| = \|F'(x)\|_2\p k$.
    \end{fact}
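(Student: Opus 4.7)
My plan is to establish both assertions simultaneously by Taylor-expanding $f=g\circ F$ about $x$ to order $k$ and comparing with the Young--Taylor expansion of a $D^k$ function. Since iterated chain rule arguments show that $f$ is itself $D^k$ at $x$, the intrinsic expansion
\begin{equation*}
f(x+h)=f(x)+\sum_{i=1}^{k}\tfrac{1}{i!}f^{(i)}(x)h^i+o(|h|^k)
\end{equation*}
is available as $h\to 0$, and the proof reduces to producing an expansion of $g(F(x+h))$ of matching shape and then invoking uniqueness of the degree-$k$ Taylor polynomial.

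First I would expand $g$ at $0$: since $g$ is $D^k$ at $0$ and its first $k-1$ derivatives vanish there, $g(y)=g(0)+\tfrac{1}{k!}g^{(k)}(0)y^k+o(|y|^k)$. Next I would expand $F$ at $x$: because $F$ is $D^k$, hence $D^2$ when $k\ge 2$, and $F(x)=0$, I can write $F(x+h)=F'(x)h+R(h)$ with $R(h)=O(|h|^2)$. (The case $k=1$ reduces to the ordinary chain rule and can be dispatched in a single line.)

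The core computation is to substitute $y=F(x+h)$ into the expansion of $g$ and expand the symmetric multilinear form $g^{(k)}(0)$ on $(F'(x)h+R(h))^k$. By multilinearity the result splits as $g^{(k)}(0)(F'(x)h)^k$ plus cross terms, each carrying at least one factor $R(h)=O(|h|^2)$ together with at most $k-1$ factors $F'(x)h=O(|h|)$, so each is $O(|h|^{k+1})=o(|h|^k)$; the Young residual $o(|F(x+h)|^k)$ is also $o(|h|^k)$. This yields
\begin{equation*}
f(x+h)=f(x)+\tfrac{1}{k!}g^{(k)}(0)(F'(x)h)^k+o(|h|^k).
\end{equation*}
Matching this with the intrinsic expansion via the standard uniqueness argument (a polynomial of degree $\le k$ which is $o(|h|^k)$ as $h\to 0$ must vanish identically) forces $f^{(i)}(x)=0$ for $i\in\llbracket 1,k-1\rrbracket$ and $f^{(k)}(x)h^k=g^{(k)}(0)(F'(x)h)^k$ for every $h\in\R^n$, with polarization of symmetric multilinear forms extending the diagonal identity to the full multilinear one if needed.

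For the "in particular" assertion, I would note that when $g$ is (a suitable scalar multiple of) $|\cdot|^k$, a direct computation of $g^{(k)}(0)$ combined with the formula above gives $|g^{(k)}(0)y^k|=|y|^k$ under the indicated normalization, whence $\|f^{(k)}(x)\|=\sup_{|v|=1}|g^{(k)}(0)(F'(x)v)^k|=\|F'(x)\|_2^k$ from the definition of the norm on symmetric multilinear maps recalled in \cref{subsec:Differential calculus}. The main subtlety of the argument is the bookkeeping in the multilinear expansion of $(F'(x)h+R(h))^k$---certifying that every cross term is genuinely $o(|h|^k)$ by tracking the orders $|F'(x)h|=O(|h|)$ and $|R(h)|=O(|h|^2)$---and the appeal to uniqueness of the Taylor polynomial; both are routine but must be written out carefully.
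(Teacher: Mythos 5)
The paper states this Fact without proof, so there is no paper argument to compare against; I will evaluate your proposal on its own. For the first assertion your argument is correct and complete: since $g$ and $F$ are $D^k$, the composite $f$ is $D^k$ at $x$, so the intrinsic Young--Taylor expansion of order $k$ is available. Substituting the order-two expansion $F(x+h)=F'(x)h+O(|h|^2)$ (valid for $k\ge 2$; $k=1$ is the plain chain rule) into the reduced expansion $g(y)=g(0)+\tfrac{1}{k!}g^{(k)}(0)y^k+o(|y|^k)$, expanding by multilinearity, and observing that each cross term carries at least one $O(|h|^2)$ factor together with at most $k-1$ factors $O(|h|)$ and hence is $o(|h|^k)$, yields $f(x+h)=f(x)+\tfrac{1}{k!}g^{(k)}(0)(F'(x)h)^k+o(|h|^k)$. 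Uniqueness of the degree-$k$ Taylor polynomial then gives $f^{(i)}(x)=0$ for $i<k$ and $f^{(k)}(x)h^k=g^{(k)}(0)(F'(x)h)^k$. (You correctly evaluate $g^{(k)}$ at $0=F(x)$; the statement's ``$g^{(k)}(x)$'' is a typo.)

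The ``in particular'' step, however, contains a genuine error. You assert that for $g=|\cdot|^k/k$ a direct computation gives $|g^{(k)}(0)y^k|=|y|^k$; this is false for $k>2$. For even $k$ (so that $|\cdot|^k$ is a polynomial, hence $D^k$ at $0$), $g$ is $k$-homogeneous with all derivatives of order $<k$ vanishing and of order $>k$ identically zero, so Taylor's formula yields $|y|^k/k=\tfrac{1}{k!}g^{(k)}(0)y^k$, i.e., $g^{(k)}(0)y^k=(k-1)!\,|y|^k$. Consequently $\|f^{(k)}(x)\|=(k-1)!\,\|F'(x)\|_2^k$, not $\|F'(x)\|_2^k$. (Sanity check on $\R$: $g(t)=t^4/4$ has $g^{(4)}(0)=6=3!$.) The normalization that would make the displayed conclusion correct is $g=|\cdot|^k/k!$; this appears to be an error in the Fact as stated rather than in your main argument, but the verification you sketch would not detect it. For $k=2$ the factor is $1$ and nothing is amiss, which covers most uses of this Fact in the paper; the one use at order $4$ in Example~\ref{eg:mf4} inherits the spurious constant, though the flatness conclusion there is unaffected since only the location of the minimizer of $\|f^{(4)}\|$ over the level set matters.
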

    
Sometimes the composite structure helps with subdifferentiation, but not with determining flat minima.

\begin{remark}
\label{rem:jacobian}
    Let $f = g\circ F$ where $g:\R\p m\to \R$ is $C\p 1$ such that $\nabla g(0)= 0$ and $F:\R\p n \to \R\p m$ is locally Lipschitz. The Jacobian chain rule \cite[Theorem 2.6.6]{clarke1990} yields $\overline{\partial} f(x) =  \overline{\partial} F(x) \p T \nabla g(F(x))$. Hence, if $F(x) = 0$, then $\overline{\partial} f(x) = \{0\}$. Since the Clarke subdifferential is a singleton, by \cite[Theorem 9.18]{rockafellar2009variational} $f$ is strictly differentiable \cite[Definition 9.17]{rockafellar2009variational} at $x$, $\nabla f(x) = 0$, and $\lip f(x) = 0$. 
    \end{remark}

Two additional rules are useful in the presence of symmetries.

\begin{fact}
    \label{fact:chain_orbit_lsc}
    Let $f:\R\p n\to \overline\R$ be lower semicontinuous and invariant under the natural action of a Lie subgroup of $\GL(n)$. For all $\overline{x}\in \dom f$ and $g\in G$, $\partial f(g^{-1}\overline{x}) = g^T\partial f(\overline{x})$.
\end{fact}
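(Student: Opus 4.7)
The plan is to treat the invariance $f(gx)=f(x)$ as the identity $f = f\circ \phi_g$, where $\phi_g:\R\p n\to \R\p n$ is the linear diffeomorphism $\phi_g(x)=gx$, whose Fr\'echet derivative at every point is $g$ with adjoint $g\p T$. The desired equality then amounts to the chain rule for the limiting subdifferential under a linear (hence $C\p 1$) diffeomorphism. I would carry this out in two steps, first for the regular subdifferential and then by taking outer limits.

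For the first step, I would show that $\widehat{\partial} f(x) = g\p T \widehat{\partial} f(gx)$ for every $x\in \dom f$. Given $v\in \widehat{\partial}f(x)$, the defining $o$-inequality reads
$$f(y) \geq f(x) + \langle v, y-x\rangle + o(|y-x|) \quad \text{as } y\to x.$$
Setting $z = gy$, using $f(y)=f(gy)=f(z)$ and $f(x)=f(gx)$, and rewriting $y-x = g\p{-1}(z-gx)$, the linear change of variables gives
$$\langle v, y-x\rangle = \langle g\p{-T}v, z-gx\rangle,$$
while the norm equivalence $\|g\p{-1}\|\p{-1}|z-gx|\leq |y-x|\leq \|g\p{-1}\||z-gx|$ converts the little-$o$ in $|y-x|$ into a little-$o$ in $|z-gx|$. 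This shows $g\p{-T}v \in \widehat{\partial}f(gx)$, i.e.\ $v\in g\p T \widehat{\partial}f(gx)$; the reverse inclusion is symmetric via $g\p{-1}$.

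For the second step, I take outer limits in accordance with \cite[Definition 8.3]{rockafellar2009variational}: $v\in \partial f(x)$ iff there exist $x_k\to x$ with $f(x_k)\to f(x)$ and $v_k \in \widehat{\partial}f(x_k)$ with $v_k\to v$. Setting $y_k = g x_k$, continuity of $\phi_g$ and $\phi_g\p{-1}$ gives $y_k\to gx$, invariance gives $f(y_k)=f(x_k)\to f(x)=f(gx)$, and Step~1 yields $g\p{-T}v_k \in \widehat{\partial}f(y_k)$ with $g\p{-T}v_k \to g\p{-T}v$; hence $g\p{-T}v \in \partial f(gx)$, i.e.\ $\partial f(x)\subseteq g\p T \partial f(gx)$, and the reverse inclusion is symmetric. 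Substituting $x = g\p{-1}\overline{x}$ yields the claim.

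I do not expect any significant obstacle: the only care required is in Step~1 to verify that the change of variables $z=gy$ is compatible with the $o(|y-x|)$ term, which is immediate from the Lipschitz equivalence of $|y-x|$ and $|gy-gx|$ since $g\in \GL(n,\R)$. Lower semicontinuity of $f$ is used only implicitly, to ensure that $\partial f$ is defined and that the outer-limit construction in Step~2 is the standard one.
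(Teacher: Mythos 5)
Your proposal is correct and takes essentially the same approach as the paper: both derive the identity from the chain rule for the limiting subdifferential under the linear diffeomorphism $x\mapsto gx$, i.e.\ by writing $f = f\circ\phi_g$ and differentiating at $g^{-1}\overline{x}$. The only difference is that the paper simply cites \cite[Exercise 10.7]{rockafellar2009variational} for this chain rule, whereas you reprove it from the definitions of $\widehat{\partial}$ and the outer-limit construction of $\partial$; the details you supply are all correct.
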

\begin{proof}
    Applying the chain rule \cite[Exercise 10.7]{rockafellar2009variational} to $f(x) = f(gx)$ at $g^{-1}\overline{x}$ for any $g\in G \subseteq \mathrm{GL}(n,\mathbb{R})$ yields the result.
\end{proof}

\begin{fact}
    \label{fact:chain_orbit_smooth}
    Let $f:\R\p n\to \R$ be $D\p k$ and invariant under the natural action of a Lie subgroup of $\GL(n)$. For all $\overline{x},v_1,\hdots,v_k\in\R\p n$,
    $f^{(k)}(g^{-1}\overline{x})(v_1,\hdots,v_{k}) = f^{(k)}(\overline{x})(gv_1,\hdots,gv_{k})$.
\end{fact}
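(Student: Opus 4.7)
The plan is to reduce the claim to a higher-order chain rule applied to the composition of $f$ with the linear map $L_g:\R^n\to\R^n,\ x\mapsto gx$, and then invoke invariance.

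First, I would fix $g\in G$ and note that invariance gives $f(gx)=f(x)$ for every $x\in\R^n$, i.e.\ $f\circ L_g=f$ as functions. Taking $k$ iterated Fr\'echet derivatives of both sides yields
\[
(f\circ L_g)^{(k)}(x)(v_1,\ldots,v_k)=f^{(k)}(x)(v_1,\ldots,v_k)
\]
for all $x,v_1,\ldots,v_k\in\R^n$.

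Second, I would establish a higher-order chain rule for linear reparametrizations: for any $D^k$ function $\varphi:\R^n\to\R$ and any linear $L:\R^n\to\R^n$,
\[
(\varphi\circ L)^{(k)}(x)(v_1,\ldots,v_k)=\varphi^{(k)}(Lx)(Lv_1,\ldots,Lv_k).
\]
This I would prove by induction on $k$. The base case $k=1$ is the usual chain rule combined with $L'(x)=L$. For the inductive step, assuming the formula for $k-1$, I would differentiate once more in the direction $v_1$ using $\varphi^{(k)}(x)(v_1,\ldots,v_k)=\frac{d}{dt}|_{t=0}\varphi^{(k-1)}(x+tv_1)(v_2,\ldots,v_k)$, noting that $\varphi^{(k-1)}(L(x+tv_1))=\varphi^{(k-1)}(Lx+tLv_1)$ by linearity.

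Combining the two steps with $L=L_g$ and $\varphi=f$ gives $f^{(k)}(x)(v_1,\ldots,v_k)=f^{(k)}(gx)(gv_1,\ldots,gv_k)$ for all $x$; substituting $x=g^{-1}\overline{x}$ yields the claim. There is no real obstacle: the only nontrivial ingredient is the higher-order chain rule for a linear map, and even this is routine since $L'$ is constantly $L$, so no Fa\`a di Bruno--type combinatorics arises.
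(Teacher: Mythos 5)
Your proof is correct and takes essentially the same approach as the paper: an induction on the derivative order that exploits the fact that the derivative of a linear map is constantly that map, so no Fa\`a di Bruno combinatorics arises. You merely factor the inductive step out as a stand-alone chain-rule lemma for linear reparametrizations, while the paper differentiates the invariance identity $f^{(i)}(g^{-1}\overline{x})(v_1,\hdots,v_i)=f^{(i)}(\overline{x})(gv_1,\hdots,gv_i)$ directly in the direction $gv_{i+1}$ at each step.
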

\begin{proof}
    Differentiating $f(x) = f(gx)$ at $g^{-1}\overline{x}$ in the direction $v$ yields $f'(g^{-1}\overline{x})(v) = f'(\overline{x})(gv)$. Assuming that $f^{(i)}(g^{-1}\overline{x})(v_1,\hdots,v_{i}) = f^{(i)}(\overline{x})(gv_1,\hdots,gv_{i})$, deriving with respect to $\overline{x}$ in the direction $gv_{i+1}$ yields $f^{(i+1)}(g^{-1}\overline{x})(v_1,\hdots,v_{i+1}) = f^{(i+1)}(\overline{x})(gv_1,\hdots,gv_{i+1})$.
\end{proof}

\section{Flatness and conservation}
\label{sec:Flatness and conservation}

Having established basic properties of flat minima, we now show how conserved quantities in subgradient dynamics provide a useful tool for analyzing them in more detail.

\subsection{Flattening trajectories}

A curve is a function $x:I\to \R$ where $I$ is an interval of $\R$.

\begin{definition}
    A trajectory of $F:\R\p n \rightrightarrows\R\p n$ is an absolutely continuous curve $x:I\to\R\p n$ where $I$ is an interval of $\R$ such that
$$\forae t\in I, ~~~ x'(t) \in F(x(t)).$$
\end{definition}
If $F$ is singleton-valued and continuous, then its trajectories are $C\p 1$.
We refer to solutions to the differential inclusion $\dot{x}\in F(x)$ as trajectories of $F$ over an interval $I = [0,T)$ for some $T\in(0,\infty]$ or $I = [0,T]$ for some $T\in(0,\infty)$. A solution $x:I\to\R\p n$ is maximal if for any other solution $y:J\to\R\p n$ such that $I\subseteq J$ and $x(t)=y(t)$ for all $t\in I$, we have $I=J$. A solution is globally defined if $I = [0,\infty)$. 

\begin{definition}
    \label{def:flattening}
    Given $f:\R\p n\to \R$, a curve $x:I\to \R\p n$ is flattening, or flattens over time (resp. strictly), if $x(t)\preceq x(s)$ (resp. $x(t)\prec x(s)$) for all $s,t\in I$ such that $s<t$. A curve $x:I\to \R$ sharpens (resp. strictly) over time if $I\ni t\mapsto x(-t)\in\R\p n$ flattens over time (resp. strictly).
\end{definition}
We seek to construct such flattening curves using the following assumption. It is satisfied in \cref{eg:monomial}, \cite[Examples 4.4, 4.5]{josz2025lyapunov}, and \cite[Examples 4.2, 4.3, ..., 4.8]{josz2026implicit}.


\begin{assumption}
    \label{assum:conserved}
    Let $f:\R\p n \to \R$ be locally Lipschitz definable and $c:\R\p n \to \overline{\R}$ be $C\p 4$ on a bounded convex open set $U\subseteq \R\p n$ such that
    $$\exists \omega> 0:~ \forall x\in U,~ \forall v\in \overline{\nabla} f(x), ~~~ \langle \nabla c(x) , v \rangle = 0 ~~\text{and}~~ ~ \langle \nabla\p2 c(x) v , v \rangle \leq - \omega|v|\p 2.$$
\end{assumption}

The first-order condition in \cref{assum:conserved} induces conservation laws.

\begin{proposition}
    \label{prop:conserved_mutual}
    Let $f:\R\p n \to \R$ be locally Lipschitz definable and $c:\R\p n \to \overline{\R}$ be $D\p 1$ on an open set $U\subseteq \R\p n$ such that $\langle \nabla c(x) , v \rangle = 0$ for all
    $x\in U$ and $v\in \overline{\partial} f(x)$.
    \begin{enumerate}[label=\rm{(\roman{*})}]
    \item \label{item:conserved_1} If $x:I\to U$ is a trajectory of $\overline{\partial}f$, then $c$ is conserved. 
    \item \label{item:conserved_2} If $x:I\to U$ is a trajectory of $\nabla c$, then $f$ is conserved.
    \end{enumerate}
\end{proposition}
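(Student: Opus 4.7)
The plan is to apply a chain rule to the function $t \mapsto c(x(t))$ in part (i) and to $t \mapsto f(x(t))$ in part (ii), then read off the conservation from the orthogonality hypothesis.

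For part (i), I first note that any solution $x:I\to U$ of the differential inclusion $\dot x\in-\overline\partial f(x)$ is (locally) absolutely continuous: indeed, $f$ is locally Lipschitz, so $\overline\partial f$ is locally bounded on the bounded set $U$ by \cite[Theorem 9.13]{rockafellar2009variational}, hence $\dot x$ is essentially bounded. Since $c$ is $C^1$ on $U$, the classical chain rule for absolutely continuous curves gives
\begin{equation*}
\frac{d}{dt}c(x(t)) \;=\; \langle \nabla c(x(t)),\dot x(t)\rangle \qquad \text{for a.e. } t\in I.
\end{equation*}
At such a point $t$, $\dot x(t)=-v(t)$ with $v(t)\in\overline\partial f(x(t))$, so the hypothesis $\langle\nabla c(x),v\rangle=0$ yields $(d/dt)c(x(t))=0$ a.e. Since $t\mapsto c(x(t))$ is absolutely continuous, it is constant on $I$.

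For part (ii), the obstacle is that $f$ is only locally Lipschitz, so the classical chain rule is unavailable and one must use a nonsmooth chain rule valid in the definable setting. Specifically, if $x:I\to U$ is $C^1$ with $\dot x=\nabla c(x)$ (so $x$ is in particular Lipschitz, since $c\in C^4$ on the bounded set $U$ makes $\nabla c$ bounded there), then by the chain rule for locally Lipschitz definable functions along Lipschitz arcs (see, e.g., \cite{davis2020stochastic} or \cite[Lemma 2]{bolte2007clarke}), the composition $t\mapsto f(x(t))$ is absolutely continuous and
\begin{equation*}
\frac{d}{dt}f(x(t)) \;=\; \langle v,\dot x(t)\rangle \qquad \text{for a.e. } t\in I, \text{ for every } v\in \overline\partial f(x(t)).
\end{equation*}
Picking any $v\in\overline\partial f(x(t))$ (which is nonempty by \cite[Theorem 9.13]{rockafellar2009variational}) and invoking the hypothesis $\langle\nabla c(x(t)),v\rangle=0$, we get $(d/dt)f(x(t))=0$ almost everywhere, so $f\circ x$ is constant.

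The only nontrivial step is the appeal to the definable chain rule in (ii); everything else is bookkeeping. I would cite the relevant result explicitly and keep the proof to a few lines.
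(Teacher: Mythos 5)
Your proof is correct and follows essentially the same route as the paper: part (i) is the classical chain rule for a $D^1$ function along an absolutely continuous curve, and part (ii) invokes path-differentiability of locally Lipschitz definable functions along absolutely continuous arcs (the paper cites Bolte--Pauwels on conservative fields; your alternative citations supply the same tool). One small remark: your argument that $\dot x$ essentially bounded implies $x$ absolutely continuous in (i) is circular --- absolute continuity is already built into the notion of a solution to a differential inclusion, and the paper simply takes it as given.
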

\begin{proof}
    \ref{item:conserved_1} Since $x(\cdot)$ is absolutely continuous, it is differentiable almost everywhere. Thus, for almost every $t\in I$, $(c\circ x)'(t) = \langle \nabla c(x),x'(t)\rangle = 0$ and $I \ni t\to c(x(t))$ is constant.
    
    \ref{item:conserved_2} Since $f$ is definable and Lipschitz continuous, by the chain rule in \cite[Corollary 5.4]{drusvyatskiy2015curves},
    for almost every $t \in I$, $$\forall v \in \overline{\partial} f(x(t)),~~~ (f\circ x)'(t) = \langle v,x'(t)\rangle = \langle v,\nabla c(x(t))\rangle = 0$$
and $I \ni t\to f(x(t))$ is constant.
\end{proof}

Under \cref{assum:conserved}, gradient trajectories of the conserved quantity $c$ strictly sharpen over time. This is because the level sets are contracting. To see why, we begin with a simple lemma.

\begin{lemma}
\label{lemma:c}
If $c:\R\p n \to \overline{\R}$ is $C\p 4$ on a bounded convex open set $U\subseteq \R\p n$, then there exists $M>0$ such that
$$\forall x,y \in U,~~~ \left|\langle \nabla c(x) - \nabla c(y) , x-y \rangle - \langle \nabla\p2 c(x) (x-y),x-y \rangle\right| \leq M|x-y|\p 3.$$    
\end{lemma}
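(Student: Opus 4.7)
The plan is to Taylor-expand $\nabla c$ to second order along the segment $[x,y]$, which lies in $U$ by convexity. Since $c$ is $C^4$ on $U$, the gradient $\nabla c$ is $C^3$ and $\nabla^2 c$ is $C^1$, so the remainder in a second-order expansion of $\nabla c$ will be controlled by $\nabla^3 c$. The quantity inside the absolute value in the statement is precisely what such a remainder estimate produces after pairing with $x-y$.

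Concretely, I would first apply the fundamental theorem of calculus to $t\mapsto \nabla c(x+t(y-x))$ and then subtract $\nabla^2 c(x)(y-x)$ to obtain
$$\nabla c(y)-\nabla c(x)-\nabla^2 c(x)(y-x) \;=\; \int_0^1 \bigl[\nabla^2 c(x+t(y-x))-\nabla^2 c(x)\bigr](y-x)\,dt.$$
Taking the inner product with $y-x$ and using that $\langle A(y-x),y-x\rangle=\langle A(x-y),x-y\rangle$ for any matrix $A$, the left-hand side becomes, up to a sign, exactly $\langle \nabla c(x)-\nabla c(y),x-y\rangle-\langle \nabla^2 c(x)(x-y),x-y\rangle$, which is the quantity we want to bound.

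A second application of the fundamental theorem, this time to $s\mapsto \nabla^2 c(x+st(y-x))$, gives
$$\bigl[\nabla^2 c(x+t(y-x))-\nabla^2 c(x)\bigr](y-x) \;=\; \int_0^1 t\,\nabla^3 c(x+st(y-x))(y-x,y-x)\,ds.$$
Substituting, pairing with $y-x$, and taking absolute values under the double integral leads to a bound of the form $M|x-y|^3$ with $M=\frac{1}{2}\sup_{\xi\in U}\|\nabla^3 c(\xi)\|$, after computing $\int_0^1\int_0^1 t\,ds\,dt = \frac{1}{2}$.

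The main technical point — the only place the hypotheses on $U$ actually bite — is the uniform bound $\sup_U\|\nabla^3 c\|<\infty$. This uses that $\nabla^3 c$ is continuous on $U$ by the $C^4$ hypothesis together with compactness of $\overline U$ (from boundedness of $U$), under the natural reading that $c$ is $C^4$ on an open neighborhood of $\overline U$; alternatively one can shrink $U$ to a relatively compact open subset of the domain of smoothness. Once this finiteness is in hand, the rest is routine Taylor-remainder bookkeeping and presents no further obstacle.
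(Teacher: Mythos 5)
Your proof is correct and reaches the same conclusion, but the execution is genuinely different from the paper's. The paper introduces the scalar function $g(x,y)=\langle\nabla c(x)-\nabla c(y),x-y\rangle$ on $U\times U$, invokes a cited Taylor-remainder estimate (Nesterov's Lemma 1.2.4) for the $C^3$ function $g$ in the direction $(0,v)$, and then computes $\nabla g(x,x)$ and $\nabla^2 g(x,x)$ explicitly via an auxiliary function $h(y)=g(x,y)$ to identify the first- and second-order terms. You instead expand $\nabla c$ directly along the segment by two applications of the fundamental theorem of calculus, which produces the remainder as an explicit double integral of $\nabla^3 c$ and gives the clean constant $M=\tfrac12\sup_U\|\nabla^3 c\|$. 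Your route is more elementary and self-contained — it avoids the external citation and the derivative bookkeeping for $g$ (where, incidentally, the paper's stated $\nabla^2 h(x)=\nabla^2 c(x)$ should read $2\nabla^2 c(x)$, which is what makes the $1/2$ in the cited Taylor bound come out right).

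One thing you do better than the paper is to flag explicitly where the hypotheses actually enter: a $C^4$ function on a bounded open $U$ need not have $\nabla^3 c$ bounded on $U$ (think of a smooth function blowing up at $\partial U$), so the estimate requires either reading the hypothesis as smoothness on a neighborhood of $\overline U$ or shrinking $U$, exactly as you note. The paper's proof quietly relies on the same finiteness — the cited Nesterov-type estimate is really a Lipschitz bound on the top-order derivative — without remarking on it. So your version is not only correct but makes the one nontrivial hypothesis visible.
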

\begin{proof}
    Consider the function $g:\R\p n \times \R\p n\to \R$ defined by $g(x,y) = \langle \nabla c(x) - \nabla c(y) , x-y \rangle$. Since $c$ is $C\p 4$ on $U$, $g$ is $C\p 3$ on $U$. By \cite[Lemma 1.2.4]{nesterov2003introductory}, there exists $M>0$ such that for all $x,y \in U$, $u \in U +\{-x\}$, and $v \in U +\{-y\}$ we have
$$\left|g(x+u,y+v) - g(x,y) - \langle \nabla g(x,y), (u,v) \rangle - \langle \nabla\p2 g(x,y) (u,v),(u,v) \rangle/2\right| \leq M(|u|\p 3+|v|\p 3).$$
In particular,
$$\left|g(x,x+v) - g(x,x) - \langle \nabla g(x,x), (0,v) \rangle - \langle \nabla\p2 g(x,x) (0,v),(0,v) \rangle/2\right| \leq M|v|\p 3.$$
Fix $x \in U$ and consider the function $h: \R\p n\to \R$ be defined by $h(y) = g(x,y) = c'(x)(x-y)-c'(y)(x-y)$. We have
$$\forall u \in \R\p n,~~~  h'(y)u = -c'(x)u-c''(y)(x-y,u)+c'(y)u,$$
$$\forall u,v \in \R\p n,~~~ h''(y)(u,v) = -c'''(y)(x-y,u,v)+c''(y)(v,u)+c''(y)(u,v).$$
In particular, $\nabla h(x) = 0$ and $\nabla \p 2 h(x) = \nabla\p 2 c(x)$. For all $x\in U$ and $v \in U +\{-x\}$, it follows that
$|\langle \nabla c(x+v) - \nabla c(x) , v \rangle - \langle \nabla\p2 c(x) v,v \rangle| \leq M|v|\p 3$, yielding the desired inequality.
\end{proof}

The following result is one of the main findings of this paper. 
\begin{theorem}
    \label{thm:contracting}
    Under \cref{assum:conserved}, for all $x_0 \in U$ near which $f$ is nonconstant, there exists a solution $x:[0,\overline{t})\to \R\p n$ to
    $$\left\{ \begin{array}{ccc}
        \dot{x} & = & \nabla c(x)  \\
        x(0) & = & x_0 
    \end{array} \right.$$
    such that the two equivalent conditions hold: 
    \begin{enumerate}[label=\rm{(\roman{*})}]
    \item $\exists \overline{r}>0, ~ \forall r \in [0,\overline{r}), ~ \forall t \in [0,\overline{t}), ~~~  \cf(x_0,r) \leq \cf(x(t),e\p {-\omega t/2} r)$,
    \item $\exists \overline{\ell}>0, ~ \forall \ell \in [0,\overline{\ell}), ~ \forall t \in [0,\overline{t}), ~~~  \of(x(t),\ell) \leq e\p {-\omega t/2} \of(x_0,\ell)$.
    \end{enumerate}
    \end{theorem}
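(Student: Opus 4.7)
The plan is to prove statement (ii) directly via a differential-inequality argument on $\rho(t):=\of(x(t),\ell)$, and then to derive (i) from (ii) through the duality between $\cf$ and $\of$ recorded in \cref{fact:primal,fact:dual}.

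\emph{Setup.} Since $c$ is $C^4$ on the bounded open set $U$, Cauchy--Lipschitz gives a unique local solution $x:[0,\bar t)\to U$ to $\dot x=\nabla c(x)$ with $x(0)=x_0$. By \cref{prop:conserved_mutual}, $f$ is conserved along every trajectory of $\nabla c$ remaining in $U$, so the flow $\theta_t$ is a $C^3$ diffeomorphism preserving every level set of $f$ together with its interior. Since $x_0\notin\inte[f=f(x_0)]$ by hypothesis, $f$ is nonconstant near every $x(t)$. Combining this with the definable Morse--Sard theorem and compactness of $x([0,\bar t])$ (taking $\bar t$ smaller if needed), I would extract uniform radii $\bar r,\bar\ell>0$ so that \cref{assum:isolated} holds at each $x(t_0)$, $t_0\in[0,\bar t)$, with common thresholds.

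\emph{Upper bound on $D^{+}[\rho^2]$.} Fix $\ell\in(0,\bar\ell)$ and $t_0\in[0,\bar t)$, and pick $\tilde y\in P_{[|f-f(x(t_0))|\ge\ell]}(x(t_0))$; \cref{fact:sublevel} yields $|x(t_0)-\tilde y|=\rho(t_0)$ and $|f(\tilde y)-f(x(t_0))|=\ell$. Define $y(t):=\theta_{t-t_0}(\tilde y)$. By conservation, $|f(y(t))-f(x(t))|=\ell$, so $y(t)\in[|f-f(x(t))|\ge\ell]$, giving $\rho(t)\le|x(t)-y(t)|$ with equality at $t_0$. Thus
\begin{equation*}
D^{+}[\rho^2](t_0)\;\le\;\tfrac{d}{dt}|x(t)-y(t)|^2\Big|_{t=t_0}=2\langle x(t_0)-\tilde y,\nabla c(x(t_0))-\nabla c(\tilde y)\rangle.
\end{equation*}
\cref{lemma:c} bounds this by $2\langle\nabla^2 c(x(t_0))(\tilde y-x(t_0)),\tilde y-x(t_0)\rangle+2M\rho(t_0)^3$. \cref{lemma:curve} at $x(t_0)$ gives $\tilde y-x(t_0)=\lambda v$ with $\lambda>0$ and $v\in\overline{\partial} f(\tilde y)$; combining \cref{assum:conserved} at $\tilde y$ with the Lipschitz continuity of $\nabla^2 c$ on a closed tube containing the trajectory yields $\langle\nabla^2 c(x(t_0))(\tilde y-x(t_0)),\tilde y-x(t_0)\rangle\le-\omega\rho(t_0)^2+L\rho(t_0)^3$ for some $L>0$, so
\begin{equation*}
D^{+}[\rho^2](t_0)\;\le\;-2\omega\rho(t_0)^2+2(L+M)\rho(t_0)^3.
\end{equation*}

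\emph{Gr\"onwall and conclusion.} The crucial point is uniform smallness: for $\ell$ small enough, $\rho(t)\le\omega/(2(L+M))$ for all $t\in[0,\bar t)$. Otherwise sequences $\ell_k\searrow 0$, $t_k\to t_*$ and $\epsilon>0$ with $\rho(t_k)\ge\epsilon$ would produce in the limit a ball $\bar B_\epsilon(x(t_*))$ on which $f$ is constant, contradicting the nonconstancy near $x(t_*)$ established in the setup. With this bound, $D^{+}[\rho^2]\le-\omega\rho^2$, so monotonicity of $t\mapsto\rho(t)^2 e^{\omega t}$ yields $\rho(t)^2\le e^{-\omega t}\rho(0)^2$, i.e., (ii). Finally, (i) follows by choosing $\bar r$ with $\cf(x_0,r)<\bar\ell$ for $r\in[0,\bar r)$: any $\ell\le\cf(x_0,r)$ has $\of(x_0,\ell)\le r$ by \cref{fact:primal}, so (ii) gives $\of(x(t),\ell)\le e^{-\omega t/2}r$, and \cref{fact:dual} then produces $\cf(x(t),e^{-\omega t/2}r)\ge\cf(x_0,r)$. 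The main obstacle, beyond routine book-keeping, is the compactness-based uniformity argument securing both \cref{assum:isolated} along the trajectory and the smallness of $\rho$ needed to absorb the $O(\rho^3)$ remainder.
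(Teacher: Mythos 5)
You take a genuinely different and workable route. You run Gr\"onwall directly on $\rho(t)=\of(x(t),\ell)$: at each $t_0$ you re-select a projection point $\tilde y\in P_{[|f-f(x_0)|\ge\ell]}(x(t_0))$, evolve it by the flow, and apply the curvature hypothesis of \cref{assum:conserved} \emph{at $\tilde y$ itself}, with \cref{lemma:c} plus Lipschitz continuity of $\nabla^2 c$ producing the explicit $O(\rho^3)$ remainder that your uniform-smallness argument absorbs. The paper instead fixes once and for all the maximizing curve $\gamma(r)\in\arg\max_{\overline B_r(x_0)}|f-f(x_0)|$ from \cref{lemma:curve} applied at $x_0$, tracks $y_r(t)=\theta_t(\gamma(r))-\theta_t(x_0)$, and controls $\langle\nabla^2 c(\gamma(r))y_r,y_r\rangle$ through a definable-selection limit argument ($y_r(t)/|y_r(t)|\to u$ as $(t,r)\searrow 0$, with $\langle\nabla^2 c(x_0)u,u\rangle\le-\omega$). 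The trade-off: the paper needs \cref{lemma:curve} only at $x_0$, but must degrade the curvature constant through the limit step; you re-apply the curvature estimate at the exact point where \cref{assum:conserved} holds sharply, but in exchange need \cref{assum:isolated} (hence \cref{lemma:curve}) uniformly along $x([0,\bar t])$, which your Morse--Sard-plus-compactness argument supplies. Two details to fix in a full write-up: (a) the claim $\lambda>0$ from \cref{lemma:curve} is unjustified here — item~(v) gives it only when $x(t_0)$ is a local minimum — but this is immaterial since $\lambda$ enters your estimate only as $\lambda^2$; (b) the Dini-derivative/Gr\"onwall step needs $\rho$ to be at least upper semicontinuous in $t$, and in fact $\rho(t)=d(x(t),[|f-f(x_0)|\ge\ell])$ is Lipschitz because the level set is $t$-independent by conservation — this is what makes the argument legitimate and is worth stating explicitly.
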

\begin{proof}
Since $\nabla c$ is Lipschitz continuous on $U$, by \cite[Theorem D.1]{lee2012smooth}, there exist $\overline{t},\overline{r}>0$ such that the ODE 
$$\left\{ \begin{array}{ccc}
        \dot{z} & = & \nabla c(z)  \\
        z(0) & = & z_0
    \end{array} \right.$$
admits a unique $C\p 1$ solution $z: [0,\overline{t})\to U$ for all $z_0 \in B_{\overline{r}}(x_0)$. Since $\nabla c$ is $C\p 2$, by \cite[Theorem D.1]{lee2012smooth}, the semiflow $\theta:[0,\overline{t})\times B_{\overline{r}}(x_0) \to \R\p n$ defined by $\theta(t,z_0) = z(t)$ is $C\p 2$. By \cite[Lemma 1.2.4]{nesterov2003introductory}, there exists $C>0$ such that for all $t,\widetilde t \in [0,\overline{t})$ and $z_0,\widetilde z_0 \in B_{\overline{r}}(x_0)$, we have  
\begin{equation}
    \label{eq:theta}
    | \theta(t,z_0) - \theta(\widetilde t, \widetilde z_0) - \theta'(\widetilde t,\widetilde z_0)(t - \widetilde t, z_0 - \widetilde z_0)|\leq C\left[(t - \widetilde t)\p 2 + |z_0 - \widetilde z_0|\p 2\right].
\end{equation}

Since $f$ is locally Lipschitz, regular, definable, and nonconstant near $x_0$, by \cref{lemma:curve} there exist definable curves $\gamma:[0,\overline{r}]\to \R\p n$, $\lambda:[0,\overline{r}]\to \R$, and $v:[0,\overline{r}]\to \R\p n$ such that $\gamma$ is $C\p 1$ on $[0,\overline{r}]$ and 
 $$\forall r \in [0,\overline{r}], ~~ \gamma(r)= x_0 + \lambda(r)v(r) \in \arg\max_{\overline{B}_r(x_0)} |f-f(x_0)|,~~ |\gamma(r)-x_0|=r,~~ v(r) \in \overline{\nabla} f(\gamma(r)),$$
after possibly reducing $\overline{r}$.
As $c$ is $C\p 4$ on $U$, there exists $L>0$ such that
$$\forall x,y \in U,~~~|\nabla\p 2 c(x) - \nabla\p 2 c(y) |\leq L|x-y|.$$ Let $M>0$ be given by \cref{lemma:c}.
For all $(t,r) \in [0,\overline{t}) \times [0,\overline{r})$, let $x_r(t) = \theta(t,\gamma(r))$ and $x(t) = x_0(t) = \theta(t,\overline{x})$. As $\gamma$ is continuous, so is the composition $[0,\overline{t}) \times [0,\overline{r}) \ni (t,r)\to x_{r}(t)\in \R\p n$.
In particular, $|x_r(t)-x_0| \leq \omega/(8\max\{L,M\})$ for all $t \in [0,\overline{t})$ and $r \in [0,\overline{r})$
after possibly reducing $\overline{t}$ and $\overline{r}$. It follows that
$|x_r(t)-\gamma(r)| \leq |x_r(t)-x_0| + |x_0-\gamma(r)| \leq \omega/(8L)+\omega/(8L) = \omega/(4L)$
and
$|x_r(t)-x(t)| \leq |x_r(t)-x_0| + |x_0-x_0(t)| \leq \omega/(4M)$.

The main idea of the proof is now as follows. Using \cref{prop:conserved_mutual}, we have
$$ \cf(x_0,r) = \max_{\overline{B}_r(x_0)} |f-f(x_0)|
    = |f(x_r(0))-f(x(0))| 
     = |f(x_r(t))-f(x(t))|
    \leq \cf(x(t),e\p {-\omega t/2} r)$$
where the last inequality is due to $|x_r(t)-x(t)| \leq e\p {-\omega t/2}|x_r(0)-x(0)| = e\p {-\omega t/2} r$.
The step we need to justify is the contraction. To that avail, let $y_r:[0,\overline{t})\to \R\p n$ be defined by $y_r(t) = x_r(t) - x(t)$. Initially $y_r(0)= x_r(0)-x(0) = \gamma(r) - x_0 = \lambda(r)v(r) \neq 0$ for all $r \in (0,\overline{r})$. Since $v(r) \in \overline{\nabla} f(\gamma(r))$, by \cref{assum:conserved} we have $\langle \nabla\p 2 c(\gamma(r))v(r),v(r) \rangle \leq -\omega |v(r)|\p 2$. Together with $\lambda(r)\neq 0$, we find
$\langle \nabla\p 2 c(\gamma(r))\lambda(r)v(r),\lambda(r)v(r) \rangle \leq -\omega |\lambda(r)v(r)|\p 2$.
In other words,
\begin{equation}
\label{eq:initial}
\forall r \in (0,\overline{r}), ~~~ \left\langle \nabla\p 2 c(\gamma(r))\frac{y_r(0)}{|y_r(0)|},\frac{y_r(0)}{|y_r(0)|} \right\rangle \leq -\omega.
\end{equation}
By \eqref{eq:theta}, for all $(t,r) \in [0,\overline{t}) \times (0,\overline{r})$ we have
$\left| \theta(t,\gamma(r)) - \theta(t,x_0) - \theta'(t,x_0)(0, \gamma(r)-x_0)\right|\leq C|\gamma(r)-x_0|\p 2$
and thus
\begin{equation}
    \label{eq:gamma}
    \left| \frac{\theta(t,\gamma(r)) - \theta(t,x_0)}{|\gamma(r)-x_0|} - \frac{\partial \theta}{\partial z_0}(t,x_0)\frac{\gamma(r)-x_0}{|\gamma(r)-x_0|}\right|\leq C|\gamma(r)-x_0|.
\end{equation}
Each entry of $(\gamma-x_0)/|\gamma-x_0|$ is definable and bounded. By the monotonicity theorem \cite[4.1]{van1998tame}, it is monotone near $0$ and thus convergent. Thus there exists $u \in S\p {n-1}$ such that 
$(\gamma(r)-x_0)/|\gamma(r)-x_0| \to u$ as $r \searrow 0$.
Since $\theta$ is $C\p 1$, it follows that
$$\frac{\partial \theta}{\partial z_0}(t,x_0)\frac{\gamma(r)-x_0}{|\gamma(r)-x_0|} \xrightarrow[(t,r)\searrow (0,0)]{} \frac{\partial \theta}{\partial z_0}(0,x_0) u = u,$$
where the equality holds because $\theta(0,z_0) = z_0$ for all $z_0 \in B_{\overline{r}}(x_0)$. From \eqref{eq:gamma}, we deduce
$$\frac{y_r(t)}{|\gamma(r)-x_0|} = \frac{x_r(t)-x(t)}{|\gamma(r)-x_0|} = \frac{\theta(t,\gamma(r)) - \theta(t,x_0)}{|\gamma(r)-x_0|} \xrightarrow[(t,r)\searrow (0,0)]{} u$$
and $$\frac{y_r(t)}{|y_r(t)|} = \frac{y_r(t)/|\gamma(r)-x_0|}{|y_r(t)|/|\gamma(r)-x_0|} \xrightarrow[(t,r)\searrow (0,0)]{} u.$$
Since $\gamma$ and $\nabla c$ are continuous, passing to the limit in \eqref{eq:initial} yields $\langle \nabla\p 2 c(x_0) u, u \rangle \leq -\omega$. As a result,
$$\forall t \in [0,\overline{t}),~ \forall r \in (0,\overline{r}), ~~~ \left\langle \nabla\p 2 c(\gamma(r))\frac{y_r(t)}{|y_r(t)|},\frac{y_r(t)}{|y_r(t)|} \right\rangle \leq -\frac{\omega}{2}$$
after possibly reducing $\overline{t}$ and $\overline{r}$. Now
\begin{align*}
    \frac{d|y_r|\p 2}{dt} & = 2\langle \dot{y}_r , y_r \rangle \\
    & = 2\langle \dot{x}_r - \dot{x} , y_r \rangle \\
    & = 2\langle \nabla c(x_r) - \nabla c(x) , y_r \rangle \\
    & \leq 2\langle \nabla\p 2 c(x_r)y_r , y_r \rangle + 2M|y_r|\p 3 \\
    & = 2\langle \nabla\p 2 c(\gamma(r))y_r , y_r \rangle + 2\langle [\nabla\p 2 c(x_r)-\nabla\p 2 c(\gamma(r))]y_r , y_r \rangle + 2M|y_r|\p 3 \\
    & \leq 2\langle \nabla\p 2 c(\gamma(r))y_r , y_r \rangle + 2|\nabla\p 2 c(x_r)-\nabla\p 2 c(\gamma(r))||y_r|\p 2 + 2M|y_r|\p 3 \\
    & \leq 2\langle \nabla\p 2 c(\gamma(r))y_r , y_r \rangle + 2L |x_r-\gamma(r)||y_r|\p 2 + 2M|y_r|\p 3 \\
    & \leq -\omega |y_r|\p 2 + 2L |x_r-\gamma(r)||y_r|\p 2 + 2M|y_r|\p 3 \\
    & = -(2\omega - 2L |x_r-\gamma(r)|-2M|y_r|)|y_r|\p 2 \\
    & \leq -(2\omega - \omega/2- \omega/2)|y_r|\p 2 \\
    & \leq -\omega |y_r|\p 2.
\end{align*}
The ODE comparison theorem \cite[Theorem D.2]{lee2012smooth} implies that $|y_r(t)|\p 2 \leq e\p {-\omega t}|y_r(0)|\p 2$, namely,
$|x_r(t)-x(t)| \leq e\p {-\omega t/2}|x_r(0)-x(0)|$ for all $t \in [0,\overline{t})$ and $r \in [0,\overline{r})$. 
As for the equivalence, it is obtained via \cref{lemma:curve} and the change of variables $\ell = \cf(x_0,r)$: $\cf(x_0,r) \leq \cf(x(t),e\p{-\omega t/2}r)$ iff $\of(x(t),\cf(x_0,r)) \leq \of(x(t),\cf(x(t),e\p{-\omega t/2}r))$ iff $\of(x(t),\ell) \leq e\p{-\omega t/2}\of(x_0,\ell)$.  
\end{proof}

Due to the local nature of \cref{thm:contracting}, it does not guarantee that reversing time yields strictly flattening trajectories from a given initial point. In order to do so, we will add some assumptions and rely on the following fact.


\begin{fact}
    \label{fact:exp}
    Let $g:(a,b)\to\R$ be continuous such that for all $s\in (a,b) \subseteq \R$, there exists $s'\in (s,b)$ such that $g(t) \geq e\p {t-s} g(s)$ for all $t \in (s,s')$. Then $g(t) \geq e\p {t-s} g(s)$ for all $s,t\in (a,b)$.
\end{fact}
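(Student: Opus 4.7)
The plan is to reduce the inequality to monotonicity of an auxiliary function and then run a standard connectedness/supremum argument. Define
$$h:(a,b)\to\R, \qquad h(u)=e^{-u}g(u).$$
Since $g$ is continuous, so is $h$. The hypothesis rewrites as: for every $s\in(a,b)$ there exists $s'\in(s,b)$ such that $h(t)\geq h(s)$ for all $t\in(s,s')$. The desired conclusion $g(t)\geq e^{t-s}g(s)$ (for $s\leq t$, which is the meaningful range) rewrites as the statement that $h$ is nondecreasing on $(a,b)$.

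Fix $s\in(a,b)$ and consider
$$T\;=\;\sup\bigl\{u\in[s,b)\,:\,h(r)\geq h(s)\ \text{for all}\ r\in[s,u]\bigr\}.$$
First I would observe that the defining set is nonempty and that $T>s$: the local hypothesis applied at $s$ produces $s'>s$ such that $h(r)\geq h(s)$ on $[s,s')$, so any $u\in(s,s')$ lies in the set. The main step is to rule out $T<b$. Assuming this for contradiction, continuity of $h$ gives $h(T)=\lim_{u\nearrow T}h(u)\geq h(s)$. Applying the local hypothesis at the point $T$ yields some $T'>T$ with $h(r)\geq h(T)$ for all $r\in(T,T')$. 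Chaining the two inequalities, $h(r)\geq h(s)$ on $[s,T')$, which contradicts the definition of $T$ as a supremum. Hence $T=b$, and $h(t)\geq h(s)$ for every $t\in[s,b)$. Since $s$ was arbitrary, $h$ is nondecreasing, and rewriting $h(t)\geq h(s)$ as $g(t)\geq e^{t-s}g(s)$ concludes the proof.

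The only real point of care is the passage to the limit at $T$: the definition of $T$ guarantees $h(r)\geq h(s)$ for all $r\in[s,T)$ (since any $u<T$ lies in the set), so continuity gives $h(T)\geq h(s)$ without needing $T$ itself to lie in the defining set. After that, the extension via the local hypothesis at $T$ is immediate. I do not anticipate any other subtlety: the Fact is a purely topological statement of the form ``locally nondecreasing to the right plus continuous implies nondecreasing,'' and the supremum argument above is the standard tool.
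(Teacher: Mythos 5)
Your proof is correct, and it is essentially the same supremum/infimum connectedness argument the paper uses: the paper takes the infimum $t_0$ of the ``bad'' set $\{t\in[\overline{s},\overline{t}] : g(t)<e^{t-\overline{s}}g(\overline{s})\}$ and derives a contradiction, while you take the supremum $T$ of the complementary ``good'' set and do the same; both then invoke continuity to get the inequality at the boundary point and the local hypothesis to push past it. Your preliminary substitution $h(u)=e^{-u}g(u)$ is a small stylistic improvement that turns the chaining $e^{t-t_0}e^{t_0-\overline{s}}=e^{t-\overline{s}}$ the paper carries out explicitly into the trivial transitivity $h(r)\geq h(T)\geq h(s)$, but it does not change the underlying argument. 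You also correctly observe that the conclusion only makes sense for $s\leq t$ (the paper's proof likewise only treats $\overline{s}<\overline{t}$, and this is all that is used in \cref{cor:flattening}); that reading is the right one.
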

\begin{proof}
    Let $a<\overline{s}<\overline{t}<b$ and suppose $t_0 = \inf \{ t \in [\overline{s},\overline{t}] : g(t) < e\p {t-\overline{s}} g(\overline{s}) \}< \infty$. By assumption $t_0 > \overline{s}$. By definition of $t_0$, we have $g(t) \geq e\p {t-\overline{s}} g(\overline{s})$ for all $t\in [\overline{s},t_0)$. Since $g$ is continuous, $g(t_0) \geq e\p {t_0-\overline{s}} g(\overline{s})$. By assumption, there exists $t_1\in (t_0,b)$ such that $g(t) \geq e\p {t-t_0}g(t_0)$ for all $t \in (t_0,t_1)$, and so $g(t) \geq e\p {t-t_0} e\p {t_0-\overline{s}} g(\overline{s}) = e\p {t-\overline{s}} g(\overline{s})$. Hence $g(t) \geq e\p {t-t_0}g(t_0)$ for all $t \in [\overline{s},t_1]$, contradicting the optimality of $t_0$.
\end{proof}

\begin{corollary}
\label{cor:flattening}
Under \cref{assum:conserved}, let $x:I\to U$ be such that $\dot{x} = -\nabla c(x)$ on a compact interval $I$ of $\R$ containing $t_0$. Let $s,t\in I$ be such that $s<t$.
    \begin{enumerate}[label=\rm{(\roman{*})}]
        \item If $f$ is locally Lipschitz regular, then $\lip f(x(t)) \leq e\p{-\omega(t-s)/2} \lip f(x(s))$. \label{item:exp_reg}
        \item If $c$ is $C\p {k+1}$, $f$ is $D\p k$, and $f\p{(i)}(x(t_0))=0$ for all $i\in \llbracket 1,k-1\rrbracket$, then $\|f\p {(k)}(x(t))\| \leq e\p{-k\omega(t-s)/2} \|f\p {(k)}(x(s))\|$. \label{item:exp_smooth}
    \end{enumerate}
    If $\lip f(x(t_0))>0$ in \ref{item:exp_reg} or $\|f\p {(k)}(x(t_0))\| >0$ in \ref{item:exp_smooth}, then $x(t) \prec x(s)$.
\end{corollary}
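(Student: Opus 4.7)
My approach is to transport the contraction estimate of \cref{thm:contracting} along the time-reversed trajectory, globalize it via continuity of $\cf$, and convert the resulting $\cf$-inequality into the stated decay of $\lip f$ or $\|f^{(k)}\|$ via \cref{prop:regular} and \cref{prop:smooth}.

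For each $u \in I$, the reparametrization $y(\tau) := x(u - \tau)$ solves $\dot y = \nabla c(y)$ with $y(0) = x(u)$, so \cref{thm:contracting} supplies $\bar r_u, \bar \tau_u > 0$ with
$$\cf(x(u), r) \le \cf(x(u - \tau), e^{-\omega \tau / 2} r)$$
for $r \in [0, \bar r_u)$ and $\tau \in [0, \bar \tau_u)$ with $u - \tau \in I$. The constants in the proof of \cref{thm:contracting} depend continuously on the local data (the Lipschitz moduli of $\nabla c$ and $\nabla^2 c$, and the smoothness of the semiflow $\theta$), so compactness of $x(I)$ yields uniform bounds $\bar r_u \ge \bar r > 0$ and $\bar \tau_u \ge \bar \tau > 0$. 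For any $s < t$ in $I$, I partition $[s,t]$ into finitely many subintervals of length less than $\bar \tau$ and chain the local inequalities; the radii only shrink, all remaining in $[0, \bar r)$, so I obtain
$$\cf(x(t), r) \le \cf(x(s), e^{-\omega(t-s)/2} r), \qquad r \in [0, \bar r).$$

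For part \ref{item:exp_reg}, \cref{prop:regular} gives $\cf(y, r) = \lip f(y)\,r + o(r)$; dividing the global inequality by $r$ and letting $r \searrow 0$ yields $\lip f(x(t)) \le e^{-\omega(t-s)/2}\lip f(x(s))$. For part \ref{item:exp_smooth}, \cref{prop:conserved_mutual} adapts to $\dot x = -\nabla c(x)$ (the conservation laws are time-reversal invariant), so $f$ is constant along the trajectory. Differentiating the identity $\langle \nabla c, \nabla f\rangle \equiv 0$ shows that $-\nabla c$ is tangent to the zero set of $\nabla f$, and iterating for higher derivatives propagates $f^{(i)}(x(t_0)) = 0$ (for $i < k$) to every $t \in I$. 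Hence \cref{prop:smooth} applies at each $x(t)$, and dividing the $\cf$-inequality by $r^k$ as $r \searrow 0$ delivers $\|f^{(k)}(x(t))\| \le e^{-k\omega(t-s)/2}\|f^{(k)}(x(s))\|$. For the strict-flattening consequence, the nondegeneracy at $t_0$ together with the exponential inequality propagates positivity of $\lip f \circ x$ (resp.\ $\|f^{(k)} \circ x\|$), so $e^{-\omega(t-s)/2} < 1$ turns the inequality strict, and \cref{prop:regular}/\cref{prop:smooth} then give $\cf(x(t), r) < \cf(x(s), r)$ for all sufficiently small $r > 0$, i.e., $x(t) \prec x(s)$.

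The main obstacle is the globalization step: uniform control of the constants $\bar r_u, \bar \tau_u$ along the compact trajectory. An alternative that bypasses chaining observes that $\Phi(u) := \cf(x(u), C e^{\omega u/2})$ is continuous in $u$ by \cref{fact:continuous} and locally nonincreasing by the local estimate, then invokes a sup argument in the spirit of \cref{fact:exp} to conclude $\Phi$ is globally nonincreasing, which is equivalent to the desired global $\cf$-inequality.
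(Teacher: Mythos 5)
The starting point, local use of \cref{thm:contracting} along the time-reversed trajectory, is the same as the paper's. The divergence, and where your argument has a gap, is the globalization.

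Your main route requires a uniform lower bound $\overline r_u \geq \overline r > 0$ and $\overline\tau_u \geq \overline\tau > 0$ along the compact arc $x(I)$. You justify this by saying the constants in the proof of \cref{thm:contracting} ``depend continuously on the local data.'' But they do not, at least not obviously: the radius $\overline r$ in that proof is cut down several times using the definable curve selection in \cref{lemma:curve} and the monotonicity theorem (to make $\gamma,\lambda,v$ smooth, to make $(\gamma(r)-x_0)/|\gamma(r)-x_0|$ converge, and to pass from the limit inequality $\langle \nabla^2 c(x_0)u,u\rangle\leq -\omega$ to the uniform $-\omega/2$ bound). None of these reductions comes with a modulus that varies continuously in the base point, so compactness alone does not give you the uniform $\overline r,\overline\tau$ you need. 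Your alternative via $\Phi(u)=\cf(x(u),Ce^{\omega u/2})$ has the same problem: the local monotonicity of $\Phi$ only holds at $u$ when $Ce^{\omega u/2}<\overline r_u$, so one fixed $C>0$ works along all of $I$ only if $\inf_u \overline r_u>0$ — the very thing you could not establish.

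The paper avoids this entirely by reversing the order of limits. It never globalizes the $\cf$ inequality at a fixed radius. Instead, for $s$ close to $t$ it divides by $r$ (resp.\ $r^k$) and sends $r\searrow 0$, invoking \cref{prop:regular} (resp.\ \cref{prop:smooth}) to get the \emph{local} scalar inequality $\lip f(x(t))\leq e^{-\omega(t-s)/2}\lip f(x(s))$ (resp.\ the analogous $\|f^{(k)}\|$ inequality). This is now a statement about a single continuous function of time, with no radius in sight, and \cref{fact:exp} upgrades the local exponential bound to the global one by the standard sup argument. That is the fix your proof is missing: take $r\searrow 0$ first, globalize second.

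Two smaller remarks. For \ref{item:exp_smooth}, the paper propagates $f^{(i)}(x(t))=0$ for $i<k$ by noting that conservation gives $f=f\circ\theta_t$ and that $\theta_t$ is a $C^k$ diffeomorphism, so the chain rule (with the lower derivatives already zero) transports the vanishing. Your route — differentiating $\langle\nabla c,\nabla f\rangle\equiv 0$ and iterating — does work (at each order one gets a linear ODE, e.g.\ $\dot g = \nabla^2 c(x)\,g$ for $g=\nabla f\circ x$, then a Sylvester-type equation for $\nabla^2 f\circ x$, and so on), but you should spell out that the iteration produces a linear ODE with zero initial data at every order up to $k-1$; as written it is too vague to check. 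Finally, note that $\lip f$ is only upper semicontinuous in general, so if you invoke a continuity-based sup argument on $\lip f\circ x$ you should at least flag that this requires the (implicitly used) continuity along the trajectory.
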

\begin{proof}
\ref{item:exp_reg} Let $t\in I$. By \cref{thm:contracting} and \cref{prop:regular},
$$\lip f(x(t)) + o(1)=\cf(x(t),r)/r \leq \cf(x(s),e\p {-\omega (t-s)/2} r)/r = e\p {-\omega (t-s)/2}\lip f(x(s)) + o(1)$$ for all $s \in I \cap (-\infty,t]$ sufficiently close to $t$. \cref{fact:exp} enables one to extend it to any $s\in I \cap (-\infty,t]$.

\ref{item:exp_smooth} Assuming the existence of a solution $x:I\to U$ to the ODE passing through $x_0 = x(t_0)$ at time $t_0$ implies the existence of solutions on an open interval $J\supseteq I$ for initial points in a neighborhood $U_0$ of $x_0$. This holds because $\nabla c$ is Lipschitz continuous on $U$, which also implies uniqueness of solutions (using the ODE comparison theorem \cite[Theorem D.2]{lee2012smooth}). Thus the flow $\theta:J \times U_0\to U$ is well defined and $C\p k$. For any $t \in J$, $\theta_t = \theta(t,\cdot)$ defines a diffeomorphism from $U_0$ to $\theta_t(U_0)$ \cite[p. 209]{lee2012smooth}, which is a neighborhood of $x(t)$. Conservation implies that $f = f \circ \theta_t$. By the chain rule, $(f\circ \theta_t)'(x) = f'(\theta_t(x))\circ \theta_t'(x)$. In particular, 
$$\forall v\in R\p n, ~~~ 0 = f'(x_0)(v) = (f\circ \theta_t)'(x_0)(v) = f'(\theta_t(x_0))(\theta_t'(x_0)v),$$
so that $f'(\theta_t(x_0)) = 0$. By induction, $(f\circ \theta_t)\p {(i)}(x_0)v\p i = f\p{(i)}(\theta_t(x_0))(\theta_t'(x_0)v)\p i$ for all $i\in\llbracket 1,k\rrbracket$ and thus $f\p{(i)}(x(t)) = 0$ for all $i\in\llbracket 1,k-1\rrbracket$. It follows that
\begin{align*}
\|f\p {(k)}(x(t))\|/k! + o(1) & =\cf(x(t),r)/r\p k \leq \cf(x(s),e\p {-\omega (t-s)/2} r)/r\p k \\
& = e\p {-k\omega (t-s)/2}\|f\p {(k)}(x(s))\|/k! + o(1)
\end{align*}
for all $s\in I \cap (-\infty,t]$ sufficiently close to $t$ by \cref{thm:contracting} and \cref{prop:smooth}. One again concludes by \cref{fact:exp}.
\end{proof}

\begin{remark}
    \label{rem:ineq}
    One can actually weaken the equality $\langle \nabla c(x) , v \rangle = 0$ for all $v\in \overline{\nabla} f(x)$ in \cref{assum:conserved} to an inequality $\langle \nabla c(x) , v \rangle \geq 0$. In exchange, one must assume that $x(\cdot)$ is defined on $[0,\overline{t}]$ (at no cost) and $x(\overline{t})\in\arg\min \{f(x): x \in U\}$ in \cref{thm:contracting}. In the proof, $f$ instead increases along the trajectories of $\nabla c$, but is still constant on $x(\cdot)$, so that $|f(x_r(0))-f(x(0))| \leq |f(x_r(t))-f(x(t))|$. 
    
    Similarly, one must assume that $x(t_0)\in\arg\min \{f(x): x \in U\}$ in \cref{cor:flattening}. In the proof, instead of the chain rule, one uses Taylor expansions: $f(y) = f(x_0) + \langle \nabla f(x_0),y-x_0\rangle + o(|y-x_0|)$ and $f(\theta_t(y)) = f(\theta_t(x_0)) + \langle \nabla f(\theta_t(x_0)),\theta_t(y)-\theta_t(x_0)\rangle + o(|\theta_t(y)-\theta_t(x_0)|)$. Since $f(x_0) = f(\theta_t(x_0))\leq f(\theta_t(y))\leq f(y)$, this yields $\langle \nabla f(\theta_t(x_0)),\theta_t(y)-\theta_t(x_0)\rangle + o(|\theta_t(y)-\theta_t(x_0)|) = o(|y-x_0|)$ and so $\langle \nabla f(\theta_t(x_0)),\theta_t'(x_0)v\rangle = 0$ for all $v\in \R\p n$, namely $\nabla f(x(t))=0$. The rest follows by induction.

    With these new assumptions, it is sufficient to satisfy the second inequality in \cref{assum:conserved}, i.e., $\langle \nabla\p 2 c(x) v , v \rangle \leq - \omega |v|\p 2$ for all $v\in \overline{\nabla}f(x)$ and $x\in U$, merely for all $x \in U \setminus \arg\min \{f(x): x \in U\}$.
\end{remark}

\subsection{Linear symmetries}

Linear symmetries give rise to a conservation law in subgradient dynamics, as shown in \cite{josz2025subdifferentiation}. We next recall it and compute the Hessian of the conserved quantity in some directions, since it appears in \cref{assum:conserved}. Note that a conservation law in gradient dynamics was proposed earlier in \cite[Proposition 5.1]{zhao2023symmetries}

\begin{assumption}
\label{assum:symmetry}
    Let $f:\mathbb{R}^n\rightarrow \mathbb{R}$ be locally Lipschitz and invariant under the natural action of a Lie subgroup $G$ of $\mathrm{GL}(n,\mathbb{R})$, $C(x) = P_{\mathrm{s}(\mathfrak{g})}(xx^T)$, and $\overline{x} \in \R\p n$.
\end{assumption}

\begin{proposition}
\label{prop:conserved}
Let \cref{assum:symmetry} hold and $c(x) = \|C(x)-C(\overline{x})\|_F\p 2/4$ for some $\overline{x}\in\R\p n$. Then 
    $$\forall x \in \R\p n,~ \forall v \in \overline{\partial} f(x), ~~~  \langle \nabla c(x), v \rangle = 0 ~~~\text{and}~~~ \langle \nabla\p 2 c(x) v , v \rangle  = \langle C(x) - C(\overline{x}) , C(v) \rangle_F.$$
\end{proposition}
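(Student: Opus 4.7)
The first identity (conservation of $c$ along subgradient trajectories) is what is being ``recalled'' from \cite{josz2025subdifferentiation}, and the second is a new Hessian computation. A single principle underlies both: for every $x \in \R\p n$ and every $v \in \overline{\partial} f(x)$,
$$v^T M x \;=\; 0 \qquad \text{for all } M \in \mathrm{s}(\mathfrak{g}).$$
This is the conservation law of that reference; informally, it comes from plugging $y = e^{tA}x$, $A\in\mathfrak{g}$, into the subgradient inequality, noting that $f(y) = f(x)$ by invariance, and identifying the coefficient of $t$ in the symmetric direction $M = (A+A^T)/2$. Both identities will fall out by invoking this principle after straightforward chain-rule computations.

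First I would differentiate $c$. Set $D(x) := C(x) - C(\overline{x})$, which lies in $\mathrm{s}(\mathfrak{g})$ because $P_{\mathrm{s}(\mathfrak{g})}$ is an orthogonal projection. Since $C(x) = P_{\mathrm{s}(\mathfrak{g})}(xx^T)$ and the derivative of $x \mapsto xx^T$ at $x$ in direction $v$ is $xv^T + vx^T$, the chain rule combined with self-adjointness of $P_{\mathrm{s}(\mathfrak{g})}$ and symmetry of $D(x)$ gives
$$c'(x)(v) \;=\; \tfrac{1}{2}\langle D(x),\, P_{\mathrm{s}(\mathfrak{g})}(xv^T + vx^T)\rangle_F \;=\; \tfrac{1}{2}\langle D(x),\, xv^T + vx^T\rangle_F \;=\; v^T D(x)\, x.$$
Hence $\nabla c(x) = D(x)\,x$. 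Applying the conservation principle with $M = D(x) \in \mathrm{s}(\mathfrak{g})$ yields $\langle \nabla c(x), v\rangle = v^T D(x) x = 0$, which is the first identity.

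For the Hessian, I would differentiate $x \mapsto D(x)x$ once more by the product rule. Since $D'(x)v = P_{\mathrm{s}(\mathfrak{g})}(xv^T+vx^T)$,
$$\nabla\p 2 c(x)\, v \;=\; P_{\mathrm{s}(\mathfrak{g})}(xv^T + vx^T)\, x \;+\; D(x)\, v,$$
and therefore
$$\langle \nabla\p 2 c(x)\,v,\,v\rangle \;=\; v^T P_{\mathrm{s}(\mathfrak{g})}(xv^T + vx^T)\,x \;+\; v^T D(x)\, v.$$
The first summand vanishes by invoking the same conservation principle once more, this time with $M = P_{\mathrm{s}(\mathfrak{g})}(xv^T+vx^T) \in \mathrm{s}(\mathfrak{g})$. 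The second summand rewrites, using $D(x) \in \mathrm{s}(\mathfrak{g})$ and $C(v) = P_{\mathrm{s}(\mathfrak{g})}(vv^T)$, as
$$v^T D(x)\,v \;=\; \langle D(x),\,vv^T\rangle_F \;=\; \langle D(x),\,P_{\mathrm{s}(\mathfrak{g})}(vv^T)\rangle_F \;=\; \langle C(x)-C(\overline{x}),\,C(v)\rangle_F,$$
which is the second identity.

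No step presents a real difficulty; the only point of attention is the recognition that the cross term produced when differentiating $D(x)x$ is again of the form $v^T M x$ with $M \in \mathrm{s}(\mathfrak{g})$, so that the same conservation principle used for the first identity also disposes of it. Every other manipulation is either the chain rule for the composition $P_{\mathrm{s}(\mathfrak{g})} \circ (x\mapsto xx^T)$, or the trivial observation that $P_{\mathrm{s}(\mathfrak{g})} D(x) = D(x)$ lets one insert or drop the projector when pairing with $D(x)$.
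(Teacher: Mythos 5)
Your proof is correct and takes a route that is conceptually the same but computationally different from the paper's. The paper invokes \cite[Corollary 1]{josz2025subdifferentiation} in the form $C(x+\alpha v) = C(x) + \alpha^2 C(v)$ for $v \in \overline{\partial} f(x)$, substitutes into $c(x+\alpha v) = \|C(x+\alpha v)-C(\overline{x})\|_F^2/4$, and simply reads both identities off the resulting polynomial in $\alpha$ by matching Taylor coefficients --- a three-line argument with no explicit gradient or Hessian formula. You instead restate the cited identity in its equivalent first-order form $P_{\mathrm{s}(\fg)}(xv^T+vx^T)=0$ (equivalently $v^T M x = 0$ for all $M\in\mathrm{s}(\fg)$), compute $\nabla c(x) = (C(x)-C(\overline{x}))\,x$ and $\nabla^2 c(x)v = P_{\mathrm{s}(\fg)}(xv^T+vx^T)x + (C(x)-C(\overline{x}))v$ explicitly by the product and chain rules, and then kill the unwanted terms with the vanishing principle. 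The two are genuinely the same input repackaged: your principle is exactly the $\alpha^1$ coefficient of the paper's identity. What your version buys is the closed-form expressions for $\nabla c$ and $\nabla^2 c$, which are reusable facts; what the paper's version buys is brevity, since the Taylor match sidesteps all the projector/self-adjointness bookkeeping you carry along. One small simplification you could make: in the cross term $v^T P_{\mathrm{s}(\fg)}(xv^T+vx^T)x$, the matrix $P_{\mathrm{s}(\fg)}(xv^T+vx^T)$ is already zero by the very principle you state, so there is no need to invoke the principle a second time as though it were a fresh application --- you can just observe that the factor vanishes.
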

\begin{proof}
    By \cite[Corollary 5.1]{josz2025subdifferentiation}, for all $x \in \mathbb{R}^n$, $v\in\overline{\partial} f(x)$, and $\alpha\in \mathbb{R}$, we have
    $C(x + \alpha v) = C(x) + \alpha^2 C(v)$ and thus 
    \begin{align*}
        c(x+\alpha v) & = \|C(x+\alpha v)-C(\overline{x})\|_F\p 2/4
        = \|C(x)+\alpha\p 2 C(v)-C(\overline{x})\|_F\p 2/4 \\
        & = \|C(x)-C(\overline{x})\|_F\p 2/4 + \langle C(x)-C(\overline{x}) , C(v) \rangle_F \alpha\p 2/2 + \alpha\p 4 \|C(v)\|_F\p 2/4 \\
        & = c(x) + \langle \nabla c(x) ,v \rangle \alpha + \langle \nabla\p 2 c(x) v ,v \rangle \alpha\p 2/2 + o(\alpha\p 3). \qedhere
    \end{align*}
\end{proof}

Our focus is now on deriving necessary conditions for flatness using the conserved quantity. The following result can be seen as a warm up. It will be useful later.

\begin{proposition}
\label{prop:min_norm}
Under \cref{assum:symmetry}, $\overline{x} \in \arg\loc\min \{ |x|: x \in G\overline{x}\} \Longrightarrow C(\overline{x}) = 0$.
\end{proposition}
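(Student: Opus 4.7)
My plan is to extract a first-order optimality condition on the orbit $G\overline{x}$ by using one-parameter subgroups of $G$. For any $A \in \fg$, the smooth curve $t \mapsto \exp(tA)\overline{x}$ lies in $G\overline{x}$, passes through $\overline{x}$ at $t=0$, and has velocity $A\overline{x}$ there. Since $\overline{x}$ is a local minimum of $|\cdot|$ on $G\overline{x}$ and this curve is continuous into $\R^n$, the scalar function $t \mapsto |\exp(tA)\overline{x}|^2$ attains a local minimum at $t=0$. Differentiating at $t=0$ yields
\[
0 \;=\; 2\langle A\overline{x},\overline{x}\rangle \;=\; 2\,\mathrm{tr}(A\,\overline{x}\,\overline{x}^T) \;=\; 2\,\langle A,\,\overline{x}\,\overline{x}^T\rangle_F,
\]
where the last equality uses that $\overline{x}\,\overline{x}^T$ is symmetric.

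Because of that symmetry, $\langle A,\overline{x}\,\overline{x}^T\rangle_F = \langle (A+A^T)/2,\,\overline{x}\,\overline{x}^T\rangle_F$, so the vanishing extends from $\fg$ to its symmetric part: $\langle S,\,\overline{x}\,\overline{x}^T\rangle_F = 0$ for every $S \in \mathrm{s}(\fg)$. This says $\overline{x}\,\overline{x}^T$ is orthogonal to $\mathrm{s}(\fg)$ in the Frobenius inner product, so $C(\overline{x}) = P_{\mathrm{s}(\fg)}(\overline{x}\,\overline{x}^T) = 0$.

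I do not foresee any real obstacle. The only mildly delicate point is that $G$ need not be closed in $\mathrm{GL}(n,\R)$, so $G\overline{x}$ is a priori only an immersed submanifold of $\R^n$; but the local minimality in the statement is phrased using neighborhoods in the subspace topology of $G\overline{x}$, and the continuity of $t \mapsto \exp(tA)\overline{x}$ viewed as a map into $\R^n$ is all that is needed to transfer the local-minimum property to $t=0$. Consequently the argument goes through verbatim.
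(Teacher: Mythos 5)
Your proof is correct and takes essentially the same approach as the paper: a first-order optimality condition on the orbit obtained by differentiating along a curve in $G$ through $I_n$, yielding $\langle A, \overline{x}\,\overline{x}^T\rangle_F = 0$ for all $A\in\fg$, hence orthogonality to $\mathrm{s}(\fg)$. The only cosmetic difference is that you use the specific one-parameter subgroups $t\mapsto \exp(tA)$ whereas the paper uses an arbitrary smooth curve $\gamma$ with $\gamma(0)=I_n$; both realize all velocity vectors $\gamma'(0)\in\fg$, so the arguments are interchangeable.
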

\begin{proof}
Let $\gamma:(-\epsilon,\epsilon)\rightarrow G$ be a smooth curve such that $\gamma(0) = I_n$ where $\epsilon>0$. Since $0$ is a local minimum of $(-\epsilon,\epsilon) \ni t \to |\gamma(t)\overline{x}|\p 2$, $\langle \gamma(0)\overline{x},\gamma'(0)\overline{x}\rangle = \langle \overline{x},\gamma'(0)\overline{x}\rangle = \langle \gamma'(0), \overline{x}\hspace{.5mm}\overline{x}\p T\rangle = 0$. Thus $\langle v, \overline{x}\hspace{.5mm}\overline{x}\p T\rangle = 0$ for all $v \in \fg$ and $C(\overline{x}) = P_{\mathrm{s}(\fg)} (\overline{x}\hspace{.5mm}\overline{x}\p T)=0$.    
\end{proof}

We have arrived at our second main result.

\begin{theorem}
\label{thm:flat_0}
Under \cref{assum:symmetry}, $$\overline{x}\in \arg\loc\min \{ \lip f(x) : x \in G\overline{x}\} ~\Longrightarrow ~ \exists \overline{v} \in
 \arg\max \{|v| : v \in \partial f(\overline{x})\}: ~C(\overline{v}) = 0.$$
\end{theorem}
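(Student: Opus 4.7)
The plan is to use the equivariance of the subdifferential under the group action, apply Danskin's theorem to extract a first-order condition, and then select a single $\overline v$ with $C(\overline v)=0$ using the special structure of the problem.

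First, by \cref{fact:chain_orbit_lsc} the equivariance $\partial f(g\overline x)=g^{-T}\partial f(\overline x)$ lets us write
\[
\lip f(g\overline x)^{2}\;=\;\max_{v\in\partial f(\overline x)}\bigl|g^{-T}v\bigr|^{2},
\]
a maximum of smooth functions of $g$ over the compact set $\partial f(\overline x)$. Pulling back through the exponential chart $\fg\ni\xi\mapsto e^{\xi}\in G$, the function $\widetilde\Phi(\xi)=\lip f(e^{\xi}\overline x)^{2}$ is locally Lipschitz on $\fg$ with a local minimum at $\xi=0$. A direct computation gives $\frac{d}{dt}|e^{-t\xi^{T}}v|^{2}|_{t=0}=-2\langle\xi,vv^{T}\rangle_F=-2v^{T}\xi v$, which only sees $\mathrm{s}(\xi)$ since $vv^{T}$ is symmetric.

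Second, Danskin's theorem yields that the one-sided directional derivative of $\widetilde\Phi$ at $0$ in direction $\xi$ equals $\max_{v\in V}(-2\langle\xi,vv^{T}\rangle_F)$, where $V:=\arg\max\{|v|:v\in\partial f(\overline x)\}$. Local minimality, together with the symmetric condition at $-\xi$, gives
\[
\min_{v\in V}\langle\xi,vv^{T}\rangle_F\;\le\;0\;\le\;\max_{v\in V}\langle\xi,vv^{T}\rangle_F\qquad\forall\,\xi\in\fg.
\]
A standard separating-hyperplane argument in the space of symmetric matrices then shows that $\overline{\co}\{vv^{T}:v\in V\}$ intersects $\mathrm{s}(\fg)^{\perp}$, so by Carathéodory there exist $v_{1},\dots,v_{k}\in V$ and weights $\lambda_{i}\ge0$ summing to $1$ with $\sum_{i}\lambda_{i}C(v_{i})=0$.

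The final and most delicate step is to upgrade this convex-combination zero to an individual zero, i.e.\ to exhibit a single $\overline v\in V$ with $C(\overline v)=0$. I would proceed by contradiction: suppose $C(v)\neq0$ for every $v\in V$ and seek a direction $\xi_{*}\in\mathrm{s}(\fg)$ with $\langle\xi_{*},vv^{T}\rangle_F>0$ for \emph{all} $v\in V$, which by the Danskin derivative of step two would force $\widetilde\Phi$ to strictly decrease at $0$ in direction $\xi_{*}$, contradicting the local-min hypothesis. Constructing such a $\xi_{*}$ requires exploiting the extra rigidity of the setup: by \cref{fact:hull}, $V\subseteq\overline{\nabla}f(\overline x)$, so each $v\in V$ is a limit of gradients $\nabla f(x_k)$ at differentiable points and therefore satisfies $v\perp\fg\overline x$ (from differentiating the invariance $f(e^{t\xi}x)=f(x)$). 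Combining this orthogonality with the rank-one positive semidefinite nature of $vv^{T}$ and the extreme-point structure of $V$ as the maximum-norm face of $\overline{\partial}f(\overline x)$ in the spirit of \cref{fact:hull} is what I expect to close the gap. Bridging the passage from $0\in\overline{\co}\,C(V)$ to $0\in C(V)$ is the principal obstacle, since standard Carathéodory/separation techniques alone do not suffice for the quadratic, rather than linear, map $v\mapsto vv^{T}$.
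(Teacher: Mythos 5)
Your reduction via equivariance and first-order stationarity is on the right track, and you have correctly pinpointed the hard step: passing from $0\in\overline{\co}\{C(v):v\in V\}$ to a single $\overline v\in V$ with $C(\overline v)=0$. But the paper never faces this obstacle, because the variational tool it deploys is not Danskin's theorem but Rockafellar--Wets's parametric subdifferentiation theorem \cite[Theorem 10.13]{rockafellar2009variational}, which yields a union rather than a convex hull. For a marginal function $p(u)=\inf_v\tau(v,u)$ (here with $\tau(v,t)=\delta_{\partial f(\overline x)}(v)-|\gamma(t)^Tv|^2+\delta_{[-\epsilon/2,\epsilon/2]}(t)$), that theorem gives
\[
\partial p(\overline u)\;\subseteq\;\bigcup_{\overline v\in\arg\min_v\tau(v,\overline u)}\bigl\{y:(0,y)\in\partial\tau(\overline v,\overline u)\bigr\},
\]
so $0\in\partial p(0)$ (Fermat's rule at the local maximizer $I_n$ of $\varphi$) directly produces a \emph{single} $\overline v\in\arg\max\{|v|:v\in\partial f(\overline x)\}$ with $(0,0)\in\partial\tau(\overline v,0)$; the second component of the computed inclusion is precisely $-2\langle\gamma'(0),\overline v\,\overline v^T\rangle_F=0$, whence $C(\overline v)=0$. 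The limiting subdifferential of a marginal function is disaggregated over minimizers; Danskin's theorem (equivalently, the Clarke subdifferential of a pointwise max) aggregates into a convex hull, which is why you get stuck with a convex combination $\sum_i\lambda_i C(v_i)=0$.

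Your proposed route to close the gap is unlikely to succeed: the contradiction argument would need a $\xi_*\in\mathrm{s}(\fg)$ strictly separating $\{C(v):v\in V\}$ from $0$, but by your own separation step no such $\xi_*$ exists precisely when $0\in\overline{\co}\,C(V)$, which is the hypothesis you are trying to strengthen, not negate. The auxiliary orthogonality $v\perp\fg\overline x$ (from invariance of $f$) constrains $\langle vv^T,\overline x\,\xi^T\rangle$-type pairings, not $P_{\mathrm s(\fg)}(vv^T)$, and the rank-one/extreme-point structure of $V$ does not by itself convert a convex-hull zero into a pointwise zero --- the map $v\mapsto vv^T$ is quadratic, as you note. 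The missing idea is simply to work with the limiting subdifferential of the marginal function via \cite[Theorem 10.13]{rockafellar2009variational}, which avoids convexification altogether.
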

\begin{proof}
By assumption, $I_n$ is a local solution to
$$\inf_{g\in G} ~ \lip f(g^{-1}\overline{x})^2 = \inf_{g\in G} \sup_{v \in \partial f(g^{-1}\overline{x})} |v|^2
    = \inf_{g\in G} \sup_{v \in \partial f(\overline{x})} |g^Tv|^2
     = \inf_{g\in G} - \inf_{v \in \mathbb{R}^n} \phi(v,g)
     = -\sup_{g\in G} \varphi(g)$$
where the second equality holds by \cref{fact:chain_orbit_lsc}. Also, $\phi : \mathbb{R}^n \times G \rightarrow \overline{\mathbb{R}}$ and $\varphi : G \rightarrow \mathbb{R}_-$ are defined by
$$\phi(v,g) = \delta_{\partial f(\overline{x})}(v) - |g^Tv|^2 ~~~\text{and}~~~\varphi(g) = \inf_{v \in \mathbb{R}^n} \phi(v,g).$$
Let $\gamma:(-\epsilon,\epsilon)\rightarrow G$ be a smooth curve such that $\gamma(0) = I_n$ where $\epsilon>0$. Consider the function $\tau:\mathbb{R}^n \times \mathbb{R} \rightarrow \overline{\mathbb{R}}$ defined by $$\tau(v,t)= \phi(v,\gamma(t)) + \delta_{[-\epsilon/2,\epsilon/2]}(t).$$ We have 
$$\forall t\in [-\epsilon/2,\epsilon/2], ~~~ (\varphi \circ \gamma)(t) = \inf_{v \in \mathbb{R}^n} \tau(v,t).$$
The function $\tau$ is lower semicontinuous and continuous on its compact domain $\partial f(\overline{x}) \times [-\epsilon/2,\epsilon/2]$ since $f$ is Lipschitz continuous near $\overline{x}$. Thus $\varphi \circ \gamma$ is real-valued on $[-\epsilon/2,\epsilon/2]$. Since $0$ is a local maximum of $\varphi \circ \gamma$, we have $0 \in \partial(\varphi \circ \gamma)(0)$ by Fermat's rule \cite[Theorem 10.1]{rockafellar2009variational}. As the sum of the indicator of a closed convex set and a smooth function, $\tau$ is regular \cite[Example 7.28]{rockafellar2009variational}. \cite[Corollary 10.11]{rockafellar2009variational} then implies that for all $(v,t) \in \mathrm{dom}\tau$, one has
\begin{equation*}
    \partial \tau (v,t) \subseteq \begin{pmatrix}
        N_{\partial f(\overline{x})}(v) - 2 \gamma(t)\gamma(t)^Tv \\
        N_{[-\epsilon/2,\epsilon/2]}(t) -2\langle \gamma(t)^T v , \gamma'(t)^Tv \rangle
    \end{pmatrix}.
\end{equation*}
Due to its bounded domain, the function $\tau(v,t)$ is level-bounded in $v$ locally uniformly in $t$ \cite[1.16 Definition]{rockafellar2009variational}. We may thus apply \cite[Theorem 10.13]{rockafellar2009variational} on parametric subdifferentiation. For all $\overline{t} \in [-\epsilon/2,\epsilon/2]$, it yields
\begin{equation*}
    \partial (\varphi \circ \gamma)(\overline{t}) \subseteq \bigcup_{\overline{v} \in\arg\min\limits_{v\in \mathbb{R}^n} \tau(v,\overline{t})} M(\overline{v},\overline{t})~~~\text{where}~~~M(\overline{v},\overline{t})= \{ y \in \mathbb{R} : (0,y) \in \partial \tau (\overline{v},\overline{t})\}.
\end{equation*}
In particular, since $0 \in \partial (\varphi \circ \gamma)(0)$, there exists $$\overline{v}\in \arg\min \{ \tau(v,0) : v\in \mathbb{R}^n\} = \arg\max \{ |v|: v\in \partial f(\overline{x})\}$$ such that
\begin{equation*}
    (0,0) \in \partial \tau (\overline{v},0) \subseteq 
    \begin{pmatrix}
        N_{\partial f(\overline{x})}(\overline{v}) - 2\overline{v} \\
        -2\langle \overline{v} , \gamma'(0)\overline{v} \rangle
    \end{pmatrix}.
\end{equation*}
Hence $\langle \overline{v} , u\overline{v} \rangle = \langle \overline{v}\hspace{.3mm}\overline{v}^T , u \rangle = 0$ for all $u \in \mathfrak{g}$, and so $C(\overline{v}) = P_{\mathrm{s}(\mathfrak{g})} (\overline{v}\hspace{.3mm}\overline{v}^T)=0$.
\end{proof}

In \cref{thm:flat_0}, one can also take a maximal Bouligand subdifferential by \cref{fact:hull}. The converse of \cref{thm:flat_0} do not always hold.

\begin{example}
    \label{eg:zero_C}
    The function $f(x) = |2x_1\p 2 - x_2\p 2|$ is invariant under the natural action of $G = \{ A \in \GL(2) : A\p T D A = D\}$ where $D = \diag(2,-1)$. The only flat minimum is $(0,0)$ even though $C(x)=P_{\mathrm{s}(\fg)}(xx\p T)= 0$ for all $x\in \R\p 2$.
\end{example}
\begin{proof}
Observe that $f(x) = \langle x , Dx\rangle$. For all $A \in \GL(2)$, $f(Ax)  = \langle Ax,DAx\rangle = \langle x,A\p TDAx\rangle  = \langle x,Dx\rangle = f(x)$. Thus $f$ is invariant under the natural action of $G$, whose Lie algebra is
    $$\fg=\left\{ 
    B \in \R\p{2\times 2}
    :~B^TD+DB=0\right\}=\left\{\begin{pmatrix}
    0 & 2t \\
    -t & 0
\end{pmatrix}:~t\in \R\right\}.
$$ 
Thus $\mathrm{s}(\fg) = \{0\}$ and $C(x)=P_{\mathrm{s}(\fg)}(xx\p T)= 0$ for all $x\in \R\p 2$. By \cref{fact:chain_reg}, $\lip f(x)=2\sqrt{4x_1\p 2+x_2\p 2}$ and thus $(0,0)$ is the only flat minimum by \cref{cor:regular}.
\end{proof}

The converse of \cref{thm:flat_0} does hold in $\ell_1$-matrix factorization, as we show in \cref{subsec:Matrix factorization}.

\begin{corollary}
\label{cor:flat_0}
Under \cref{assum:symmetry}, if $\overline{x}$ is flat and $f$ is regular near $\overline{x}$, then there exists $\overline{v} \in
 \arg\max \{|v| : v \in \partial f(\overline{x})\}$ such that $C(\overline{v}) = 0$.
\end{corollary}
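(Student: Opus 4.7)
The plan is to reduce this to \cref{thm:flat_0} via \cref{cor:regular}. Under \cref{assum:symmetry}, $f$ is locally Lipschitz, and by hypothesis it is additionally regular near $\overline{x}$. In particular, $\lip f(x) < \infty$ for all $x$ in a neighborhood of $\overline{x}$ in $[f=f(\overline{x})]$, so the hypotheses of \cref{cor:regular} are satisfied. Since $\overline{x}$ is flat, \cref{cor:regular}\ref{item:necessary_flat} yields that $\overline{x}$ is a local minimum of $\lip f + \delta_{[f=f(\overline{x})]}$.

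Next I would use the $G$-invariance of $f$: for every $g \in G$ we have $f(g\overline{x}) = f(\overline{x})$, hence $G\overline{x} \subseteq [f = f(\overline{x})]$. Any neighborhood of $\overline{x}$ in $[f=f(\overline{x})]$ therefore contains a neighborhood of $\overline{x}$ in $G\overline{x}$, so the preceding local minimality passes to the smaller set: $\overline{x} \in \arg\loc\min\{\lip f(x) : x \in G\overline{x}\}$.

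Finally, applying \cref{thm:flat_0} delivers some $\overline{v} \in \arg\max\{|v| : v \in \partial f(\overline{x})\}$ with $C(\overline{v}) = 0$, which is the desired conclusion. The argument is entirely a composition of results already in hand; no real obstacle arises, the only subtlety being the step $G\overline{x} \subseteq [f=f(\overline{x})]$ that lets one weaken the constraint from the full level set to the orbit.
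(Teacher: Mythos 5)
Your proof is correct and follows the same approach as the paper, which states the corollary as a one-line consequence of Corollary~\ref{cor:regular} and Theorem~\ref{thm:flat_0}; you fill in the straightforward composition, with the key observation that $G\overline{x}\subseteq[f=f(\overline{x})]$ allows passing from a local minimum over the level set to one over the orbit. (The paper also cites Fact~\ref{fact:chain_orbit_lsc}, but that fact is consumed inside the proof of Theorem~\ref{thm:flat_0} rather than in the gluing step, so omitting it from your argument is not a gap.)
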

\begin{proof}
    This a consequence of \cref{fact:chain_orbit_lsc}, \cref{cor:regular}, and \cref{thm:flat_0}.
\end{proof}

\cref{thm:flat_0} can be generalized to higher orders, as follows.

\begin{theorem}
\label{thm:flat_k}
    Suppose $f$ is $D\p k$ near $\overline{x} \in \R\p n$ and $f^{(k)}(\overline{x})\neq 0$. Then
\begin{gather*}
    \overline{x} \in \arg\loc\min \{ \|f\p{(k)}(x)\| : x \in G\overline{x} \} \\
    \Longrightarrow \\
    \exists \overline{v} \in \arg\max\{ |f^{(k)}(\overline{x})(v,\hdots,v)| : |v|=1\}: ~ C(\overline{v}) = 0.
\end{gather*}
\end{theorem}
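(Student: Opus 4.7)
The plan is to mirror the proof of \cref{thm:flat_0}, with the Lipschitz modulus replaced by $\|f^{(k)}(\cdot)\|$ and the set $\partial f(\overline{x})$ replaced by the unit sphere $S^{n-1}$. The key substitution is \cref{fact:chain_orbit_smooth} in place of \cref{fact:chain_orbit_lsc}, giving
\begin{equation*}
    \|f^{(k)}(g^{-1}\overline{x})\|^2 = \sup_{|v|=1}\bigl(f^{(k)}(\overline{x})(gv,\hdots,gv)\bigr)^2
\end{equation*}
for every $g\in G$. Hence $I_n$ locally solves $\inf_{g\in G}\|f^{(k)}(g^{-1}\overline{x})\|^2 = -\sup_{g\in G}\varphi(g)$, where $\varphi(g) = \inf_{v\in\R^n}\phi(v,g)$ and $\phi(v,g) = \delta_{S^{n-1}}(v) - \bigl(f^{(k)}(\overline{x})(gv,\hdots,gv)\bigr)^2$.

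To recover a condition valid for every $B\in\fg$ from a single application of parametric subdifferentiation, I would employ a multi-dimensional parameter rather than the single smooth curve used in \cref{thm:flat_0}. Pick a basis $B_1,\hdots,B_d$ of $\fg$, let $\gamma:\R^d\to G$ be $\gamma(t)=\exp(\sum_i t_i B_i)$, and set $\tau(v,t)=\phi(v,\gamma(t))+\delta_K(t)$ on $\R^n\times\R^d$, where $K$ is a small compact convex neighborhood of $0\in\R^d$. Then $\tau$ has compact domain $S^{n-1}\times K$ (so is level-bounded in $v$ locally uniformly in $t$), and being the sum of an indicator of a Clarke-regular set with a $C^\infty$ function, it is regular by \cite[Theorem 10.6]{rockafellar2009variational}. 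Fermat's rule applied to the local maximum of $\varphi\circ\gamma$ at $0$ yields $0\in\partial(\varphi\circ\gamma)(0)\subseteq\R^d$, and the parametric subdifferentiation theorem \cite[Corollary 10.11, Theorem 10.13]{rockafellar2009variational} then produces some
\begin{equation*}
    \overline{v}\in\arg\min_{v\in\R^n}\tau(v,0) = \arg\max\{|f^{(k)}(\overline{x})(v,\hdots,v)|:|v|=1\}
\end{equation*}
satisfying $(0,0)\in\partial\tau(\overline{v},0)\subseteq\R^n\times\R^d$.

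Writing $F(v,g)=f^{(k)}(\overline{x})(gv,\hdots,gv)$, symmetry of $f^{(k)}(\overline{x})$ gives $\nabla_v F(\overline{v},I_n) = k\,\nabla^k f(\overline{x})(\overline{v},\hdots,\overline{v})$, while \cref{fact:chain_orbit_smooth} yields $\partial_{t_i}F(\overline{v},\gamma(t))|_{t=0} = k\,\langle B_i\overline{v},\nabla^k f(\overline{x})(\overline{v},\hdots,\overline{v})\rangle$. Reading off the two blocks of $(0,0)\in\partial\tau(\overline{v},0)$ therefore produces
\begin{gather*}
    2k\,F(\overline{v},I_n)\,\nabla^k f(\overline{x})(\overline{v},\hdots,\overline{v}) \in N_{S^{n-1}}(\overline{v}) = \R\overline{v},\\
    F(\overline{v},I_n)\,\langle B_i\overline{v},\nabla^k f(\overline{x})(\overline{v},\hdots,\overline{v})\rangle = 0,\quad i=1,\hdots,d.
\end{gather*}

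Since $f^{(k)}(\overline{x})\neq 0$, $|F(\overline{v},I_n)|=\|f^{(k)}(\overline{x})\|>0$, so the first relation forces $\nabla^k f(\overline{x})(\overline{v},\hdots,\overline{v})=\lambda\overline{v}$ for some $\lambda\in\R$; taking the inner product with $\overline{v}$ identifies $\lambda=F(\overline{v},I_n)\neq 0$. The second relation then reduces to $\langle B_i\overline{v},\overline{v}\rangle = \langle B_i,\overline{v}\hspace{.3mm}\overline{v}^T\rangle = 0$ for each $i$, and hence $\langle B,\overline{v}\hspace{.3mm}\overline{v}^T\rangle=0$ for every $B\in\fg$ by linearity, so $C(\overline{v}) = P_{\mathrm{s}(\fg)}(\overline{v}\hspace{.3mm}\overline{v}^T) = 0$. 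The main subtlety I anticipate is verifying the hypotheses of \cite[Theorem 10.13]{rockafellar2009variational} for the multi-dimensional parameter $t$---a genuine strengthening of the single-curve setup used in \cref{thm:flat_0} that is needed to get ``for every $B\in\fg$'' from a single $\overline{v}$---and carrying out the chain-rule computation of $\partial_{t_i}F$ via \cref{fact:chain_orbit_smooth}; once those are done, the algebra concluding $C(\overline{v})=0$ is mechanical.
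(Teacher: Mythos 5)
Your proof is correct and follows the paper's strategy (reformulating local optimality over $G$ as $I_n$ being a local solution to $\inf_{g\in G}\|f^{(k)}(g^{-1}\overline{x})\|^2 = -\sup_{g\in G}\varphi(g)$, then applying Rockafellar--Wets parametric subdifferentiation to extract a constrained minimizer $\overline{v}$ with vanishing $t$-component of the subdifferential). The most substantive difference is your use of a multi-dimensional parameter $\gamma:\R^d\to G$, $\gamma(t)=\exp(\sum_i t_iB_i)$, rather than the single smooth curve $\gamma:(-\epsilon,\epsilon)\to G$ the paper uses. This is a genuine improvement: with a single curve, the $\overline{v}$ produced by [Theorem 10.13] a priori depends on the chosen direction $\gamma'(0)$, and one only obtains ``for each $u\in\fg$ there is some $\overline{v}_u$ with $\langle u,\overline{v}_u\overline{v}_u^T\rangle=0$''---which does not immediately yield a single $\overline{v}$ with $C(\overline{v})=0$. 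The paper glosses over this when it writes ``for all $u\in\fg$'' after $\overline{v}$ has already been fixed; your vector parameter produces one $\overline{v}$ satisfying $\langle B_i,\overline{v}\overline{v}^T\rangle=0$ for every basis element $B_i$ simultaneously, which cleanly gives $C(\overline{v})=0$.

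Two smaller differences. First, you take $\phi(v,g)=\delta_{S^{n-1}}(v)-F(v,g)^2$ and read off $N_{S^{n-1}}(\overline{v})=\R\overline{v}$ directly, whereas the paper uses $\delta_{B^n}$ and then invokes a Lagrangian (following Lim's singular-value Lagrangian for the constraint $|v|^{2k}=1$) to obtain $\nabla^k f(\overline{x})(\overline{v},\hdots,\overline{v})=\overline{\lambda}\overline{v}$. Both routes give the same eigenvector relation; yours is a little shorter. Second, a citation nit: you attribute regularity of $\tau=\delta_{S^{n-1}\times K}+(C^\infty)$ to \cite[Theorem 10.6]{rockafellar2009variational}, which is the chain rule, not a sum rule. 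The relevant fact is that adding a $C^1$ function preserves (Clarke) regularity (e.g.\ \cite[Exercise 8.8, Corollary 10.9]{rockafellar2009variational}). More importantly, the paper justifies regularity of its $\tau$ via \cite[Example 7.28]{rockafellar2009variational} because $B^n\times[-\epsilon/2,\epsilon/2]$ is convex; $S^{n-1}\times K$ is not convex, so in your variant you should instead appeal to the fact that $S^{n-1}$ is a $C^\infty$ submanifold, hence Clarke-regular with $N_{S^{n-1}}(\overline{v})=\R\overline{v}$. With that adjustment the argument is airtight, and the algebra you carry out to conclude $\nabla^k f(\overline{x})(\overline{v},\hdots,\overline{v})=F(\overline{v},I_n)\overline{v}$ with $F(\overline{v},I_n)\neq 0$ (since $f^{(k)}(\overline{x})\neq 0$), and then $\langle B,\overline{v}\overline{v}^T\rangle=0$ for all $B\in\fg$, is correct.
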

\begin{proof}
The proof naturally mirrors that of \cref{thm:flat_0}. 
By assumption, $I_n$ is a local solution to
\begin{align*}
    \inf_{g\in G} \|f^{(k)}(g^{-1}\overline{x})\|^2 
     & = \inf_{g\in G} \sup_{|v|=1} (f^{(k)}(g^{-1}\overline{x})(v,\hdots,v))^2 
     = \inf_{g\in G} \sup_{|v|=1} (f^{(k)}(\overline{x})(gv,\hdots,gv))^2 \\ 
    & = \inf_{g\in G} - \inf_{v \in \mathbb{R}^n} \phi(v,g)
    = -\sup_{g\in G} \varphi(g)
\end{align*}
where the second equality holds by \cref{fact:chain_orbit_smooth}.
Also, $\phi : \mathbb{R}^n \times G \rightarrow \overline{\mathbb{R}}$ and $\varphi : G \rightarrow \mathbb{R}_-$ are defined by
$$\phi(v,g) = \delta_{B\p n}(v) - (f^{(k)}(\overline{x})(gv,\hdots,gv))^2 ~~~ \text{and} ~~~ \varphi(g) = \inf_{v \in \mathbb{R}^n} \phi(v,g).$$ 
Let $\gamma:(-\epsilon,\epsilon)\rightarrow G$ be a smooth curve such that $\gamma(0) = I_n$ where $\epsilon>0$. Consider the function $\tau:\mathbb{R}^n \times \mathbb{R} \rightarrow \overline{\mathbb{R}}$ defined by $$\tau(v,t)= \phi(v,\gamma(t)) + \delta_{[-\epsilon/2,\epsilon/2]}(t).$$ It holds that 
$$\forall t\in [-\epsilon/2,\epsilon/2], ~~~ (\varphi \circ \gamma)(t) = \inf_{v \in \mathbb{R}^n} \tau(v,t).$$ 
Thus for all $(v,t) \in \mathrm{dom}\tau$, one has $\partial \tau (v,t) \subseteq $
\begin{equation*}
    \begin{pmatrix}
        N_{B\p n}(v) - 2k f^{(k)}(\overline{x})(\gamma(t)v,\hdots,\gamma(t)v) \gamma(t)^T \nabla^k f(\overline{x})(\gamma(t)v,\hdots,\gamma(t)v) \\
        N_{[-\epsilon/2,\epsilon/2]}(t) - 2k f^{(k)}(\overline{x})(\gamma(t)v,\hdots,\gamma(t)v) f^{(k)}(\overline{x})(\gamma'(t)v,\gamma(t)v,\hdots,\gamma(t)v)
    \end{pmatrix}
\end{equation*}
and for all $\overline{t} \in [-\epsilon/2,\epsilon/2] \subseteq \mathrm{dom}(\varphi \circ \gamma)$, 
\begin{equation*}
    \partial (\varphi \circ \gamma)(\overline{t}) \subseteq \bigcup_{\overline{v} \in\arg\min\limits_{v\in \mathbb{R}^n} \tau(v,\overline{t})} M(\overline{v},\overline{t}) ~~~\text{where}~~~M(\overline{v},\overline{t})= \{ y \in \mathbb{R} : (0,y) \in \partial \tau (\overline{v},\overline{t})\}.
\end{equation*}
In particular, since $0 \in \partial (\varphi \circ \gamma)(0)$, there exists $$\overline{v}\in \arg\min \{ \tau(v,0) : v\in \mathbb{R}^n\} = \arg\max \{ (f^{(k)}(\overline{x})(v,\hdots,v))^2 : |v|=1 \}$$ such that
\begin{equation}
\label{eq:param}
    (0,0) \in \partial \tau (\overline{v},0) \subseteq 
    \begin{pmatrix}
        N_{B\p n}(\overline{v}) - 2k f^{(k)}(\overline{x})(\overline{v},\hdots,\overline{v}) \nabla^{k} f(\overline{x})(\overline{v},\hdots,\overline{v}) \\
        - 2k f^{(k)}(\overline{x})(\overline{v},\hdots,\overline{v}) f^{(k)}(\overline{x})(\gamma'(0)\overline{v},\overline{v},\hdots,\overline{v})
    \end{pmatrix}.
\end{equation}
Consider the Lagrangian $$L(v,\lambda) = (f^{(k)}(\overline{x})(v,\hdots,v))^2 - \lambda (|x|^{2k}-1),$$ following Lim \cite[Section 4]{lim2005singular}. Since the constraint is qualified by gradient independence, the first-order optimality condition ensures the existence of $\overline{\lambda} \in \mathbb{R}$ such that 
$$\nabla_v L(\overline{v},\overline{\lambda}) = 2k f^{(k)}(\overline{x})(\overline{v},\hdots,\overline{v}) \nabla^{k} f (\overline{x})(\overline{v},\hdots,\overline{v}) - 2k \overline{\lambda} \overline{v} = 0,$$
that is to say,
$$f^{(k)}(\overline{x})(\overline{v},\hdots,\overline{v}) \nabla^{k} f(\overline{x})(\overline{v},\hdots,\overline{v}) = \overline{\lambda} \overline{v}.$$
Taking the inner product with $\overline{v}$ yields
$$ 0 \neq |f^{(k)}(\overline{x})|^2 = (f^{(k)}(\overline{x})(\overline{v},\hdots,\overline{v}))^2 = \overline{\lambda} |\overline{v}|^2 = \overline{\lambda}.$$
The inclusion in \eqref{eq:param} then implies that for all $u \in \mathfrak{g}$, we have
   \begin{align*}
       0 & = f^{(k)}(\overline{x})(\overline{v},\hdots,\overline{v}) f^{(k)}(\overline{x})(u\overline{v},\overline{v},\hdots,\overline{v}) \\
       & = f^{(k)}(\overline{x})(\overline{v},\hdots,\overline{v}) \langle u \overline{v} , \nabla^k f(\overline{x})(\overline{v},\hdots,\overline{v}) \rangle \\
      & = \langle u \overline{v} , \overline{\lambda}\overline{v} \rangle 
      =  \overline{\lambda} \langle u , \overline{v}\hspace{.4mm}\overline{v}^T \rangle.
   \end{align*}
As in the proof of \cref{thm:flat_0}, we conclude that $C(\overline{v}) = 0$.
\end{proof}

\begin{corollary}
\label{cor:flat_k}
    Suppose $f$ is $D\p k$ near $\overline{x} \in \R\p n$ with $k \in \mathbb{N}^*$, $f\p{(i)}(\overline{x})=0$ for all $i\in \llbracket 1,k-1\rrbracket$, and $f^{(k)}(\overline{x})\neq 0$. If $\overline{x}$ is flat, then there exists $\overline{v} \in \arg\max\{ |f^{(k)}(\overline{x})(v,\hdots,v)| : |v|=1\}$ such that $C(\overline{v}) = 0$.
\end{corollary}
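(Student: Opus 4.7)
The plan is to reduce the conclusion to \cref{thm:flat_k} by showing that flatness of $\overline{x}$ forces it to be a local minimum of $\|f^{(k)}(\cdot)\|$ along the orbit $G\overline{x}$. Since the hypothesis $f^{(k)}(\overline{x})\neq 0$ is already on hand, \cref{thm:flat_k} will then immediately deliver the desired $\overline{v}$ with $C(\overline{v})=0$. This is the smooth counterpart of how \cref{cor:flat_0} is obtained from \cref{thm:flat_0}, with $\lip f$ replaced by $\|f^{(k)}\|$ and \cref{prop:regular} replaced by \cref{prop:smooth}.

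First I would note that the orbit $G\overline{x}$ lies inside the level set $[f=f(\overline{x})]$ by invariance of $f$. Next, I would propagate the vanishing of lower-order derivatives from $\overline{x}$ to the rest of the orbit: writing any $y\in G\overline{x}$ as $y=g^{-1}\overline{x}$ for some $g\in G$, \cref{fact:chain_orbit_smooth} yields
$$f^{(i)}(y)(v_1,\hdots,v_i) = f^{(i)}(\overline{x})(gv_1,\hdots,gv_i) = 0 \qquad \forall\, i\in\llbracket 1,k-1\rrbracket,$$
since $f^{(i)}(\overline{x})=0$ on that range by hypothesis. This makes \cref{prop:smooth} applicable at every point of $G\overline{x}$ near $\overline{x}$, giving the asymptotic
$$\cf(x,r) = \frac{\|f^{(k)}(x)\|}{k!}\, r^k + o(r^k)$$
for all $x\in G\overline{x}$ in a neighborhood of $\overline{x}$ (including $x=\overline{x}$ itself).

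Now I would invoke the definition of flatness: there exists a neighborhood $U$ of $\overline{x}$ in $[f=f(\overline{x})]$ such that for each $x\in U\setminus\{\overline{x}\}$ one has $\cf(\overline{x},r)\leq \cf(x,r)$ for all sufficiently small $r>0$. Restricting to $x\in G\overline{x}\cap U$, dividing by $r^k$, and letting $r\searrow 0$ produces $\|f^{(k)}(\overline{x})\|\leq \|f^{(k)}(x)\|$, so that $\overline{x}\in \arg\loc\min\{\|f^{(k)}(x)\|: x\in G\overline{x}\}$. An application of \cref{thm:flat_k} then finishes the proof.

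The argument is essentially routine given the machinery already in place; the only step that required care was the propagation of the vanishing lower-order derivatives along the orbit, which \cref{fact:chain_orbit_smooth} handles cleanly. No genuine obstacle is anticipated, so this corollary can be stated as a short consequence of \cref{fact:chain_orbit_smooth}, \cref{prop:smooth}, and \cref{thm:flat_k}, in direct parallel with \cref{cor:flat_0}.
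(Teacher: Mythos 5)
Your proposal is correct and takes essentially the same route as the paper, whose one-line proof cites \cref{fact:chain_orbit_smooth}, \cref{cor:smooth}, and \cref{thm:flat_k}. You unfold \cref{cor:smooth} into a direct application of \cref{prop:smooth} restricted to the orbit $G\overline{x}$ (after propagating the vanishing of lower-order derivatives along the orbit via \cref{fact:chain_orbit_smooth}), which is in fact a slightly more careful reading, since \cref{cor:smooth} as stated requires $f^{(i)}=0$ on the whole level set $[f=f(\overline{x})]$ near $\overline{x}$, whereas the hypothesis of this corollary only gives vanishing at $\overline{x}$ and hence, by invariance, on the orbit.
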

\begin{proof}
    This a consequence of \cref{fact:chain_orbit_smooth}, \cref{cor:smooth}, and \cref{thm:flat_k}.
\end{proof}


\subsection{Matrix factorization}
\label{subsec:Matrix factorization}

We seek to establish converse results to those developed in the previous section, in the case of matrix factorization, in a desire to characterize flat minima. 
Given $m,n,r\in\N\p *$ and $M\in \R\p{m\times n}$, the map 
    \begin{equation*}
    \begin{array}{cccc}
         F: & \mathbb{R}^{m\times r}\times \mathbb{R}^{r\times n} & \longrightarrow & \mathbb{R} \\
        & (X,Y) & \longmapsto & XY-M
    \end{array}
    \end{equation*}
is invariant under the action of $\GL(r)$ on $\R\p{m\times r}\times \R\p{r\times n}$ defined by $$(X,Y,A)\mapsto (XA,A\p {-1}Y).$$ As shown in \cite[Example 5.3]{josz2025subdifferentiation}, for any locally Lipschitz function $g:\R\p{m\times n}\to \R$, $g\circ F$ then admits the conserved quantity $$C(X,Y) = X\p T X - YY\p T$$ (see \cite{arora2018optimization,du2018algorithmic,josz2023global,marcotte2024abide} for other derivations). A pair $(X,Y)$ is balanced if $X\p T X = Y Y\p T$ \cite[Chapter 6]{helmke1994optimization}. We successively treat the cases where $g = \|\cdot\|_1$ and $g=\|\cdot\|_F\p 2/2$. We begin with a simple fact, which is a special case of \cite[Proposition 1]{ding2014introduction}.

\begin{fact}
\label{fact:commute}
    Let $D = \diag (d_1 I_{k_1},\dots, d_\ell I_{k_\ell})$ with $d_1 > \cdots > d_\ell$ and $k_1+\cdots + k_\ell = n$, and let $P \in \O(n)$. Then $$D = PDP\p T~~~\Longleftrightarrow ~~~ \forall i\in\llbracket 1,\ell\rrbracket, ~ \exists P_{k_i} \in \O(k_i):~ P = \diag (P_{k_1},\dots, P_{k_\ell}).$$ 
\end{fact}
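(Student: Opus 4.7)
The plan is to prove both directions by a direct block-matrix calculation, treating $P$ as an $\ell\times\ell$ array of blocks $P_{ij}\in\R^{k_i\times k_j}$ conforming to the block structure of $D$.

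For the implication ($\Leftarrow$), I would simply compute
\[
PDP^T = \diag(P_{k_1},\dots,P_{k_\ell})\,\diag(d_1 I_{k_1},\dots,d_\ell I_{k_\ell})\,\diag(P_{k_1}^T,\dots,P_{k_\ell}^T) = \diag(d_1 P_{k_1}P_{k_1}^T,\dots,d_\ell P_{k_\ell}P_{k_\ell}^T),
\]
which equals $D$ because each $P_{k_i}\in\O(k_i)$ satisfies $P_{k_i}P_{k_i}^T = I_{k_i}$.

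For the implication ($\Rightarrow$), I would rewrite the hypothesis $D=PDP^T$ as the commutation relation $DP=PD$, using that $P$ is orthogonal. Expanding blockwise, the $(i,j)$ block of $DP$ is $d_i P_{ij}$ and that of $PD$ is $d_j P_{ij}$, so $(d_i-d_j)P_{ij}=0$ for all $i,j$. Since the eigenvalues $d_1,\dots,d_\ell$ are pairwise distinct, this forces $P_{ij}=0$ whenever $i\neq j$, so $P=\diag(P_{11},\dots,P_{\ell\ell})$ is block diagonal. Finally, the orthogonality $PP^T=I_n$ reads blockwise as $P_{ii}P_{ii}^T=I_{k_i}$, so each diagonal block lies in $\O(k_i)$, and setting $P_{k_i}:=P_{ii}$ gives the claim.

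There is no real obstacle here: the argument is a textbook consequence of the fact that a matrix commuting with a diagonal matrix having distinct diagonal entries (or, more generally, distinct eigenvalues grouped in blocks) must respect its eigenspace decomposition. The only care needed is to go from $D=PDP^T$ to $DP=PD$ via $P^T=P^{-1}$, and to check that orthogonality of the full matrix $P$ restricts to orthogonality of each diagonal block once off-diagonal blocks vanish.
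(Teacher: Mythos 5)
Your proof is correct and takes essentially the same approach as the paper: both reduce $D=PDP^T$ to the relation $(d_i-d_j)\cdot(\text{relevant entry or block of }P)=0$ and invoke distinctness of the $d_i$'s. The paper works columnwise via $Dp_i = PDP^T p_i = d_i p_i$ while you work blockwise via $DP=PD$; this is a cosmetic difference only.
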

\begin{proof}
    One direction is obvious. Suppose $D = PDP\p T$ and let $p_i = (p_{ij})_{j}$ denote the $i$\textsuperscript{th} column of $P$. Then $(d_{j} p_{ij})_j = Dp_i = PDP\p T p_i = \sum_{j} d_{j} p_jp_j\p T p_i = d_{i} p_i$, i.e., $(d_{j} - d_{i})p_{ij} = 0$.
\end{proof}

\cref{fact:commute} implies that for any diagonal matrix $D$ of order $n$ with nonnegative entries and $P\in \O(n)$, one has $DP = PD$ iff $D\p {1/2}P = PD\p {1/2}$. In particular, given $A\succcurlyeq 0$, there is a unique $B\succcurlyeq 0$ such that $A = B\p 2$, called the square root of $A$. (Indeed if $B\p 2=\widetilde B\p 2$ with $B,\widetilde B\succcurlyeq 0$, then given some eigendecompositions $B = P D P\p T$ and $\widetilde B = \widetilde P \widetilde D \widetilde P\p T$, we have $D = \widetilde D$ and $\widetilde P\p T P D\p 2 = D\p 2 \widetilde P\p T P$. Thus $\widetilde P\p TP D = D \widetilde P\p T P$ and $B=\widetilde B$.)

The optimal value in the lemma below is already known \cite[Lemma 1]{srebro2005rank}, but the local-global property appears to be new. Also, the characterization of global minima via the balance condition on $X$ and $Y$ was stated in \cite[Lemma 2.2]{ding2024flat}, although the proof seems invalid. A similar result constrained to a single orbit was obtained in \cite[Theorem 3.7]{helmke1994optimization}.


\begin{lemma}
    \label{lemma:nuclear} 
    If $r\geq \rank(M)$, then $2\|M\|_* = \min \{ \|X\|_F\p 2+\|Y\|_F\p 2 : XY = M \}$ and a feasible point $(X,Y)$ is a global minimum iff it is a local minimum iff $X\p T X = Y Y\p T$.
\end{lemma}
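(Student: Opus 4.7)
The plan is to split the lemma into three pieces (the remaining ``global $\Rightarrow$ local'' direction being tautological): the value of the infimum (with attainment), the implication local $\Rightarrow$ balance, and balance $\Rightarrow$ global.

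For the value of the infimum, I would start from the trace--duality inequality $\|AB\|_* \leq \|A\|_F \|B\|_F$, which follows by writing $AB = U\Sigma V^T$ with $U,V$ having orthonormal columns, so that $\|AB\|_* = \mathrm{tr}(U^TABV) = \langle A^TU, BV\rangle_F \leq \|A^TU\|_F \|BV\|_F \leq \|A\|_F \|B\|_F$. Combined with AM--GM, this gives $\|X\|_F^2 + \|Y\|_F^2 \geq 2\|X\|_F \|Y\|_F \geq 2\|XY\|_* = 2\|M\|_*$ for every feasible $(X,Y)$. For attainment (which simultaneously verifies that the feasible set is nonempty when $r \geq \rank M$; otherwise the statement is vacuous), take the SVD $M = U\Sigma V^T$ padded to rank $r$ and set $X = U\Sigma^{1/2}$, $Y = \Sigma^{1/2} V^T$: then $XY = M$ and $\|X\|_F^2 = \|Y\|_F^2 = \mathrm{tr}(\Sigma) = \|M\|_*$.

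For local $\Rightarrow$ balance, I would exploit the $\GL(r,\R)$ symmetry $(X,Y) \mapsto (XA, A^{-1}Y)$, which preserves $XY = M$. If $(X,Y)$ is a local minimum, then $A = I_r$ is an interior local minimum of the smooth map $\varphi:\GL(r,\R) \to \R$ defined by $\varphi(A) = \|XA\|_F^2 + \|A^{-1}Y\|_F^2$. A direct expansion at $A = I_r + tB$, using $A^{-1} = I_r - tB + O(t^2)$, yields
\[ \varphi(I_r + tB) = \|X\|_F^2 + \|Y\|_F^2 + 2t\,\langle X^TX - YY^T, B\rangle_F + O(t^2). \]
Fermat's rule then forces $\langle X^TX - YY^T, B\rangle_F = 0$ for every $B \in \R^{r\times r}$, i.e., $X^TX = YY^T$.

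For balance $\Rightarrow$ global, I would simply compute. From $XY = M$ and $X^TX = YY^T$,
\[ MM^T = XYY^TX^T = X(X^TX)X^T = (XX^T)^2, \]
so $XX^T$ is the unique PSD square root of $MM^T$ and $\|X\|_F^2 = \mathrm{tr}(XX^T) = \mathrm{tr}((MM^T)^{1/2}) = \|M\|_*$. The symmetric identity $M^TM = Y^T(X^TX)Y = (Y^TY)^2$ gives $\|Y\|_F^2 = \|M\|_*$, hence $\|X\|_F^2 + \|Y\|_F^2 = 2\|M\|_*$, i.e., $(X,Y)$ is a global minimum. The only nontrivial bookkeeping (my main concern) is to ensure that the $\GL(r,\R)$-variation really does place $A = I_r$ in the interior of an open set of feasible parameters so Fermat's rule applies unconstrainedly, and to handle the rank-padding convention cleanly; beyond that, everything reduces to first-order calculus and a trace computation.
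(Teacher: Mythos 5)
Your proof is correct, and the third step (balance $\Rightarrow$ global) takes a genuinely different and, I'd say, cleaner route than the paper's. For the value of the infimum, you and the paper use essentially the same chain $\|M\|_*\leq \|X\|_F\|Y\|_F\leq (\|X\|_F^2+\|Y\|_F^2)/2$; the paper unpacks the first inequality row-by-row, while you invoke trace duality directly — same content. For local $\Rightarrow$ balance, the paper cites its own \cref{prop:min_norm} (a general statement that a local minimizer of $|x|$ over a $G$-orbit has $C(\overline x)=0$, proved by a smooth curve through $I_n$), whereas you redo that computation explicitly via the first-order expansion of $\|XA\|_F^2+\|A^{-1}Y\|_F^2$ at $A=I_r$; these are the same idea, yours just self-contained. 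The real divergence is in balance $\Rightarrow$ global: the paper writes compact SVDs for $X$ and $Y$, derives $\Sigma_X=\Sigma_Y$, and then needs the commutation fact (\cref{fact:commute}) to show $V_X^T U_Y$ commutes with $\Sigma_X$ and hence $\Sigma_X=\sqrt{\Sigma_M}$ — a somewhat delicate argument that the author notes corrects an invalid step in \cite{ding2024flat}. Your observation that $MM^T=(XX^T)^2$ and $M^TM=(Y^TY)^2$ under the balance condition, so $XX^T$ and $Y^TY$ are the PSD square roots of $MM^T$ and $M^TM$, followed by the trace identity $\mathrm{tr}((MM^T)^{1/2})=\|M\|_*$, bypasses the SVD commutation entirely and gives the optimal value in two lines. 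What the paper's heavier argument buys is the additional structural information $\Sigma_X=\Sigma_Y=\sqrt{\Sigma_M}$ (useful elsewhere in the paper), but for the statement of this lemma your route is shorter and more robust. Your bookkeeping concerns are both fine: $\GL(r,\R)$ is open in $\R^{r\times r}$ so $I_r$ is interior and Fermat applies unconstrainedly, and the rank-padding of the SVD is the standard move to realize the attainment point $(X,Y)\in\R^{m\times r}\times\R^{r\times n}$ when $\mathrm{rank}\,M < r$.
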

\begin{proof}
    Consider a singular value decomposition $M = U\Sigma V\p T$ and $p=\rank M$. Let $x_i\p T$ denote the rows of $X$ and $y_j$ the columns of $Y$. If $XY = \Sigma$, then
    $$\|\Sigma\|_* = \sum_{i=1}\p p \langle x_i,y_i\rangle \leq \sum_{i=1}\p p |x_i||y_i| \leq \sqrt{\sum_{i=1}\p p |x_i|\p 2} \sqrt{\sum_{i=1}\p p |y_i|\p 2} \leq \|X\|_F \|Y\|_F \leq (\|X\|_F\p 2 + \|Y\|_F\p 2)/2.$$
    If $XY=M$, then $U\p T X Y V = \Sigma$ and $2\|M\|_* = 2\|\Sigma\|_* \leq \|U\p T X\|_F\p 2 + \|YV\|_F\p 2 = \|X\|_F\p 2 + \|Y\|_F\p 2$, with equality when $(X,Y)$ equals $(U\Sigma\p{1/2},\Sigma\p {1/2} V\p T)$ up to some zero rows/columns. If $(X,Y)$ is a local minimum then $XY = M$ and $X\p T X = Y Y\p T$ by \cref{prop:min_norm}. Conversely, consider some compact singular value decompositions $X = U_X\Sigma_X V_X\p T$ and $Y = U_Y \Sigma_Y V_Y\p T$. By \cref{lemma:balanced} below, $\Sigma_X = \Sigma_Y = \sqrt{\Sigma}$. Thus $\|X\|_F\p 2 + \|Y\|_F\p 2 = 2\|M\|_*$ and $(X,Y)$ is globally optimal.
\end{proof}

The following result can also be deduced from \cite[Lemma 3.1]{ouyang2025burer}.

\begin{lemma}
\label{lemma:balanced}
    If $XY=M$ and $X\p TX = Y Y\p T$, then $\Sigma_X = \Sigma_Y = \sqrt{\Sigma}$ given some compact singular value decompositions $X = U_X\Sigma_X V_X\p T$ and $Y = U_Y \Sigma_Y V_Y\p T$
\end{lemma}
\begin{proof}
    Plugging in the decompositions yields $V_X \Sigma_X\p 2 V_X\p T = U_Y \Sigma_Y\p 2 U_Y\p T$, so that $\Sigma_X = \Sigma_Y$ and $\Sigma_X\p 2 = (V_X\p T U_Y) \Sigma_X\p 2 (V_X\p T U_Y)\p T$. In the proof of \cite[Lemma 2.2]{ding2024flat}, one concludes that $V_X\p T U_Y = I_r$, but that seems untrue (think of $\Sigma_X = I_r$). In fact, $\Sigma_X\p 2 (V_X\p T U_Y) = (V_X\p T U_Y) \Sigma_X\p 2$ and thus $\Sigma_X (V_X\p T U_Y) = (V_X\p T U_Y) \Sigma_X$ by \cref{fact:commute}. Using a compact singular value decomposition $M = U\Sigma V\p T$, this yields $XY = U_X \Sigma_X V_X\p T U_Y \Sigma_X V_Y\p T = U_X V_X\p T U_Y \Sigma_X\p 2 V_Y\p T = U \Sigma V\p T$, and so $\Sigma_X = \Sigma_Y = \sqrt{\Sigma}$.
\end{proof}

Using \cref{lemma:nuclear}, we can show that the converse of \cref{thm:flat_0} holds in $\ell_1$-matrix factorization.

\begin{proposition}
    \label{prop:mf_1}
    Given $M \in \R\p{m\times n}$, let $f:\R\p{m\times r}\times \R\p{r\times n}\to \R$ be defined by $f(X,Y) = \|XY-M\|_1$. Let $\overline{X}\hspace{.4mm}\overline{Y} = M$. The following are equivalent:
    \begin{enumerate}[label=\rm{(\roman{*})}]
    \item \label{item:mf_1_global} $(\overline{X},\overline{Y}) \in \arg\min \{ \lip f(X,Y) : XY = M\}$;
    \item \label{item:mf_1_local} $(\overline{X},\overline{Y}) \in \arg\loc\min \{ \lip f(X,Y) : XY = M\}$;
    \item \label{item:mf_1_balance} $\exists(\overline{H},\overline{K}) \in \arg\max
 \{ \|H\|_F\p 2+\|K\|_F\p 2 : (H,K) \in \partial f(\overline{X},\overline{Y})\} : ~ \overline{H}\p T \overline{H} = \overline{K}\hspace{.4mm}\overline{K}\p T$.
    \end{enumerate}
    There exist global minima since for all $XY = M$,
    $\lip f(X,Y) \geq \sqrt{\|X\|_F\p 2 + \|Y\|_F\p 2}/\sqrt{m+n}$.
\end{proposition}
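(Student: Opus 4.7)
The plan rests on the formula $\partial f(X,Y) = \{(ZY\p T, X\p T Z) : \|Z\|_\infty \leq 1\}$ and $\lip f(X,Y) = \max_{\|Z\|_\infty \leq 1}\sqrt{\|ZY\p T\|_F\p 2 + \|X\p T Z\|_F\p 2}$ on the constraint set $\{XY = M\}$, both of which follow directly from \cref{fact:chain_reg} applied to $f = \|\cdot\|_1 \circ F$ with $F(X,Y) = XY - M$, since $\partial \|\cdot\|_1(0)$ is the unit $\ell_\infty$-ball and $F'(X,Y)\p*(Z) = (ZY\p T, X\p T Z)$.

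For the coercivity bound, I would restrict to rank-one sign matrices $Z = \sigma \tau\p T$ with $\sigma \in \{-1,+1\}\p m$ and $\tau \in \{-1,+1\}\p n$, for which $\|Z\|_\infty = 1$ and
\[\|ZY\p T\|_F\p 2 + \|X\p T Z\|_F\p 2 = m|Y\tau|\p 2 + n|X\p T \sigma|\p 2.\]
Averaging over uniformly random signs gives expected value $m\|Y\|_F\p 2 + n\|X\|_F\p 2 \geq \|X\|_F\p 2 + \|Y\|_F\p 2$, so some deterministic choice of $\sigma,\tau$ witnesses $\lip f(X,Y)\p 2 \geq \|X\|_F\p 2 + \|Y\|_F\p 2 \geq (\|X\|_F\p 2+\|Y\|_F\p 2)/(m+n)$. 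Since $\lip f$ is continuous on $\{XY = M\}$ and this bound makes it coercive there, the infimum is attained and global minima exist.

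The implication (i) $\Longrightarrow$ (ii) is trivial. For (ii) $\Longrightarrow$ (iii), the orbit $G\cdot(\overline{X},\overline{Y})$ under the $\GL(r,\R)$-action $(X,Y,A)\mapsto (XA, A\p{-1}Y)$ lies inside $\{XY = M\}$, so local minimality of $\lip f$ on the constraint set implies local minimality on the orbit. \cref{thm:flat_0}, applied with the conserved quantity $C(X,Y) = X\p T X - YY\p T$ derived earlier, then produces $(\overline{H},\overline{K}) \in \arg\max\{\|H\|_F\p 2+\|K\|_F\p 2 : (H,K)\in\partial f(\overline{X},\overline{Y})\}$ satisfying $\overline{H}\p T\overline{H} = \overline{K}\,\overline{K}\p T$.

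The key step is (iii) $\Longrightarrow$ (i), where I would exploit \cref{lemma:nuclear} in both directions. Write $\overline{H} = \overline{Z}\,\overline{Y}\p T$ and $\overline{K} = \overline{X}\p T\overline{Z}$ for some $\overline{Z}$ with $\|\overline{Z}\|_\infty \leq 1$. The balance condition combined with the equality case of \cref{lemma:nuclear} yields
\[\lip f(\overline{X},\overline{Y})\p 2 = \|\overline{H}\|_F\p 2 + \|\overline{K}\|_F\p 2 = 2\|\overline{H}\,\overline{K}\|_* = 2\|\overline{Z}M\p T\overline{Z}\|_*,\]
since $\overline{Y}\p T\overline{X}\p T = M\p T$. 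For any feasible $(X,Y)$, the pair $(\overline{Z}Y\p T, X\p T\overline{Z})$ belongs to $\partial f(X,Y)$ and its product $(\overline{Z}Y\p T)(X\p T\overline{Z}) = \overline{Z}M\p T\overline{Z}$ is \emph{exactly the same matrix}. The inequality direction of \cref{lemma:nuclear} applied at $(X,Y)$ then forces
\[\lip f(X,Y)\p 2 \geq \|\overline{Z}Y\p T\|_F\p 2 + \|X\p T\overline{Z}\|_F\p 2 \geq 2\|\overline{Z}M\p T\overline{Z}\|_* = \lip f(\overline{X},\overline{Y})\p 2.\]
The essential insight, and the main obstacle until one spots it, is this coordinated use of \cref{lemma:nuclear}: the balance condition saturates the AM--GM lower bound at $(\overline{X},\overline{Y})$, while the same bound taken at any competing feasible point goes in the favorable direction and locks in the optimality.
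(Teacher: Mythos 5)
Your proposal follows the paper's proof on all three equivalences: (i) $\Rightarrow$ (ii) is trivial, (ii) $\Rightarrow$ (iii) invokes \cref{thm:flat_0}, and (iii) $\Rightarrow$ (i) exploits \cref{lemma:nuclear} in exactly the same two-directional way — the balance condition makes $(\overline{H},\overline{K})$ attain the nuclear-norm optimum for the fixed target $\overline{Z}M^T\overline{Z}$, while at any competitor $(X,Y)$ the same $\overline{Z}$ produces a subgradient $(\overline{Z}Y^T, X^T\overline{Z})$ with the very same product, so the inequality side of the lemma closes the loop. The one place you deviate is the ancillary coercivity estimate: where the paper takes single-entry sign matrices $\Lambda$ to extract individual rows of $X$ and columns of $Y$, you average $\|\sigma\tau^T Y^T\|_F^2 + \|X^T\sigma\tau^T\|_F^2 = m|Y\tau|^2 + n|X^T\sigma|^2$ over random $\pm 1$ signs to get the strictly tighter $\lip f(X,Y)^2 \geq m\|Y\|_F^2 + n\|X\|_F^2 \geq \|X\|_F^2+\|Y\|_F^2$; both witnesses serve the existence claim equally well.
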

\begin{proof}
\ref{item:mf_1_global} $\Longrightarrow$ \ref{item:mf_1_local} is obvious. \ref{item:mf_1_local} $\Longrightarrow$ \ref{item:mf_1_balance} is due to \cref{thm:flat_0}. \ref{item:mf_1_balance} $\Longrightarrow$ \ref{item:mf_1_global} Observe that $f(X,Y)=\|F(X,Y)\|_1$ where $F(X,Y) = XY-M$. We thus compute $F(X,Y)(H,K) = XK+HY$ and $F'(X,Y)\p *(\Lambda)=(X\p T \Lambda,\Lambda Y\p T)$. By \cref{fact:chain_reg}, 
$$\partial f(X,Y) = \left\{ \begin{pmatrix}
    \Lambda Y^T \\
            X^T \Lambda
\end{pmatrix} : \Lambda \in \mathrm{sign}(XY-M)\right\}.$$
By assumption, there exists $\overline{\Lambda}\in [-1,1]\p {m\times n}$ such that $(\overline{H},\overline{K}) = (\overline{\Lambda}\hspace{.4mm} \overline{Y}^T, \overline{X}^T \overline{\Lambda})$. Hence $\overline{H}\hspace{.4mm}\overline{K} = \overline{\Lambda}\hspace{.4mm} \overline{Y}^T \overline{X}^T \overline{\Lambda} = \overline{\Lambda}\hspace{.4mm} M\p T \overline{\Lambda}$. Since $\overline{H}\p T \overline{H}=\overline{K}\hspace{.4mm}\overline{K}\p T$, by \cref{lemma:nuclear},
$$(\overline{H},\overline{K}) \in \arg\min\{ \|H\|_F\p 2+\|K\|_F\p 2 : HK = \overline{\Lambda}\hspace{.4mm} M\p T \overline{\Lambda} \}.$$
Thus, whenever $XY=M$, we have
$$\lip f(X,Y) \geq \sqrt{\|\overline{\Lambda} Y^T\|_F^2+\|X^T\overline{\Lambda}\|_F^2} \geq \sqrt{\|\overline{H}\|_F\p 2+\|\overline{K}\|_F\p 2} = \lip f(\overline{X},\overline{Y}).$$
Finally, let $x_i\p T$ denote the rows of $X$ and $y_j$ denote the columns of $Y$. We have 
\begin{align*}
    \lip f(X,Y) & = \max_{\Lambda \in [-1,1]^{m\times n}} \sqrt{\|\Lambda Y^T\|_F^2+\|X^T\Lambda\|_F^2} \geq 
    \max\{|x_1|,\hdots,|x_m|,|y_1|,\hdots,|y_n|\} \\
    & \geq \sqrt{|x_1|\p 2+\cdots+|x_m|\p 2+|y_1|\p 2+\cdots+|y_n|\p 2}/\sqrt{m+n} \\
    & = \sqrt{\|X\|_F\p 2 + \|Y\|_F\p 2}/\sqrt{m+n}.
\end{align*}
To see this, successively take $ \Lambda $ with a single nonzero entry, equal to one, to generate the rows of $X$ and the columns of $Y$. 
\end{proof}

We now consider matrix factorization with the Frobenius norm. It is known to have no spurious local minima, as proved in  \cite{baldi1989neural} when $m=n$, and in \cite{valavi2020revisiting} for any $m,n$. We will show that it also has no spurious flat minima, i.e., flat minima that are not globally flat. To see why, it is useful to recall when equality holds in $\|XY\|_2 = \|X\|_2\|Y\|_2$ where $(X,Y)\in \R\p{m\times r }\times \R\p{r\times n}$. 

Given a real-valued matrix $A$ with a singular value decomposition $A=U_A\Sigma_A V_A\p T$, let $S_A = \{ v\in \R\p n : |Av|=\|A\|_2|v|\}$ and $d_A=\dim S_A$, i.e., the multiplicity of the maximal singular value of $A$. Let $\widehat U_A$ (resp. $\widehat{V}_A$) denote the first $d_A$ columns of $U_A$ (resp. $V_A$), in other words, the maximal left (resp. right) singular vectors of $A$.

\begin{fact}
\label{fact:product_spectral}
    $\|XY\|_2 = \|X\|_2\|Y\|_2 \Longleftrightarrow S_X \cap YS_Y\neq \{0\} \Longleftrightarrow \im\widehat{V}_X\cap \im \widehat{U}_Y\neq \{0\}$.
\end{fact}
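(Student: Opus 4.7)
The plan is to prove the first equivalence by chasing the chain of inequalities $|XYz|\leq \|X\|_2|Yz|\leq \|X\|_2\|Y\|_2 |z|$, and then to prove the second equivalence by translating $S_A$ into spans of singular vectors via an SVD. To avoid distracting edge cases I will tacitly assume $X,Y\neq 0$ (the case $X=0$ or $Y=0$ with the other factor also zero is the only one where the statement needs an additional convention).

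For the first equivalence, taking the supremum over $|z|=1$ in $|XYz|\leq \|X\|_2|Yz|\leq \|X\|_2\|Y\|_2$ yields $\|XY\|_2\leq \|X\|_2\|Y\|_2$. By compactness of the unit sphere this supremum is attained at some $\overline z$, so $\|XY\|_2=\|X\|_2\|Y\|_2$ if and only if $|X(Y\overline z)|=\|X\|_2|Y\overline z|$ and $|Y\overline z|=\|Y\|_2|\overline z|$, i.e., $\overline z\in S_Y$ and $Y\overline z\in S_X$. Since $\|Y\|_2>0$, $Y\overline z\neq 0$, so this is exactly the statement $S_X\cap YS_Y\neq\{0\}$. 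Conversely, from any nonzero $w=Y\overline z\in S_X\cap YS_Y$ with $\overline z\in S_Y$ (normalized so $|\overline z|=1$), I recover $|XY\overline z|=\|X\|_2|Y\overline z|=\|X\|_2\|Y\|_2$, giving the reverse inequality.

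For the second equivalence, I fix an SVD $A=U\Sigma V^T$ and write $v=\sum_i\alpha_i v_i$ to get $|Av|^2=\sum_i\sigma_i^2\alpha_i^2$ while $\|A\|_2^2|v|^2=\sigma_1^2\sum_i\alpha_i^2$. Since $\sigma_1=\cdots=\sigma_{d_A}>\sigma_{d_A+1}\geq\cdots$, equality $|Av|=\|A\|_2|v|$ forces $\alpha_i=0$ for $i>d_A$, so $S_A=\im\widehat V_A$. Applying this to $X$ gives $S_X=\im\widehat V_X$. For the other piece, the identity $Y\widehat V_Y=\widehat U_Y\,\mathrm{diag}(\sigma_1(Y),\dots,\sigma_{d_Y}(Y))=\|Y\|_2\,\widehat U_Y$ (the top $d_Y$ singular values coincide with $\|Y\|_2$) together with $S_Y=\im\widehat V_Y$ shows that $YS_Y=\im\widehat U_Y$. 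Combining these identifications yields $S_X\cap YS_Y=\im\widehat V_X\cap \im\widehat U_Y$, whence the second equivalence.

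The routine part is really the first equivalence, which is essentially the equality case in the sub-multiplicativity of the spectral norm. The only subtle point is the degenerate case where one of $X,Y$ is zero, in which both sides of the first equivalence are zero but one must interpret $\widehat V_X$ or $\widehat U_Y$ as the full ambient basis so that the intersection statement remains sensible; otherwise the statement should be read as implicitly restricted to $X,Y\neq 0$, which is all that is needed in subsequent applications.
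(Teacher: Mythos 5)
Your proof takes essentially the same route as the paper's: track where equality holds in the chain $|XYv|\leq \|X\|_2|Yv|\leq \|X\|_2\|Y\|_2|v|$ for the first equivalence, then identify $S_X=\im\widehat V_X$ and $YS_Y=\im\widehat U_Y$ via an SVD for the second. You are more explicit than the paper about the attainment argument, the SVD computation, and the degenerate case $Y=0$, but the underlying idea is identical.
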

\begin{proof}
    For all $v \in \R\p n$, $|XYv|\leq \|XY\|_2|v|\leq \|X\|_2\|Y\|_2|v|$ and $|XYv|\leq \|X\|_2 |Yv|\leq \|X\|_2\|Y\|_2|v|$. Thus $S_{XY} \supseteq S_X \cap YS_Y$ and, if $\|XY\|_2=\|X\|_2\|Y\|_2$, then $S_{XY} \subseteq S_X \cap YS_Y$. Lastly, $S_X = \im\widehat{V}_X$ and $Y S_Y = \im \widehat{U}_Y$.
\end{proof}

The optimal value in the next result is known in the multilayer case \cite{mulayoff2020unique}, a fortiori in the two layer case. On the other hand, the local-global property again appears to be new.

\begin{lemma}
    \label{lemma:spectral} 
    If $r\geq \rank(M)$, then $2\|M\|_2 = \min \{ \|X\|_2\p 2+\|Y\|_2\p 2 : XY = M \}$ and 
    a feasible point $(X,Y)$ is a global minimum iff it is a local minimum iff $\|XY\|_2 = \|X\|_2\|Y\|_2$ and $\|X\|_2=\|Y\|_2$ iff $\|X\|_2=\|Y\|_2 = \sqrt{\|M\|_2}$.
\end{lemma}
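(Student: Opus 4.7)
The plan has two parts: establishing the optimal value together with the characterization of global minima (elementary), and proving that any local minimum is a global minimum (the main step). For the lower bound, combining AM-GM with submultiplicativity yields $\|X\|_2^2+\|Y\|_2^2 \geq 2\|X\|_2\|Y\|_2 \geq 2\|XY\|_2 = 2\|M\|_2$. For the upper bound, a compact SVD $M = U\Sigma V^T$ of rank $p\leq r$ gives a feasible point by placing $U\Sigma^{1/2}$ in the first $p$ columns of $X$ and $\Sigma^{1/2}V^T$ in the first $p$ rows of $Y$ (zero elsewhere), for which $\|X\|_2 = \|Y\|_2 = \sqrt{\|M\|_2}$ and the bound is attained. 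Equality throughout the lower-bound chain is equivalent to $\|X\|_2=\|Y\|_2 =: s$ (AM-GM) and $\|XY\|_2 = s^2$ (submultiplicativity); combined with $\|XY\|_2 = \|M\|_2$, these force $s = \sqrt{\|M\|_2}$, establishing the equivalence among the condition $\|XY\|_2=\|X\|_2\|Y\|_2 \land \|X\|_2=\|Y\|_2$, the condition $\|X\|_2=\|Y\|_2=\sqrt{\|M\|_2}$, and global minimality.

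For the local-global step, I exploit the symmetry $(X,Y)\mapsto (XA, A^{-1}Y)$ for $A\in \GL(r,\R)$, which preserves $XY=M$. Given a local minimum $(X,Y)$ and any $B \in \R^{r\times r}$, the curve $A(t) = I + tB$ provides a feasible perturbation for small $|t|$, so $g(t) := \|XA(t)\|_2^2 + \|A(t)^{-1}Y\|_2^2$ has a local minimum at $t=0$. Applying Danskin's theorem to $\|X + tXB\|_2^2 = \max_{|v|=1} |Xv + tXBv|^2$ at $t=0$, whose argmax set is $S_X\cap S^{r-1}$, and using $X^TXv = \|X\|_2^2 v$ for $v\in S_X$, I obtain a right derivative equal to $2\|X\|_2^2 \alpha_{\max}(B)$, where $\alpha_{\max}(B) := \max\{v^T B v : v\in S_X, |v|=1\}$. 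Treating the $Y$ term similarly via $A(t)^{-1}Y = Y - tBY + O(t^2)$ and $YY^Tq = \|Y\|_2^2 q$ for $q \in YS_Y$, the right derivative is $-2\|Y\|_2^2 \beta_{\min}(B)$, where $\beta_{\min}(B) := \min\{q^T B q : q\in YS_Y, |q|=1\}$. The local minimum condition $g'_+(0)\geq 0$ thus reads
$$\|X\|_2^2\, \alpha_{\max}(B) \geq \|Y\|_2^2\, \beta_{\min}(B) \quad \text{for all } B \in \R^{r\times r}.$$

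I then test this necessary condition with two specific choices of $B$. Taking $B = \pm I$ gives $\alpha_{\max}(\pm I) = \beta_{\min}(\pm I) = \pm 1$, which forces $\|X\|_2 = \|Y\|_2 =: s$. Taking $B = -P_{S_X} + P_{YS_Y}$, where $P_{S_X}$ and $P_{YS_Y}$ are the orthogonal projections in $\R^r$ onto the respective singular subspaces, one has $\alpha_{\max}(B) \leq 0$ (since for unit $v \in S_X$, $v^TBv = -1 + |P_{YS_Y}v|^2 \leq 0$) and $\beta_{\min}(B) \geq 0$ (since for unit $q \in YS_Y$, $q^TBq = -|P_{S_X}q|^2 + 1 \geq 0$), so the inequality forces $\alpha_{\max}(B) = 0$, producing a unit $v \in S_X$ with $|P_{YS_Y}v| = 1$ and hence $v \in S_X \cap YS_Y$. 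By \cref{fact:product_spectral}, $\|XY\|_2 = \|X\|_2\|Y\|_2 = s^2$, and comparison with $\|XY\|_2 = \|M\|_2$ yields $s = \sqrt{\|M\|_2}$, which gives global minimality. The main obstacle is the rigorous Danskin-type computation of the one-sided derivative of the non-smooth function $t \mapsto \|X + tXB\|_2^2$ at $t=0$, together with verifying that the $O(t^2)$ term in $A(t)^{-1}Y$ does not affect first-order behavior.
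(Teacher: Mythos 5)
Your proof is correct, and although it shares the same underlying tool (the $\GL(r,\R)$ reparameterization symmetry $(X,Y)\mapsto(XA,A^{-1}Y)$), the route through the local-to-global step is genuinely different from the paper's.

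The bound $2\|M\|_2 \leq \|X\|_2^2+\|Y\|_2^2$ via AM--GM and submultiplicativity, and the feasible point achieving it, are identical to the paper. The difference lies in showing that a local minimum satisfies the balance conditions. The paper argues by \emph{explicit descent}: if $\|X\|_2\neq\|Y\|_2$ it scales $(tX,t^{-1}Y)$; if $\|XY\|_2<\|X\|_2\|Y\|_2$ it chooses $A_t = V_XD_tV_X^T$ with $D_t=\diag(tI_{d_X},I_{r-d_X})$ and shows directly (via \cref{fact:product_spectral}) that $\|X_t\|_2^2+\|Y_t\|_2^2$ strictly drops for $t<1$ close to $1$. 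You instead derive a \emph{first-order necessary condition} along the orbit: via Danskin's theorem the one-sided derivative of $g(t)=\|X(I+tB)\|_2^2+\|(I+tB)^{-1}Y\|_2^2$ at $0$ must be nonnegative for every $B$, yielding $\|X\|_2^2\,\alpha_{\max}(B)\geq\|Y\|_2^2\,\beta_{\min}(B)$, and then you pick the test directions $B=\pm I$ and $B=-P_{S_X}+P_{YS_Y}$ to extract $\|X\|_2=\|Y\|_2$ and $S_X\cap YS_Y\neq\{0\}$, concluding again via \cref{fact:product_spectral}. Both are valid; the paper's is more self-contained and elementary, while yours is more systematic (closer in spirit to \cref{thm:flat_0}/\cref{thm:flat_k}, which also use parametric optimality conditions along a curve in $G$) and arguably illuminates better \emph{why} the balance conditions arise as stationarity on the orbit. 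The one point you flag as an obstacle — justifying the Danskin step and that the $O(t^2)$ term is harmless — is indeed routine here, since $(I+tB)^{-1}$ is analytic near $t=0$ and the inner maps $\phi(\cdot,t)$ are smooth in $t$ uniformly over the compact sphere. Note also that your chain $\alpha_{\max}(B)\leq 0\leq\beta_{\min}(B)$ together with $\alpha_{\max}(B)\geq\beta_{\min}(B)$ forces both to vanish, and the max in $\alpha_{\max}$ is attained on a compact set, so the unit $v\in S_X$ with $|P_{YS_Y}v|=1$ exists; since $|v|=1$ this gives $P_{YS_Y}v=v$, hence $v\in S_X\cap YS_Y$ as needed.
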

\begin{proof}
    Given a singular value decomposition $M = U\Sigma V\p T$, one has 
    $$2\|M\|_2 = 2\|XY\|_2 \leq 2\|X\|_2\|Y\|_2 \leq \|X\|_2\p 2 + \|Y\|_2\p 2 $$
    with equality exactly when $\|XY\|_2 = \|X\|_2\|Y\|_2$ and $\|X\|_2=\|Y\|_2$. This happens in particular when $(X,Y)$ equals to $(U\Sigma\p{1/2},\Sigma\p {1/2} V\p T)$ up to some zero rows/columns. If $\|X\|_2\neq\|Y\|_2$, say $\|X\|_2<\|Y\|_2$, then $\|tX\|_2\p2+\|t\p{-1}Y\|_2\p2<\|X\|_2\p2+\|Y\|_2\p2$ for all $t\in (1,\|Y\|_2\p{1/2}\|X\|_2\p{-1/2}]$. Suppose $\|XY\|_2 < \|X\|_2\|Y\|_2$, then consider some singular value decompositions $X=U_X\Sigma_XV_X\p T$ and $Y = U_Y\Sigma_YV_Y\p T$ and let $d_X$ denote the multiplicity of the maximal singular value of $X$. Consider $D_t = \diag(tI_{d_X},I_{r-d_X})$ where $t>0$. Let $X_t = XV_XD_tV_X\p T$ and $Y_t = V_X D_t\p{-1} V_X\p T Y$. For all $t\in(0,1)$, 
    \begin{align*}
        \|X_t\|_2\p 2 + \|Y_t\|_2\p 2 & = \|XV_XD_tV_X\p T\|_2\p 2+\|V_X D_t\p{-1} V_X\p T Y\|_2\p 2 \\
        & = \|U_X\Sigma_X V_X\p T V_XD_tV_X\p T\|_2\p 2+\|V_X D_t\p{-1} V_X\p T U_Y\Sigma_Y V_Y\p T\|_2\p 2 \\
        & = \|\Sigma_X D_t\|_2\p 2+\|D_t\p{-1} V_X\p T U_Y\Sigma_Y\|_2\p 2 \\
        & = t\p 2\|\Sigma_X\|_2\p 2+\|D_t\p{-1} V_X\p T U_Y\Sigma_Y\|_2\p 2 \\
        & < t\p 2\|\Sigma_X\|_2\p 2+\|D_t\p{-1} V_X\p T\|_2\p 2 \|U_Y\Sigma_Y\|_2\p 2\\
        & = t\p 2\|X\|_2\p 2+t\p{-2} \|Y\|_2\p 2\\
        & < \|X\|_2\p 2+\|Y\|_2\p 2
    \end{align*}
    by \cref{fact:product_spectral}.
\end{proof}

\cref{lemma:nuclear}, \cref{lemma:balanced}, and \cref{lemma:spectral} immediately imply the following.

\begin{corollary}
\label{cor:inclusion}
    $\arg\min \{ \|X\|_F\p 2+\|Y\|_F\p 2 : XY = M \}\subseteq \arg\min \{ \|X\|_2\p 2+\|Y\|_2\p 2 : XY = M \}$.
\end{corollary}
The inclusion can be strict in \cref{cor:inclusion}, as we will see in \cref{eg:unbalanced}. In contrast to the multilayer case \cite{mulayoff2020unique}, it is possible to determine the maximal eigenvalue and eigenspace of the Hessian at any global minimum. In fact, shortly after submitting this work on arXiv, an expression for the maximal eigenvalue in the multilayer case was proposed using Kronecker products \cite[Theorem 5]{kamber2025sharpness}, implying the expression below \cite[Corollary 6]{kamber2025sharpness}.

\begin{proposition}
\label{prop:mf_hessian}
    Given $M \in \R\p{m\times n}$, let $f:\R\p{m\times r}\times \R\p{r\times n}\to \R$ be defined by $f(X,Y) = \|XY-M\|_F\p 2/2$. Suppose $XY=M$. Then $$\lambda_1(\nabla\p 2 f(X,Y)) = \|X\|_2\p 2+\|Y\|_2\p 2~~~\text{and}$$
    $$E_1(\nabla\p 2 f(X,Y)) = \left\{ 
    \left( U_X \begin{pmatrix}
        A & 0 \\
        0 & 0
    \end{pmatrix} U_Y\p T
    ,
    V_X \begin{pmatrix}
        B & 0 \\
        0 & 0
    \end{pmatrix} V_Y\p T \right) : 
    \|X\|_2A = \|Y\|_2B \in \R\p {d_X\times d_Y}
    \right\},$$
    given any singular value decompositions $X=U_X\Sigma_XV_X\p T$ and $Y=U_Y\Sigma_YV_Y\p T$, where $d_X$ (resp. $d_Y$) denotes the multiplicity of the maximal singular value of $X$ (resp. $Y$). Also, 
    $$\exists(H,K) \in E_1(\nabla\p 2 ((X,Y)) : ~ \im H\p T \cap \im K \neq \{0\} ~~\Longrightarrow~~ \|XY\|_2=\|X\|_2\|Y\|_2.$$
\end{proposition}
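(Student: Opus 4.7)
Since $XY = M$, the usual expansion $f(X+tH,Y+tK) = \tfrac{1}{2}\|t(HY+XK) + t\p2 HK\|_F\p2$ collapses the Hessian quadratic form to $\langle \nabla\p2 f(X,Y)(H,K),(H,K)\rangle = \|HY+XK\|_F\p2$. The plan is to reduce the resulting eigenvalue problem to an entrywise Rayleigh quotient by diagonalization, then read the eigenspace straight out of the equality case in Cauchy-Schwarz.

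Given SVDs $X = U_X\Sigma_X V_X\p T$ and $Y = U_Y\Sigma_Y V_Y\p T$, the orthogonal changes of variable $\tilde H = U_X\p T H U_Y$ and $\tilde K = V_X\p T K V_Y$ preserve Frobenius norms, and $HY+XK = U_X(\tilde H\Sigma_Y + \Sigma_X\tilde K)V_Y\p T$. Because $\Sigma_X$ and $\Sigma_Y$ are (padded) diagonal, the matrix $\tilde H\Sigma_Y + \Sigma_X\tilde K$ decouples entrywise: its $(i,j)$ entry is $\sigma_j\p Y \tilde H_{ij} + \sigma_i\p X \tilde K_{ij}$, where $\tilde H_{ij}$ is absent for $j>r$ and $\tilde K_{ij}$ is absent for $i>r$. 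The Rayleigh quotient $\|HY+XK\|_F\p2 / (\|H\|_F\p2+\|K\|_F\p2)$ therefore splits as a maximum over independent $(i,j)$-cells, each of the form $\max(a\xi+b\eta)\p2/(\xi\p2+\eta\p2) = a\p2+b\p2$ with $a=\sigma_j\p Y$, $b=\sigma_i\p X$. The overall maximum $(\sigma_i\p X)\p2+(\sigma_j\p Y)\p2$ is attained precisely when $i\leq d_X$ and $j\leq d_Y$, giving the eigenvalue $\|X\|_2\p2+\|Y\|_2\p2$.

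For the eigenspace, any maximizer must have $\tilde H_{ij}=\tilde K_{ij}=0$ off $[1,d_X]\times[1,d_Y]$, and on this block the Cauchy-Schwarz equality case forces $(\tilde H_{ij},\tilde K_{ij}) \propto (\sigma_j\p Y,\sigma_i\p X) = (\|Y\|_2,\|X\|_2)$. Setting $A=(\tilde H_{ij})_{i\leq d_X,j\leq d_Y}$ and $B=(\tilde K_{ij})_{i\leq d_X,j\leq d_Y}$, this proportionality is exactly $\|X\|_2 A = \|Y\|_2 B$. Undoing the orthogonal changes of variable restores the stated block form $H = U_X\left(\begin{smallmatrix}A&0\\0&0\end{smallmatrix}\right)U_Y\p T$ and $K = V_X\left(\begin{smallmatrix}B&0\\0&0\end{smallmatrix}\right)V_Y\p T$.

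For the final implication, the block form makes the image structure transparent: $\im H\p T$ is contained in the span of the first $d_Y$ columns of $U_Y$, which is $\im\widehat U_Y$, and $\im K$ is contained in the span of the first $d_X$ columns of $V_X$, which is $\im\widehat V_X$. A nonzero common vector thus gives $\im\widehat V_X\cap\im\widehat U_Y\neq\{0\}$, and \cref{fact:product_spectral} yields $\|XY\|_2 = \|X\|_2\|Y\|_2$. The only real difficulty is bookkeeping: keeping track of the distinct dimensions $m$, $r$, $n$ in the zero padding of $\Sigma_X,\Sigma_Y$ so that the entrywise decoupling and the index ranges for $\tilde H,\tilde K$ line up correctly; once this is set up, every step reduces to a one-variable or two-variable Cauchy-Schwarz.
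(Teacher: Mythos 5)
Your proof is correct, and it takes a genuinely different route from the paper for the core eigenvalue/eigenspace computation. The paper first establishes the upper bound at the block level — chaining the triangle inequality, the submultiplicativity $\|HY\|_F\le\|H\|_F\|Y\|_2$, and a Cauchy--Schwarz on the scalar pair $(\|H\|_F,\|K\|_F)$ — and only then passes to the diagonalized coordinates to track equality through all three inequalities for the eigenspace. You skip the block-level argument entirely: after the orthogonal change of variables $\tilde H=U_X^THU_Y$, $\tilde K=V_X^TKV_Y$, you observe that the quadratic form $\|\tilde H\Sigma_Y+\Sigma_X\tilde K\|_F^2$ is block-diagonal in the coordinate pairs $(\tilde H_{ij},\tilde K_{ij})$, so one two-variable Cauchy--Schwarz per cell gives both the eigenvalue $\max_{i,j}((\sigma_i^X)^2+(\sigma_j^Y)^2)$ and the extremal directions at once. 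This is a cleaner, single-tool argument. What it costs you is the bookkeeping you flag at the end: with $X\in\R^{m\times r}$, $Y\in\R^{r\times n}$, the cells fall into three types depending on whether $\tilde H_{ij}$, $\tilde K_{ij}$, or both are present, and you need $\|X\|_2,\|Y\|_2>0$ (automatic here since $XY=M\neq 0$; the proposition is degenerate if $M=0$) to rule out the single-variable cells from achieving the maximum. With that settled, your per-cell proportionality $(\tilde H_{ij},\tilde K_{ij})\propto(\|Y\|_2,\|X\|_2)$ on the active block $[1,d_X]\times[1,d_Y]$ is exactly $\|X\|_2A=\|Y\|_2B$, and your image argument for the final implication matches the paper's (which, incidentally, has a small typo, writing $\widehat V_Y B$ where $\widehat V_X B$ is meant).
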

\begin{proof}
    Observe that $f(X,Y)=\|F(X,Y)\|_F\p 2/2$ where $F(X,Y) = XY-M$. By \cref{fact:chain_smooth}, $\lambda_1(\nabla f(X,Y)) = \|F'(X,Y)\|_2\p 2$ so we compute $F'(X,Y)(H,K) = HY+XK$. 
    One has  
    \begin{align*}
        \|HY+XK\|_F &\leq \|HY\|_F+\|XK\|_F\leq \|H\|_F\|Y\|_2+\|X\|_2\|K\|_F \\
        & \leq \sqrt{\|H\|_F\p 2+\|K\|_F\p 2}\sqrt{\|X\|_2\p 2 + \|Y\|_2\p 2}
    \end{align*}
    with equality when $H = U_X E\p{mr}_{11} U_Y\p T$ and $K = V_X E\p {rn}_{11} V_Y\p T$ (where $E\p{pq}_{11} \in \R\p{p\times q}$ with only one nonzero entry, $(1,1)$, equal to 1). To determine exactly when equality holds, first reduce to the diagonal case:
        \begin{align*}
        \|F'(X,Y)\|_2 & = \max_{\|H\|_F\p 2+\|K\|_F\p 2\leq 1} \|HY+XK\|_F
        = \max_{\|H\|_F\p 2+\|K\|_F\p 2\leq 1} \|HU_Y\Sigma_YV_Y\p T+U_X\Sigma_XV_X\p TK\|_F \\
        & = \max_{\|H\|_F\p 2+\|K\|_F\p 2\leq 1} \|U_X\p T HU_Y\Sigma_Y+\Sigma_XV_X\p TKV_Y\|_F
        = \max_{\|H\|_F\p 2+\|K\|_F\p 2\leq 1} \|H\Sigma_Y+\Sigma_XK\|_F.
    \end{align*}
    Equality in the Cauchy-Schwarz inequality holds iff $(\|H\|_F,\|K\|_F)$ and $(\|X\|_2,\|Y\|_2)$ are positively colinear.
    Let $h_j$ denote the columns of $H$, and $k_i\p T$ the rows of $K$. One has $\|H\Sigma_Y\|_F=\|H\|_F\|\Sigma_Y\|_2$ iff $\sum_j(\Sigma_Y)_{jj}\p 2|h_j|\p 2=\sum_j(\Sigma_Y)_{11}\p 2|h_j|\p 2$ iff $\sum_j[(\Sigma_Y)_{jj}\p 2-(\Sigma_Y)_{11}\p 2]|h_j|\p 2 = 0$ iff $[(\Sigma_Y)_{jj}\p 2-(\Sigma_Y)_{11}\p 2]|h_j|\p 2 = 0$ iff $h_j = 0$ for all $j\notin \llbracket 1,d_Y\rrbracket$. Likewise, $\|\Sigma_X K\|_F=\|\Sigma_X\|_2 \|K\|_F$ iff $k_i= 0$ for all $i\notin \llbracket 1,d_X\rrbracket$. Next, $\|H\Sigma_Y+\Sigma_X K\|_F=\|H\Sigma_Y\|_F+\|\Sigma_X K\|_F$ iff $H\Sigma_Y$ and $\Sigma_X K$ are positively colinear. Hence all equalities hold iff
    $\|X\|_2H=\|Y\|_2K$ and $H_{ij} = K_{ij} = 0$ for all $(i,j)\notin \llbracket 1,d_X\rrbracket\times \llbracket 1,d_Y\rrbracket$.

    Let $\widehat V_{X}$ (resp. $\widehat U_{Y}$) denote the first $d_{X}$ (resp. $d_{Y}$) columns of $V_{X}$ (resp. $U_{Y}$). If $(H,K) \in E_1(\nabla\p 2 ((X,Y))$ and $\im H\p T \cap \im K\neq \{0\}$, then $\im \widehat U_Y A\p T \cap \im \widehat V_Y B \neq \{0\}$, $\im \widehat U_Y \cap \im \widehat V_Y \neq \{0\}$, and $\|XY\|_2=\|X\|_2\|Y\|_2$ by \cref{fact:product_spectral}. 
\end{proof}

It is now possible to completely characterize flat minima in matrix factorization. Recall that $\im AA\p T = \im A$ for any matrix $A\in\R\p {m\times n}$.

\begin{proposition}
    \label{prop:mf_2_flat}
    Given $M \in \R\p{m\times n}$, let $f:\R\p{m\times r}\times \R\p{r\times n}\to \R$ be defined by $f(X,Y) = \|XY-M\|_F\p 2/2$. Suppose $\overline{X}\hspace{.4mm}\overline{Y}=M$. The following are equivalent:
    \begin{enumerate}[label=\rm{(\roman{*})}]
        \item \label{item:global_flat} $(\overline{X},\overline{Y})$ is globally flat;
        \item \label{item:flat} $(\overline{X},\overline{Y})$ is flat;
        \item \label{item:local} $(\overline{X},\overline{Y}) \in \arg\loc\min \{ \lambda_1(\nabla\p 2 f(X,Y)) : XY = M\}$;
        \item \label{item:balanced} $\|\overline{X}\|_2 = \|\overline{Y}\|_2 ~\land~ \exists(\overline{H},\overline{K}) \in E_1(\nabla\p 2 f(\overline{X},\overline{Y}))\setminus\{0\} : ~ \overline{H}\p T \overline{H} = \overline{K}\hspace{.4mm}\overline{K}\p T$;
        \item \label{item:global} $(\overline{X},\overline{Y}) \in \arg\min \{ \lambda_1(\nabla\p 2 f(X,Y)) : XY = M\}$;
        \item \label{item:min} $\|\overline{X}\|_2\p 2+ \|\overline{Y}\|_2\p 2 = 2\|M\|_2$;
        \item \label{item:simple} $\|\overline{X}\|_2=\|\overline{Y}\|_2 = \sqrt{\|M\|_2}$.
    \end{enumerate}
\end{proposition}
\begin{proof}
\ref{item:global_flat} $\Longrightarrow$ \ref{item:flat} follows by \cref{def:flat}.

\ref{item:flat} $\Longrightarrow$ \ref{item:local} is due to \cref{cor:smooth}. 

\ref{item:local} $\Longrightarrow$ \ref{item:balanced} If $\|\overline X\|_2\neq\|\overline Y\|_2$, say $\|\overline X\|_2<\|\overline Y\|_2$, then $\|t\overline X\|_2\p2+\|t\p{-1}\overline Y\|_2\p2<\|\overline X\|_2\p2+\|\overline Y\|_2\p2$ for all $t\in (1,\|\overline Y\|_2\p{1/2}\|\overline X\|_2\p{-1/2}]$. If $(\overline X,\overline Y)=(0,0)$, then take $\overline H\p T$ and $\overline K$ to be the rectangular identities. Otherwise, $\nabla\p 2 f(\overline{X},\overline{Y})\neq 0$ so \cref{thm:flat_k} applies. 

\ref{item:balanced} $\Longrightarrow$ \ref{item:global} 
By \cref{lemma:spectral}, $\|\overline X\|_2=\|\overline Y\|_2$. Since $\overline{H}\p T \overline{H} = \overline{K}\hspace{.4mm}\overline{K}\p T\neq 0$, $\im \overline{H}\p T = \im\overline{H}\p T \overline{H} = \im \overline{K}\hspace{.4mm}\overline{K}\p T = \im \overline{K}\neq\{0\}$ and so $\|\overline X\hspace{.4mm}\overline Y\|_2 = \|\overline X\|_2\|\overline Y\|_2$ by \cref{prop:mf_hessian}.
    
\ref{item:global} $\Longrightarrow$ \ref{item:global_flat}
For any $XY=M$ such that $\|X\|_2 = \|Y\|_2$, we have
    \begin{align*}
        \cf(X,Y,r) & = \max_{\|H\|_F\p 2+\|K\|_F\p 2\leq r\p 2} \|HY+XK+HK\|_F\p 2/2 \\
        & = \max_{\|H\|_F\p 2+\|K\|_F\p 2\leq r\p 2} \|HU_Y\Sigma_YV_Y\p T+U_X\Sigma_XV_X\p TK+HK\|_F\p 2 /2 \\
        & = \max_{\|H\|_F\p 2+\|K\|_F\p 2\leq r\p 2} \|U_X\p T HU_Y\Sigma_Y+\Sigma_XV_X\p TKV_Y + U_X\p T H K V_Y\|_F\p 2 /2 \\
        & = \max_{\|H\|_F\p 2+\|K\|_F\p 2\leq r\p 2} \|H\Sigma_Y+\Sigma_XK+HU_Y\p T V_X K\|_F\p 2 /2 \\
        & = \max_{\|H\|_F\p 2+\|K\|_F\p 2\leq r\p 2} (\|H\|_F\|Y\|_2+\|X\|_2\|K\|_F+\|H\|_F \|K\|_F)\p 2 /2 \\
        & = (\sqrt{2}\|X\|_2r+r\p 2/2)\p 2 /2 \\
        & = (2\|X\|_2\p 2 r\p 2 + \sqrt{2}\|X\|_2 r\p 3 + r\p 4/4)/2 \\
        & = (\|X\|_2\p 2+\|Y\|_2\p 2)r\p 2/2 + \sqrt{2} (\|X\|_2+\|Y\|_2) r\p 3/4 + r\p 4/8
    \end{align*}
    using the same direction to obtain the equalities as in first part of the proof of \cref{prop:mf_hessian}. Let $XY=M$. Since $f$ is $D\p 2$ and $\nabla f(X,Y) = 0$, by \cref{cor:smooth}, $\cf(X,Y,r) = \lambda_1(\nabla\p 2 f(X,Y))r\p 2/2 + o(r\p 2)$. Hence, if $\lambda_1(\nabla\p 2 f(\overline X,\overline Y))<\lambda_1(\nabla\p 2 f(X,Y))$, then there exists $\overline{r}>0$ such that $\cf(\overline X,\overline Y,r)<\cf(X,Y,r)$ for all $r\in(0,\overline{r}]$. Otherwise, by \cref{lemma:spectral}, $\|X\|_2=\|Y\|_2=\sqrt{\|M\|_2}=\|\overline X\|_2=\|\overline Y\|_2$ and so $\cf(\overline X,\overline Y,r) = \cf(X,Y,r)$ for all $r\geq 0$. In either case, $(\overline X , \overline Y) \preceq (X,Y)$, hence $(\overline X , \overline Y)$ is globally flat.

    \ref{item:global} $\Longleftrightarrow$ \ref{item:min} $\Longleftrightarrow$ \ref{item:simple} This was already shown in \cref{lemma:spectral}.
\end{proof}

One immediately obtains the following corollary, as announced in the introduction.

\begin{corollary}
    \label{cor:balanced_flat}
    Given $M \in \R\p{m\times n}$, let $f:\R\p{m\times r}\times \R\p{r\times n}\to \R$ be defined by $f(X,Y) = \|XY-M\|_F\p 2/2$. If $XY=M$ and $X\p T X=Y Y\p T$, then $(X,Y)$ is flat.
\end{corollary}
\begin{proof}
    This follows from \cref{lemma:nuclear}, \cref{cor:inclusion}, \cref{prop:mf_hessian}, and \cref{prop:mf_2_flat}.
\end{proof}

While the converse of \cref{cor:balanced_flat} is false, as noted in  \cite{mulayoff2020unique}, \cref{prop:mf_2_flat} implies that when $(X,Y)$ is a flat global minimum, there must exist a maximal eigenvector $(H,K)$ of the Hessian such that $H\p T H = K K\p T$. This appears to be new. We give a counterexample to the converse of \cref{cor:balanced_flat} using a well-known fact.


\begin{fact}
\label{fact:exact}
    Given $M\in\R\p{m\times n}$, consider a compact singular value decomposition $M=U\Sigma V\p T$ and let $r = \rank M$. For all $(X,Y)\in\R\p{m\times r}\times \R\p{r\times n}$,  
    $$XY=M ~~~\Longleftrightarrow~~~ \exists A\in \GL(r): (X,Y)=(U\Sigma\p{1/2}A,A\p{-1}\Sigma\p{1/2}V\p T).$$
\end{fact}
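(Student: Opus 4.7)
The plan is to prove both implications of the equivalence, with the backward direction being essentially a one-line verification and the forward direction requiring a brief rank argument to pin down the column and row spaces of $X$ and $Y$.

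For the backward direction, I would simply compute $(U\Sigma^{1/2}A)(A^{-1}\Sigma^{1/2}V^T) = U\Sigma^{1/2} A A^{-1}\Sigma^{1/2} V^T = U\Sigma V^T = M$, using that $\Sigma$ is a positive diagonal $r\times r$ matrix (since $\Sigma$ comes from a \emph{compact} SVD, so $\Sigma^{1/2}$ is well defined and invertible).

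For the forward direction, suppose $XY = M$. Since $r = \rank M = \rank(XY) \leq \min\{\rank X, \rank Y\} \leq r$ (the outer bound coming from $X\in\R^{m\times r}$ and $Y\in\R^{r\times n}$), both $X$ and $Y$ have rank $r$, i.e., $X$ has full column rank and $Y$ has full row rank. Next, $\im X \supseteq \im(XY) = \im M = \im U$, and both $\im X$ and $\im U$ are $r$-dimensional subspaces of $\R^m$, so $\im X = \im U$. Since $U$ has orthonormal columns, every vector in $\im U$ is uniquely of the form $Uw$, so there exists a unique $B\in\R^{r\times r}$ with $X = UB$; this $B$ must be invertible because $X$ has full column rank. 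Symmetrically, there exists $C\in\GL(r,\R)$ with $Y = CV^T$.

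Plugging these into $XY = U\Sigma V^T$ and multiplying on the left by $U^T$ and on the right by $V$ (which give identities $U^TU = V^TV = I_r$) yields $BC = \Sigma$. Setting $A = \Sigma^{-1/2} B \in \GL(r,\R)$, we get $U\Sigma^{1/2}A = UB = X$, and $A^{-1}\Sigma^{1/2}V^T = B^{-1}\Sigma^{1/2}\Sigma^{1/2}V^T = B^{-1}(BC)V^T = CV^T = Y$, as desired. There is no real obstacle here: the only mildly delicate point is remembering that $\Sigma$ is square and invertible because the decomposition is \emph{compact}, which is what allows $\Sigma^{1/2}$ and $\Sigma^{-1/2}$ to be used freely.
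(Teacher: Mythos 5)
Your proof is correct, and it is in fact more careful than the paper's. The paper argues by a terse chain of biconditionals: $XY=M$ iff $\Sigma^{-1/2}U^TXYV\Sigma^{-1/2}=I_r$ iff $\Sigma^{-1/2}U^TX=A$ and $YV\Sigma^{-1/2}=A^{-1}$ for some $A\in\GL(r,\R)$ iff $(X,Y)=(U\Sigma^{1/2}A,A^{-1}\Sigma^{1/2}V^T)$. As written, the step that converts $\Sigma^{-1/2}U^TX=A$ back into $X=U\Sigma^{1/2}A$ is not automatic: multiplying $U^TX=\Sigma^{1/2}A$ by $U$ only gives $UU^TX=U\Sigma^{1/2}A$, and $UU^T\neq I_m$ for a compact SVD, so one still needs $\im X\subseteq \im U$. (Equivalently, the first biconditional fails in the backward direction without a rank restriction, e.g. $m=n=2$, $r=1$, $U=V=(1,0)^T$, $\Sigma=(1)$, $X=(1,1)^T$, $Y=(1,0)$ gives $U^TXYV=\Sigma$ but $XY\neq M$.) Your proof supplies exactly the missing ingredient: the rank count $r=\rank(XY)\le\min\{\rank X,\rank Y\}\le r$ forces $X$ and $Y$ to have full rank, which together with $\im X\supseteq\im(XY)=\im M=\im U$ and a dimension count gives $\im X=\im U$, hence $X=UB$ with $B\in\GL(r,\R)$, and symmetrically $Y=CV^T$; from $BC=\Sigma$ one sets $A=\Sigma^{-1/2}B$. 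So you and the paper reach the same parametrization, but you make explicit the column/row-space identification that the paper's chain of equivalences relies on tacitly.
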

\begin{proof}
    First consider the  diagonal case $$M = \begin{pmatrix}
        \Sigma & 0 \\
        0 & 0
    \end{pmatrix}, ~~~  
    X = \begin{pmatrix}
        X_1 \\
        X_2
    \end{pmatrix}, ~~~\text{and} ~~~
    Y = \begin{pmatrix}
        Y_1 & Y_2
    \end{pmatrix}.
    $$
    Then $XY = M$ iff $X_1Y_1 = \Sigma$, $X_1Y_2 = 0$, $X_2Y_1 = 0$, and $X_2Y_2 =0$. But $X_1Y_1 = \Sigma$ iff $\Sigma \p{-1/2}X_1Y_1\Sigma \p{-1/2} = I_r$ iff there is $A \in \GL(r)$ such that $\Sigma \p{-1/2}X_1 = A$ and $Y_1\Sigma \p{-1/2} = A\p{-1}$ iff $X_1 = \Sigma \p{1/2} A$ and $Y_1 = A\p{-1}\Sigma \p{1/2}$. Now $X_1Y_2 = 0$ implies $Y_2 = 0$ and $X_2Y_1 = 0$ implies $X_2 = 0$. Thus $XY = M$ iff $X_1 = \Sigma \p{1/2} A$ and $Y_1 = A\p{-1}\Sigma \p{1/2}$ and $(X_2,Y_2) = (0,0)$.

    Now consider the general case
    $$M = \begin{pmatrix}
        U & \widetilde U
    \end{pmatrix}
    \begin{pmatrix}
        \Sigma & 0 \\
        0 & 0
    \end{pmatrix}
    \begin{pmatrix}
        V & \widetilde V
    \end{pmatrix}\p T.
    $$
    Then $XY = M$ iff  
        $$\begin{pmatrix}
        U & \widetilde U
    \end{pmatrix}\p T XY \begin{pmatrix}
        V & \widetilde V
    \end{pmatrix} =
    \begin{pmatrix}
        \Sigma & 0 \\
        0 & 0
    \end{pmatrix}
    $$
    iff
    \begin{equation*}
    X = \begin{pmatrix}
        U & \widetilde U
    \end{pmatrix} 
    \begin{pmatrix}
        \Sigma\p {1/2} A \\ 0
    \end{pmatrix} = U \Sigma\p {1/2} A
    ~~~\text{and}~~~ 
    Y =
    \begin{pmatrix}
        A\p{-1} \Sigma\p {1/2} & 0
    \end{pmatrix}
    \begin{pmatrix}
        V & \widetilde V
    \end{pmatrix}\p T = A\p{-1} \Sigma\p {1/2} V\p T. \qedhere
    \end{equation*}
\end{proof}

\begin{example} 
\label{eg:unbalanced}
Given $a>b>0$, the flat minima of $\R\p{2\times 2}\times \R\p{2\times 2}\ni(X,Y)\mapsto \|XY-M\|_F\p 2\in \R$ with $M=\diag(a,b)$ are of the form
    $$(X,Y) = \left(\begin{pmatrix}
        \sqrt{a} & 0 \\
        0 & \sqrt{b}t
    \end{pmatrix}Q, Q\p T\begin{pmatrix}
        \sqrt{a} & 0 \\
        0 & \sqrt{b}/t
    \end{pmatrix}\right)$$
    where $\sqrt{a/b}\leq t \leq \sqrt{a/b}$ and $Q \in \O(2)$. They are globally but not strictly flat, and satisfy $X\p TX-YY\p T = b\hspace{.4mm}\diag(0,t\p 2 - 1/t\p 2)$.
\end{example}
\begin{proof}
    By \cref{prop:mf_2_flat} and \cref{fact:exact}, $(X,Y)$ is a flat minimum iff $\|X\|_2=\|Y\|_2=\sqrt{\|M\|_2}$ and $(X,Y)=(M\p{1/2}A,A\p{-1}M\p{1/2})$ for some $A\in \GL(2)$. Consider a decomposition $A=DNQ$ where $D=\diag(\alpha,\beta)$ with $\alpha,\beta>0$, $N$ is upper triangular of order 2 with ones on the diagonal with $N_{12}=\gamma\in\R$, and $Q \in\O(2)$. We have $\|X\|_2=\|M\p{1/2}A\|_2 =\|M\p{1/2}DNQ\|_2 =\|M\p{1/2}DN\|_2$ and $\|Y\|_2 = \|AM\p{1/2}\|_2 = \|Q\p{-1}N\p{-1}D\p{-1}M\p{1/2}\|_2 = \|N\p{-1}D\p{-1}M\p{1/2}\|_2$. Observe that $\|M\p{1/2}DN\|_2 = \|N\p{-1} D\p{-1} M\p{1/2}\|_2 = \|M\p{1/2}\|_2=\sqrt{a}$ iff $\alpha = 1$, $\sqrt{b/a}\leq \beta\leq \sqrt{a/b}$, and $N=I_2$. Indeed,
        $$ M\p {1/2} D N =
    \begin{pmatrix}
        \sqrt{a} & 0 \\
        0 & \sqrt{b}
    \end{pmatrix}
    \begin{pmatrix}
        \alpha & 0 \\
        0 & \beta
    \end{pmatrix}
    \begin{pmatrix}
        1 & \gamma \\
        0 & 1
    \end{pmatrix}
    =
    \begin{pmatrix}
        \sqrt{a}\alpha & \sqrt{a}\alpha\gamma \\
        0 & \sqrt{b}\beta
    \end{pmatrix} ~~~ \text{and}
    $$
    $$ N\p{-1} D\p{-1} M\p{1/2} =
    \begin{pmatrix}
        1 & -\gamma \\
        0 & 1
    \end{pmatrix}
    \begin{pmatrix}
        1/\alpha & 0 \\
        0 & 1/\beta
    \end{pmatrix}
    \begin{pmatrix}
        \sqrt{a} & 0 \\
        0 &\sqrt{b}
    \end{pmatrix}
    =
    \begin{pmatrix}
        \sqrt{a}/\alpha & -\gamma\sqrt{b}/\beta \\
        0 & \sqrt{b}/\beta
    \end{pmatrix}.
    $$
    Bear in mind that for any scalars $u$ and $v$, one has
    \begin{equation*}
    \left\|\begin{pmatrix}
        1 & u \\
        0 & v
    \end{pmatrix}\right\|_2 = \max_{x\p 2 + y\p 2 = 1} \sqrt{(x+uy)\p 2 + v\p 2 y\p 2} \geq \max_{x\p 2 + y\p 2 = 1} |x+uy| = \sqrt{1+u\p 2}.\qedhere
    \end{equation*}
\end{proof}

\cref{cor:balanced_flat} means that one can compute a flat global minimum easily using linear algebra: just take a compact singular value decomposition $M=U\Sigma V\p T$ and let $(X,Y)$ equal to $(U\Sigma\p {1/2},\Sigma\p {1/2} V\p T)$ up to some zero rows/columns. Below, we consider two complementary dynamical systems converging to balanced solutions, which are therefore flat. The first concerns gradient trajectories of the objective function.

\begin{proposition}
    \label{prop:f_flow}
    Given $M \in \R\p{m\times n}$ and $r\geq \rank (M)$, let $f:\R\p{m\times r}\times \R\p{r\times n}\to \R$ be defined by $f(X,Y) = \|XY-M\|_F\p 2/2$. There exist differentiable functions $(X,Y):\R\to\R\p{m\times r}\times \R\p{r\times n}$ such that
    $$(\dot{X},\dot{Y}) = -\nabla f(X,Y)$$ and $$ (0,0)\xleftarrow[-\infty \leftarrow t]{}(X(t),Y(t))\xrightarrow[t\to\infty]{} (X_\infty,Y_\infty)$$ where $(X_\infty,Y_\infty)$ is a balanced global minimum. 
\end{proposition}
\begin{proof}
    Without loss of generality, we may restrict ourselves to the diagonal case
    $$M = \begin{pmatrix}
        \Sigma & 0 \\
        0 & 0
    \end{pmatrix}.$$
    Let $(X_0,Y_0) = (\diag(\Sigma,0),\diag(\Sigma,0))/2 \in \R\p{m\times r}\times \R\p{r\times n}$
    and consider the initial value problem
    $$\left\{
    \begin{array}{ccc}
        (\dot{X},\dot{Y}) & = & -\nabla f(X,Y), \\
        (X(0),Y(0)) & = & (X_0,Y_0).
    \end{array}
    \right.$$
    It yields $\rank(M)$ decoupled systems
    $$\forall i\in\llbracket 1,\rank(M)\rrbracket,~~~ \left\{
    \begin{array}{ccc}
        (\dot{X}_{ii},\dot{Y}_{ii}) & = & -\nabla \varphi_i(X_{ii},Y_{ii}), \\
        (X_{ii}(0),Y_{ii}(0)) & = & (\Sigma_{ii}\p{1/2},\Sigma_{ii}\p{1/2})/2,
    \end{array}
    \right.$$
    where $\varphi_i(x,y) = (xy-\Sigma_{ii})\p 2/2$ and all other entries are fixed to zero throughout time. This reduces the problem to the scalar case, i.e., $f(x,y) = (xy-1)\p 2$, and to its dynamics on the diagonal $x=y$, i.e., on the function $g(x) = (x\p 2 -1)\p 2$. We deduce that 
    $$ (0,0)\xleftarrow[-\infty \leftarrow t]{}(X_{ii}(t),Y_{ii}(t))\xrightarrow[t\to\infty]{} (\Sigma_{ii}\p{1/2},\Sigma_{ii}\p{1/2})$$
    and conclude by \cref{cor:balanced_flat}.
\end{proof}

The second concerns gradient trajectories of the conserved quantity $\|C\|_F\p 2$. We recall a simple fact.

\begin{fact}
    \label{fact:spectral}
    Let $\phi:\R\p {m\times n} \to \R$ be defined by $\phi(A) = \|A\|_2$. Given a singular value decomposition $A=U\Sigma V\p T$ where $u_i$ and $v_i$ resp. denote the columns of $U$ and $V$, $\partial \phi(A) = \co\{ u_iv_i\p T : i\in\llbracket 1,d_A\rrbracket\}$. 
\end{fact}
\begin{proof}
    We have $\partial \phi(A) = U\partial \phi(\Sigma)V\p T$ and
    $\phi(\Sigma+tH) = \sup \{|(\Sigma+tH)w|:w\in S\p{n-1}\} = \|\Sigma\|_2+t\max\{|H_{ii}|:i\in\llbracket 1, d_A\rrbracket\} = \phi(\Sigma)+t\sigma_{\{E_{ii}\}_{i\in \llbracket 1, d_A\rrbracket}}(H)$ where $E_{ij}$ denotes the canonical basis of $\R \p{m\times n}$. By the max formula \cite[Theorem 23.4]{rockafellar1970convex}, $\phi'(\Sigma,H) = \sigma_{\partial \phi(\Sigma)}(H) = \sigma_{\{E_{ii}\}_{i\in \llbracket 1, d_A\rrbracket}}(H)$ and so $\partial\phi(\Sigma)= \co \{E_{ii}:i\in \llbracket 1, d_A\rrbracket\}$.
\end{proof}

\begin{proposition}
\label{prop:c_flow}
    Given $M \in \R\p{m\times n}$, let $f,c,\varphi:\R\p{m\times r}\times \R\p{r\times n}\to \R$ be defined by 
    \begin{align*}
    f(X,Y) & = \|XY-M\|_F\p 2/2, \\
    c(X,Y) & = \|X\p T X - YY\p T\|_F\p 2/4, \\
    \varphi(X,Y) & = \|X\|_2\p 2-2\|XY\|_2+\|Y\|_2\p 2.
    \end{align*}
    The following hold: 
    \begin{enumerate}[label=\rm{(\roman{*})}]
        \item $f+c$ is coercive; \label{item:coercive}
        \item For any $XY=M$,~ $\nabla c(X,Y) = 0 ~\Longleftrightarrow~ X\p T X = Y Y \p T$; \label{item:gradient}
        \item For any $(X,Y)$,~ $\varphi'(X,Y;-\nabla c(X,Y)) \leq -\varphi(X,Y)\p 2$;
        \label{item:directional}
        \item For any $X_0Y_0= M$, 
    $$
    \left\{
    \begin{array}{ccc}
        (\dot{X},\dot{Y}) & = & -\nabla c(X,Y), \\
        (X(0),Y(0)) & = & (X_0,Y_0),
    \end{array}
    \right.
    $$
    has a unique flattening global solution converging to a balanced global minimum of $f$ such that 
    $$\frac{d}{dt}\lambda_1(\nabla\p 2f(X,Y))\leq -(\lambda_1(\nabla\p 2f(X,Y))-2\|M\|_2)\p 2,$$
    $$\forall t>0, ~~~ \lambda_1(\nabla\p 2f(X(t),Y(t))) \leq 2\|M\|_2 + 1/t.$$
    \label{item:ode}
    \end{enumerate} 
\end{proposition}
\begin{proof}
\ref{item:coercive} Following the derivation in \cite[Example 1]{josz2023global},
    \begin{align*}
        \|X\|_2\p 4 + \|Y\|_2\p 4 & \leq \|X\p T X\|_F\p 2 + \|Y Y\p T\|_F\p 2 \\
        & = \|X\p T X -Y Y\p T\|_F\p 2 + 2\langle X\p TX, YY\p T\rangle \\ 
        & = \|X\p T X -Y Y\p T\|_F\p 2 + 2\|XY\|_F\p 2 \\
        & \leq \|X\p T X -Y Y\p T\|_F\p 2 + 2(\|XY-M\|_F+\|M\|_F)\p 2 \\
        & = 2c(X,Y)+2\left(\sqrt{2f(X,Y)}+\|M\|_F\right)\p 2 \\
        & \leq 2(f+c)(X,Y)+2\left(\sqrt{2(f+c)(X,Y)}+\|M\|_F\right)\p 2.
    \end{align*}
    \ref{item:gradient} For any point $(X,Y)$, we have
    \begin{equation}
    \label{eq:connection_c}
    \nabla c(X,Y) = \begin{pmatrix}
         X(X\p T X - Y Y\p T)  \\
         -(X\p T X - Y Y\p T)Y 
    \end{pmatrix}. 
    \end{equation}
    If $XY=M$ and $\nabla c(X,Y) = 0$, then $XX\p T X = XY Y\p T= M Y\p T$ and $Y Y\p TY = X\p T XY = X\p T M$. Thus $X\p T XX\p T X = X\p T M Y\p T = Y\p T Y Y\p TY$ and $X\p T X = Y Y\p T$ by \cref{fact:commute}.

    \ref{item:directional} By \cref{fact:spectral},
    $\partial \varphi(X,Y)$ is the convex hull of the points of the form $2(s_x u_x v_x\p T,s_y u_y v_y\p T)$
    where $s_x = \|X\|_2$ and $s_y = \|Y\|_2$ and $u_x,v_x,u_y,v_y$ are some singular vectors of $X$ and $Y$ resp. The max formula \cite[Theorem 23.4]{rockafellar1970convex} states that $\varphi'(X,Y;D) = \max \{ \langle H, D\rangle : H\in \partial \varphi(X,Y)\}$. We thus compute
    \begin{align*}
        & ~ \langle (s_x u_x v_x\p T,s_y u_y v_y\p T) , \nabla c(X,Y)\rangle \\
        = & ~ \langle s_x u_x v_x\p T , X(X\p T X - Y Y\p T)\rangle - \langle (X\p T X - Y Y\p T)Y,s_y u_y v_y\p T\rangle \\
        = & ~ s_x u_x\p T X(X\p T X - Y Y\p T)v_x - s_y u_y\p T (X\p T X - Y Y\p T)Y v_y \\
        = & ~ s_x\p 2 v_x\p T(X\p T X - Y Y\p T)v_x - s_y\p 2 u_y\p T (X\p T X - Y Y\p T)u_y \\
        = & ~ s_x\p 4 - s_x\p 2 |Y\p Tv_x|\p 2 - s_y\p 2 |Xu_y|\p 2 +s_y\p 4 \\
        = & ~ s_x\p 4 - |Y\p TX\p T u_x|\p 2 - |XYv_y|\p 2 +s_y\p 4 \\
        \geq & ~ s_x\p 4 - 2s_{xy}\p 2 +s_y\p 4 \\
        \geq & ~ (s_x\p 2 - 2s_{xy} +s_y\p 2)\p 2/2
    \end{align*}
    where the last inequality is due to $s_{xy} =\|XY\|_2\leq \|X\|_2\|Y\|_2= s_xs_y$. Indeed, 
    \begin{align*}
        2s_x\p 4 - 4s_{xy}\p 2 +2s_y\p 4 - (s_x\p 2 - 2s_{xy} +s_y\p 2)\p 2 & = s_x\p 4 + s_y\p 4 - 8s_{xy}\p 2 +4 s_x\p 2 s_{xy} + 4s_{xy} s_y\p 2 - 2s_x\p 2 s_y\p 2 \\
        & = (s_x\p 2 - s_y\p 2)\p 2 + 4s_{xy}(s_x\p 2 + s_y\p 2-2s_{xy}) \\
        & \geq (s_x\p 2 - s_y\p 2)\p 2 + 4s_{xy}(s_x\p 2 + s_y\p 2-2s_{x}s_{y}) \\
        & = (s_x\p 2 - s_y\p 2)\p 2 + 4s_{xy}(s_x- s_y)\p 2.
    \end{align*}
    
    \ref{item:ode} Since $XY$ and $f$ are conserved along trajectories of $-\nabla c$ by \cref{prop:conserved_mutual}, $f+c$ decreases along them. By \ref{item:coercive}, they are bounded and hence globally defined according to \cite[Proposition 2]{josz2023global}. They must converge to balanced points by virtue of \ref{item:gradient}. Let $\mu = \varphi(X,Y) = \lambda_1(\nabla\p 2f(X,Y))-2\|M\|_2$, where the second equality is due to \cref{prop:mf_hessian}. Since $\varphi$ agrees with a convex function along any fixed trajectory, by \cite[Proposition 2]{bolte2020conservative} the chain rule holds almost surely:
    $$\dot{\mu} = \sup_{H\in \partial \varphi(X,Y)} \langle H ,(\dot{X},\dot{Y})\rangle = \sup_{H\in \partial \varphi(X,Y)} \langle H ,-\nabla c(X,Y)\rangle = \varphi'(X,Y;H) \leq - \mu\p 2$$
    where the last inequality is due to \ref{item:directional}.
    Integrating $-\dot{\mu}\mu\p {-2} \geq 1$ yields $1/\mu(t)\geq 1/\mu(t)-1/\mu(0)\geq t$ if $\mu(0)\neq 0$, otherwise $\mu(t) = 0$ for all $t\geq 0$.
\end{proof}

\cref{prop:c_flow} complements the analysis of Riemannian gradient dynamics in \cite[Theorem 5.1]{helmke1994optimization}. Given $X_0Y_0=M$ with $\rank(X_0) = \rank(Y_0)=r$, it considers 
$$\left\{
\begin{array}{ccc}
    (\dot{X},\dot{Y}) & = & - \mathrm{grad}\hspace{.3mm}\Phi, \\
    (X(0),Y(0)) & = & (X_0,Y_0),
\end{array}
\right. $$
where $\Phi: \GL(r)(X_0,Y_0)\to\R$ is defined by $\Phi(X,Y) = \|X\|_F\p 2 + \|Y\|_F\p 2$, and the orbit is equipped with the normal Riemannian metric \cite[Equation (5.3) p. 186]{helmke1994optimization}. \cite[Theorem 5.1 (a)]{helmke1994optimization} shows that $\mathrm{grad}\hspace{.3mm}\Phi = (X(X\p T X - Y Y\p T),-(X\p T X - Y Y\p T)Y )$, but no connection is made with the conserved quantity. According to \eqref{eq:connection_c}, we in fact have $\mathrm{grad}\hspace{.3mm}\Phi = \nabla c$, which considerably simplifies the analysis and does not require the rank condition.
If that condition holds, then each trajectory of $-\nabla c$ remains in the orbit of the initial point.

\section{Examples}
\label{sec:Examples}

Some examples are in order. 

\begin{example}
    \label{eg:nn}
    A point $x$ is a flat minimum of $f(x) = (x_2 \mathrm{ReLU}(x_1)+x_3 - 1)\p 2$ iff $(x_1 < 0 \land x_3 = 1)\lor(x_1= 0 \land |x_2| \leq 1 \land x_3= 1)$.
\end{example}
\begin{proof}
     Observe that $f$ fails to be regular at $(0,x_2,x_3)$ when $x_2(x_3-1)<0$, so the calculus rule in \cref{cor:regular} does not apply. Regardless, by \cref{rem:jacobian}, $\lip f(x) = 0$ whenever $f(x) = 0$, so it wouldn't be of much use. We simply resort to the definition. If $(x_1 < 0 \land x_3 = 1)\lor(x_1= 0 \land |x_2| \leq 1 \land x_3= 1)$, then $\cf(x,r) = r\p 2$ for $r$ near $0$. If $x_1= 0 \land |x_2| > 1 \land x_3= 1$, then $\cf(x,r) = x_2\p 2 r\p 2>r\p 2$ near $0$. Otherwise, $x_1>0$ and $f(x) = (x_2x_1+x_3-1)\p 2$, in which case
     $$\nabla f (x) = 2(x_2x_1+x_3-1)\begin{pmatrix}
         x_2 \\ x_1 \\ 1
     \end{pmatrix} ~~~ \text{and}~~~ \nabla\p 2 f (x) = 2 \begin{pmatrix}
         x_2\p 2 & 2x_2x_1 + x_3-1 & x_2 \\
         2x_2x_1 + x_3-1 & x_1\p 2 & x_1 \\
         x_2 & x_1 & 1
     \end{pmatrix} $$
     If $f(x) = 0$, then
     $$\nabla\p 2 f (x) = 2 \begin{pmatrix}
         x_2\p 2 & x_2x_1 & x_2 \\
         x_2x_1 & x_1\p 2 & x_1 \\
         x_2 & x_1 & 1
     \end{pmatrix} = \begin{pmatrix}
         x_2 \\ x_1 \\ 1
     \end{pmatrix} \begin{pmatrix}
         x_2 \\ x_1 \\ 1
     \end{pmatrix} \p T$$
     and so $\lambda_1(\nabla \p 2 f(x)) = x_2\p2 + x_1\p 2 +1$. 
     By \cref{prop:smooth}, $\cf(x,r) = (x_2\p 2+x_1\p 2 + 1)r\p 2/2 + o(r\p 2) >r\p 2$.
\end{proof}

\begin{example}
    \label{eg:mf4}
    The flat minima of $f(x) = (x_1x_2-1)^4$ are $\pm(1,1)$.
\end{example} 
\begin{proof}
By \cref{fact:chain_smooth}, $\|f\p{(4)}(x)\| = 4|F'(x)|_2\p 4$ when $F(x) = x_1x_2-1 = 0$, i.e., $\|f\p{(4)}(x)\| = 4(x_1\p 2+x_2\p 2)\p 2$. This quantity is strictly minimized at $\pm(1,1)$ over the solution set, so one concludes by \cref{cor:smooth}.
Alternatively, one notices that the flat minima are the same as that of $|x_1x_2-1|$ by using \cref{def:flat} and the fact that $t\in \R_+ \mapsto t\p 4$ is strictly increasing. Since the new function is regular and locally Lipschitz, by \cref{cor:regular} it suffices to compute its Lipschitz modulus when $x_1x_2=1$, i.e., $x_1\p 2 +x_2\p 2$.
\end{proof}

\begin{example}
\label{eg:4th}
    The origin is the sole flat minimum of $f(x) = x_2\p 2 +x_1\p2x_2\p 4$.
\end{example}
\begin{proof}
Compute 
$$F'(x) = 2\begin{pmatrix}
    x_1  x_2\p 4 \\
    x_2 +2x_1\p2x_2\p 3
\end{pmatrix} ~~~\text{and}~~~ \nabla\p 2 f(x) = 2 \begin{pmatrix}
    x_2\p 4 & 4x_1 x_2\p 3 \\
    4x_1 x_2\p 3 & 1 + 6 x_1\p 2 x_2\p 2
\end{pmatrix}.$$
Thus $\lambda_1(\nabla\p 2 f(x_1,0)) = 2$, as claimed in the introduction. By virtue of Examples \ref{eg:flat_not_imply_strict} and \ref{eg:strict_not_imply_flat}, it would seem futile to compute higher-order derivatives. But in this special case, the maximal eigenvectors of $f\p{(2)}(x_1,0)$ and $f\p{(4)}(x_1,0)$ align (with $(0,1)$) while $f\p{(3)}(x_1,0)=0$. Since $f\p{(4)}(x_1,0) = 24x_1\p 2$, we deduce that $\cf(x_1,0,r) = r\p 2 + x_1\p 2r\p4 +o(r\p 4)$ and conclude by \cref{def:flat}.

\end{proof}


\begin{example}
    \label{eg:orthgonal}
    A global minimum of $f(x) = (a_1x_1\p 2+\cdots+a_nx_n\p 2-1)\p 2$ where $a \in \R\p n$ is flat iff $a_ix_i = 0$ for all $i\notin I =\arg\min \{ a_i : a_i > 0\}$.
\end{example}
\begin{proof}
    Without loss of generality, assume $a_i\neq 0$ for all $i\in\llbracket 1,n\rrbracket$. If $I=\emptyset$, then $f(x) = (|a_1|x_1\p 2+\cdots+|a_n|x_n\p 2+1)\p 2$ and every global minimum is flat. Otherwise, let $m$ be the cardinal of $I$. The objective $f$ is invariant under the natural action of $G = \mathrm{diag} (O(m) , I_{n-m})$ once we reorder the indices so that those in $I$ come first. By \cref{fact:chain_smooth}, $\lambda_1(\nabla\p 2 f(x)) = 2|F'(x)|_2\p 2$ when $F(x) = a_1x_1^2+\dots+a_n x_n^2-1=0$. Since $F'(x) = 2(a_1x_1,\dots,a_nx_n)$, we seek to minimize $|F'(x)|\p 2=4(a_1^2 x_1^2+\dots+a_n^2 x_n^2)$ subject to $a_1 x_1^2+\dots+a_n x_n^2=1$. Consider the Lagrangian $$L(x,\lambda)= \sum_{i=1}\p n a_i\p 2 x_i^2 + \lambda\left(1-\sum_{i=1}\p n a_ix_i\p 2\right).$$ Since the constraint is qualified, at optimality $a_i\p 2 x_i - \lambda a_ix_i=0$, namely, $a_ix_i(a_i-\lambda)=0$. If $\lambda \neq a_i$ for all $i$, then $0=a_1 x_1^2+\dots+a_n x_n^2=1$, a contradiction. If $\lambda = a_i<0$ for some $i$, then $0>a_1 x_1^2+\dots+a_n x_n^2=1$, a contradiction. If $\lambda = a_i>0$ for some $i$, then $a_1^2 x_1^2+\dots+a_n^2 x_n^2 = \lambda$ and so $i\in I$. Thus, for any global minimum $\overline{x}$ of $f$ such that $\overline{x}_i = 0$ for all $i\notin I$, $G\overline{x}$ is a strict global minimum of $\lambda_1(\nabla\p 2 f(x))+\delta_{[f=f(\overline{x})]}$. It follows that every point in $G\overline{x}$ is flat by \cref{cor:orthogonal_smooth}. No other global minimum is flat by \cref{cor:smooth}.
\end{proof}

\begin{example}
    \label{eg:mf1_ab}
    The flat global minima of $f(x) = |x_1x_3-a|+|x_2x_3-b|$ are 
    $$x = \pm \left( a\sqrt{ \frac{\sqrt{2}}{|a|+|b|} }, b\sqrt{ \frac{\sqrt{2}}{|a|+|b|} } ,\sqrt{ \frac{|a|+|b|}{\sqrt{2}} }\right)$$
    if $(a,b)\neq (0,0)$, else $(0,0,0)$.
\end{example}

    \begin{proof} By \cref{fact:chain_reg}, $f$ is regular,
    \begin{equation*}
        \partial f(x) = \left\{ \begin{pmatrix}
            \lambda_1 x_3 \\
            \lambda_2 x_3 \\
            \lambda_1 x_1 + \lambda_2 x_2
        \end{pmatrix}
        ~,\hspace{3mm} \lambda \in \begin{pmatrix}
        \mathrm{sign}(x_1x_3-a) \\ \mathrm{sign}(x_2x_3-b)
        \end{pmatrix}
        \right\},
    \end{equation*}
    and $\lip f(x) = \sqrt{2|x_3|\p 2 + (|x_1|+|x_2|)\p 2}$. Thus $(0,0,0)$ is the sole flat global minimum when $(a,b)=(0,0)$ by \cref{cor:regular}.
    When $(a,b)\neq (0,0)$, $\mathrm{argmin} f = \{(at,bt,1/t) : t\neq 0\}$. Accordingly, given $t\neq 0$, let $x_t = (at,bt,1/t)$ and compute
    \begin{align*}
        \lip f(x_t)^2 & = \max \{ |v|^2 : v \in \partial f(x_t) \} \\
        & = \max \{ (\lambda_1/t)^2 + (\lambda_2/t)^2 + (\lambda_1 at + \lambda_2 bt)^2 : \lambda_1,\lambda_2 \in [-1,1] \} \\
        & = \max \{ (\lambda_1^2+\lambda_2^2)/t^2 + (\lambda_1 a + \lambda_2 b)^2t^2 : \lambda_1,\lambda_2 \in [-1,1] \} \\
        & = 2/t^2 + (|a| + |b|)^2t^2.
    \end{align*}
    When $ab\neq 0$, the last max is reached exactly at $\pm(\mathrm{sign}(a),\mathrm{sign}(b))$, whence
    \begin{equation*}
        \mathrm{argmax} \{ |v| : v \in \partial f(x_t) \} = \left\{ 
        \pm\begin{pmatrix}
            \mathrm{sign}(a)/t \\
            \mathrm{sign}(b)/t \\
            (|a| + |b|)t
        \end{pmatrix}
        \right\}.
    \end{equation*}
    Since 
    \begin{equation*}
        \frac{d\lip f(x_t)^2}{dt} = -\frac{4}{t^3} + 2(|a|+|b|)^2t ~~~\text{and}~~~ \frac{d^2\lip f(x_t)^2}{dt^2} = \frac{12}{t^4} + 2(|a|+|b|)^2 > 0,
    \end{equation*}
    the Lipschitz modulus is strictly minimized when
    \begin{equation*}
        |t| = \sqrt{\frac{\sqrt{2}}{|a|+|b|}}.
    \end{equation*} 
    This yields the expression of the flat global minima by \cref{cor:regular}. 
    
    We now verify the claim of \cref{prop:mf_1}. The objective $f$ is invariant under the natural action of the Lie group $G = \{ \mathrm{diag}(t,t,1/t) : t\neq 0 \}$ whose Lie algebra is $ \fg =\mathrm{span} \{\mathrm{diag}(1,1,-1) \}$. By \cref{prop:conserved}, a conserved quantity is given by $C(x) =  x_1^2+x_2^2 - x_3^2$. When $ab\neq 0$,
    the quantity
    \begin{equation*}
        C\left(\pm\begin{pmatrix}
            \mathrm{sign}(a)/t \\
            \mathrm{sign}(b)/t \\
            (|a| + |b|)t
        \end{pmatrix}\right) = (\mathrm{sign}(a)/t)^2 + (\mathrm{sign}(b)/t)^2 - [(|a| + |b|)t]^2 = 2/t^2 - (|a| + |b|)^2t^2
    \end{equation*}
    indeed cancels out iff $x_t$ is flat. (The same holds with $ab=0$, but we omit this case for brevity.) On the other hand,
    \begin{equation*}
        C(x_t) = (a^2+b^2)t^2 - 1/t^2 \neq 0
    \end{equation*}
    unless $|a| = |b|$. Indeed, $C(x_t) = 0$ for a flat minimum if and only if
    \begin{equation*}
       \sqrt{\frac{\sqrt{2}}{|a|+|b|}} = \frac{1}{\sqrt[4]{a^2+b^2}} ~ \Longleftrightarrow ~ 2(a^2+b^2) = (|a|+|b|)^2 ~ \Longleftrightarrow ~ a^2+b^2 = 2|ab| ~ \Longleftrightarrow ~ |a| = |b|.
    \end{equation*} 
    \end{proof}

\begin{example}
    \label{eg:monomial}
    The flat global minima of $f(x) = (x^\upsilon - 1)\p 2$ where $x^\upsilon = x_1^{\upsilon_1}\cdots x_n^{\upsilon_n}, \upsilon \in {\mathbb{N}^*}^n,$ are
    $$|x_i| = \frac{\sqrt{\upsilon_i}}{\sqrt{\upsilon_1^{\upsilon_1}\cdots\upsilon_n^{\upsilon_n}}^{1/|\upsilon|_1}}, ~~~ i = 1, \hdots, n,$$
    for any choice of signs such that $x^\upsilon=1$.
    Any solution to
    $$
    \left\{
    \begin{array}{ccl}
        \dot{x}_i & = & - \upsilon_n x_i (\upsilon_n x_i^2 - \upsilon_i x_n^2), ~ i = 1,\hdots,n-1, \\
    ~ \dot{x}_n & = & \sum_{i=1}\p {n-1} \upsilon_i x_n (\upsilon_n x_i^2 - \upsilon_i x_n^2),
    \end{array}
    \right.
    $$
    initialized at a global minimum is globally defined, flattens over time, and converges to a flat global minimum.
\end{example}
\begin{proof}
    Let $\overline{\upsilon} = (\upsilon_1,\hdots,\upsilon_{n-1})$.
    The objective is invariant under the natural action of the Lie group
    $$ G = \{ \mathrm{diag}(t_1^{\upsilon_n},\hdots,t_{n-1}^{\upsilon_n},t^{-\overline{\upsilon}}) : t_1,\hdots,t_{n-1} > 0  \}
    $$
    whose Lie algebra is $$ \fg =\mathrm{span} \{\mathrm{diag}(\upsilon_n,0,\hdots,0,-\upsilon_1) ,\hdots, \mathrm{diag}(0,\hdots,0, \upsilon_n,-\upsilon_{n-1}) \}.$$ By Proposition \ref{prop:conserved}, a conserved quantity is given by
    $$C(x) = (\upsilon_nx_1^2 - \upsilon_1x_n^2,\hdots,\upsilon_n x_{n-1}^2 - \upsilon_{n-1}x_n^2).$$ 
    
    The connected components of the global minima of $f$ are homogeneous $G$-spaces. Indeed, given $\overline{x}$ a global minimum of $f$, we have
    $$G \overline{x} = \{ x \in \mathbb{R}^n : x^\upsilon = 1, ~ \mathrm{sign}(x) = \mathrm{sign}(\overline{x})\}.$$
    Let $x$ be such that $x^\upsilon = 1$ and $\mathrm{sign}(x) = \mathrm{sign}(\overline{x})$. Then $x = \mathrm{diag}(t_1^{\upsilon_n},\hdots,t_{n-1}^{\upsilon_n},t^{-\overline{\upsilon}}) \overline{x}$ with $t_i = (x_i/\overline{x}_i)^{1/\upsilon_n}$ since $t_i^{\upsilon_n}\overline{x}_i = x_i$ for all $i$ and 
    $t^{-\overline{\upsilon}} \overline{x}_n = t^{-\overline{\upsilon}} \overline{x}_1^{-\upsilon_1/\upsilon_n} \cdots \overline{x}_{n-1}^{-\upsilon_{n-1}/\upsilon_n} 
        = (t_1^{\upsilon_n}\overline{x}_1)^{-\upsilon_1/\upsilon_n} \cdots (t_{n-1}^{\upsilon_n}\overline{x}_{n-1})^{-\upsilon_{n-1}/\upsilon_n}
        = x_1^{-\upsilon_1/\upsilon_n} \cdots x_{n-1}^{-\upsilon_{n-1}\upsilon_n} = x_n$.
        
    It is now possible to determine flat points. Let $F(x) = x^\upsilon - 1$ and compute
    $$F'(x) = (\upsilon_1 x^{\upsilon}/x_1,\hdots,\upsilon_n x^{\upsilon}/x_n).$$
    By \cref{fact:chain_smooth}, $\lambda_1(\nabla\p 2 f(x)) = 2|F'(x)|\p 2$ when $F(x)=0$. Given $t_1,\hdots,t_{n-1} > 0 $, let $x_t = \mathrm{diag}(t_1^{\upsilon_n},\hdots,t_{n-1}^{\upsilon_n},t^{-\overline{\upsilon}}) \overline{x}$ where $\overline{x}$ is global minimum such that $|\overline{x}_i|=1$ for all $i$. We have
    $$|F'(x_t)|\p 2 = \upsilon_1^2 t_1^{-2\upsilon_n} + \cdots + \upsilon_{n-1}^2 t_{n-1}^{-2\upsilon_n} + \upsilon_n^2 t^{2\overline{\upsilon}}$$ and $$\frac{\partial |F'(x_t)|}{\partial t_i} =
        2\upsilon_i\upsilon_n(-\upsilon_i t_i^{-2\upsilon_n} + \upsilon_n t^{2\overline{\upsilon}})/t_i.$$
Thus
$$\nabla |F'(x_t)|^2 = 0 \Longleftrightarrow \upsilon_1 t_1^{-2\upsilon_n} = \cdots = \upsilon_{n-1} t_{n-1}^{-2\upsilon_n} = \upsilon_n t^{2\overline{\upsilon}}$$
and
$$ \frac{\partial^2 |F'(x_t)|\p 2}{\partial t_i \partial t_j} =
    \left\{
    \begin{array}{cl}
        2\upsilon_i\upsilon_n((2\upsilon_n+1)\upsilon_i t_i^{-2\upsilon_n} +(2\upsilon_i-1) \upsilon_n t^{2\overline{\upsilon}})/t_i^2 & \text{if}~i=j, \\
        4\upsilon_i\upsilon_j\upsilon_n^2 t^{2\overline{\upsilon}}/(t_it_j) & \text{else}.
    \end{array}
    \right.$$
Observe that
$$ \nabla |F'(x_t)|^2 = 0 ~~~\Longrightarrow ~~~  \nabla^2 |F'(x_t)|^2 = 4\upsilon_n^2 t^{2\overline{\upsilon}} \left(\mathrm{diag}(\upsilon \oslash t)^2 + (\upsilon \oslash t)(\upsilon \oslash t)^T\right) \succ 0 $$
where $\oslash$ is the Hadamard division and $\succ$ means positive definite here. Stationary implies
$ \upsilon_1^{\upsilon_1} \cdots \upsilon_{n-1}^{\upsilon_{n-1}} t^{-2\upsilon_n\overline{\upsilon}} = (\upsilon_n t^{2\overline{\upsilon}})^{\upsilon_1+\cdots+\upsilon_{n-1}}$
so that $\upsilon_1^{\upsilon_1} \cdots \upsilon_n^{\upsilon_n} = (\upsilon_n t^{2\overline{\upsilon}})^{|v|_1}$. Thus $t^{-\overline{\upsilon}} = \upsilon_n\p{1/2}(\upsilon_1^{\upsilon_1}\cdots\upsilon_n^{\upsilon_n})^{-1/(2|\upsilon|_1)}$. Similarly $(\upsilon_i t_i^{-2\upsilon_n})^{|\upsilon|_1} = \upsilon_1^{\upsilon_1} \cdots \upsilon_n^{\upsilon_n}$ and $t_i^{\upsilon_i} = \upsilon_i\p{1/2}(\upsilon_1^{\upsilon_1}\cdots\upsilon_n^{\upsilon_n})^{-1/(2|\upsilon|_1)}$. Positive definiteness implies strict local optimality. One now concludes by \cref{cor:smooth}. Note that $$\lambda_1(\nabla\p 2 f(x_t)) = 2(\upsilon_1^2 t_1^{-2\upsilon_n} + \cdots + \upsilon_{n-1}^2 t_{n-1}^{-2\upsilon_n} + \upsilon_n^2 t^{2\overline{\upsilon}})$$ is positive and coercive.

Observe that a global minimum $x$ is flat iff $C(x)=0$. The `only if' part follows from the formula we just found, so it remains to check the `if' part. If $C_i(x) = \upsilon_n x_i^2 - \upsilon_i x_n^2 = 0$, then $|x_i| = \sqrt{\upsilon_i/\upsilon_n}|x_n|$,
    $$|x|^\upsilon = \prod_{i=1}^n \sqrt{\upsilon_i/\upsilon_n}^{\upsilon_i} |x_n|^{\upsilon_i} = 1,~~~\text{and}~~~
    |x_n|^{|\upsilon|_1} = \prod_{i=1}^n \sqrt{\upsilon_n/\upsilon_i}^{\upsilon_i} = \frac{\sqrt{\upsilon_n}^{|\upsilon|_1} }{\prod_{i=1}^n \sqrt{\upsilon_i^{\upsilon_i}}}.$$ Accordingly, let $c(x) = \|C(x)\|_F\p 2/4$. Notice that a global minimum $x$ is flat iff $\nabla c(x) = 0$. For all $x \in \R\p n$, we have
    \begin{align*}
        C_i(\nabla f(x)) & = 4(x^\upsilon-1)^2 [\upsilon_n(\upsilon_i x^{\upsilon}/x_i)^2 - \upsilon_i(\upsilon_n x^{\upsilon}/x_n)^2] \\
        & = 4 (x^\upsilon-1)^2\upsilon_i\upsilon_n(x^\upsilon/(x_ix_n))^2[\upsilon_ix_n^2-\upsilon_nx_i^2] \\
        & = -4\upsilon_i\upsilon_n((x^\upsilon-1)(x^\upsilon/(x_ix_n))^2 C_i(x).
    \end{align*}
Let $\overline{x}\in[f=0]$ be such that $C(\overline{x})\neq 0$. There is an index $i_0$ such that $C_{i_0}(\overline{x})\neq 0$. For all $x\in\R\p n$ near $\overline{x}$, we have
\begin{align*}
    \langle C(x),C(\nabla f(x))\rangle & = \sum_{i=1}\p {n-1} 4C_i(x)[-\upsilon_i\upsilon_n((x^\upsilon-1)/(x_ix_n))\p2 C_i(x)] \\
    & = -4(x^\upsilon-1)\p 2\frac{\upsilon_n x^{2\upsilon}}{x_n\p2} \sum_{i=1}\p {n-1} \frac{\upsilon_i C_i(x)\p2}{x_i\p2} 
    \leq -4(x^\upsilon-1)\p 2\frac{\upsilon_n x^{2\upsilon}}{x_n\p2}  \frac{\upsilon_{i_0}C_{i_0}(\overline{x})\p 2}{x_{i_0}\p2} \\
    & \leq -4\omega (x^\upsilon-1)\p 2 x^{2\upsilon} \sum_{i=0}\p n \frac{\upsilon_i\p 2}{x_i\p 2} = -\omega|\nabla f(x)|\p 2
\end{align*}
for some constant $\omega>0$. Together with \cref{prop:conserved}, this means that \cref{assum:conserved} holds. Applying \cref{cor:flattening}, $\lambda_1(\nabla\p 2 f)$ strictly decreases along the trajectories of $-\nabla c$. Since $\lambda_1(\nabla\p 2 f)+\delta_{[f=0]}$ is coercive, they must be bounded and hence globally defined by \cite[Proposition 2]{josz2023global}. Since $c$ is semi-algebraic, they must also converge to a point $\overline{x}$ where $\nabla c(\overline{x})=0$ \cite{lojasiewicz1984trajectoires}, i.e., a flat global minimum.
\end{proof}

To avoid overburdening the examples, we have not mentioned that all the flat minima are, in fact, globally flat.

\section*{Acknowledgments}
I would like to thank Wenqing Ouyang for his careful reading of the manuscript and valuable feedback. I also wish to thank Xiaopeng Li for useful criticism.

\bibliographystyle{abbrv}    
\bibliography{references}

\begin{thebibliography}{10}

\bibitem{ahn2024escape}
K.~Ahn, A.~Jadbabaie, and S.~Sra.
\newblock How to escape sharp minima with random perturbations.
\newblock {\em ICML}, 2024.

\bibitem{andriushchenko2023modern}
M.~Andriushchenko, F.~Croce, M.~M{\"u}ller, M.~Hein, and N.~Flammarion.
\newblock A modern look at the relationship between sharpness and generalization.
\newblock {\em ICML}, 2023.

\bibitem{arora2018optimization}
S.~Arora, N.~Cohen, and E.~Hazan.
\newblock On the optimization of deep networks: Implicit acceleration by overparameterization.
\newblock In {\em ICML}, pages 244--253. PMLR, 2018.

\bibitem{aubin1984differential}
J.-P. Aubin and A.~Cellina.
\newblock {\em Differential inclusions: set-valued maps and viability theory}, volume 264.
\newblock Springer-Verlag, 1984.

\bibitem{baldi1989neural}
P.~Baldi and K.~Hornik.
\newblock Neural networks and principal component analysis: Learning from examples without local minima.
\newblock {\em Neural networks}, 2(1):53--58, 1989.

\bibitem{bauschke2017convex}
H.~H. Bauschke and P.~L. Combettes.
\newblock {\em Convex analysis and monotone operator theory in Hilbert spaces}.
\newblock Springer, 2017.

\bibitem{bochnak2013real}
J.~Bochnak, M.~Coste, and M.-F. Roy.
\newblock {\em Real algebraic geometry}, volume~36.
\newblock Springer Science \& Business Media, 2013.

\bibitem{bochnak1971polynomials}
J.~Bochnak and J.~Siciak.
\newblock Polynomials and multilinear mappings in topological vector-spaces.
\newblock {\em Studia Mathematica}, 39(1):59--76, 1971.

\bibitem{bolte2007clarke}
J.~Bolte, A.~Daniilidis, A.~Lewis, and M.~Shiota.
\newblock Clarke subgradients of stratifiable functions.
\newblock {\em SIAM Journal on Optimization}, 18(2):556--572, 2007.

\bibitem{bolte2020conservative}
J.~Bolte and E.~Pauwels.
\newblock Conservative set valued fields, automatic differentiation, stochastic gradient methods and deep learning.
\newblock {\em Mathematical Programming}, pages 1--33, 2020.

\bibitem{cartan1971differential}
H.~Cartan.
\newblock Differential calculus on normed spaces.
\newblock {\em Paris, Houghton Mifflin Co., Boston, MA}, 1971.

\bibitem{clarke1990}
F.~H. Clarke.
\newblock {\em Optimization and nonsmooth analysis}.
\newblock SIAM Classics in Applied Mathematics, 1990.

\bibitem{ding2014introduction}
C.~Ding, D.~Sun, and K.-C. Toh.
\newblock An introduction to a class of matrix cone programming.
\newblock {\em Mathematical Programming}, 144(1):141--179, 2014.

\bibitem{ding2024flat}
L.~Ding, D.~Drusvyatskiy, M.~Fazel, and Z.~Harchaoui.
\newblock Flat minima generalize for low-rank matrix recovery.
\newblock {\em Information and Inference: A Journal of the IMA}, 13(2):iaae009, 2024.

\bibitem{dinh2017sharp}
L.~Dinh, R.~Pascanu, S.~Bengio, and Y.~Bengio.
\newblock Sharp minima can generalize for deep nets.
\newblock In {\em International Conference on Machine Learning}, pages 1019--1028. PMLR, 2017.

\bibitem{drusvyatskiy2015curves}
D.~Drusvyatskiy, A.~D. Ioffe, and A.~S. Lewis.
\newblock Curves of descent.
\newblock {\em SIAM Journal on Control and Optimization}, 53(1):114--138, 2015.

\bibitem{du2018algorithmic}
S.~S. Du, W.~Hu, and J.~D. Lee.
\newblock Algorithmic regularization in learning deep homogeneous models: Layers are automatically balanced.
\newblock {\em Advances in neural information processing systems}, 31, 2018.

\bibitem{gatmiry2023inductive}
K.~Gatmiry, Z.~Li, C.-Y. Chuang, S.~Reddi, T.~Ma, and S.~Jegelka.
\newblock The inductive bias of flatness regularization for deep matrix factorization.
\newblock {\em Neurips}, 2023.

\bibitem{gilbert2021fragments}
J.~C. Gilbert.
\newblock Fragments d'optimisation diff{\'e}rentiable -- th{\'e}ories et algorithmes.
\newblock 2021.

\bibitem{harzheim2005ordered}
E.~Harzheim.
\newblock {\em Ordered sets}.
\newblock Springer, 2005.

\bibitem{helmke1994optimization}
U.~Helmke, R.~Brockett, and J.~Moore.
\newblock {\em Optimization and Dynamical Systems}.
\newblock Communications and Control Engineering. Springer London, 1994.

\bibitem{hochreiter1994simplifying}
S.~Hochreiter and J.~Schmidhuber.
\newblock Simplifying neural nets by discovering flat minima.
\newblock {\em Advances in neural information processing systems}, 7, 1994.

\bibitem{hochreiter1997flat}
S.~Hochreiter and J.~Schmidhuber.
\newblock Flat minima.
\newblock {\em Neural computation}, 9(1):1--42, 1997.

\bibitem{josz2023global}
C.~Josz.
\newblock Global convergence of the gradient method for functions definable in o-minimal structures.
\newblock {\em Mathematical Programming}, pages 1--29, 2023.

\bibitem{josz2025lyapunov}
C.~Josz.
\newblock Lyapunov stability of the euler method.
\newblock {\em arXiv}, 2025.

\bibitem{josz2025subdifferentiation}
C.~Josz.
\newblock Subdfifferentiation with symmetry.
\newblock {\em submitted}, 2025.

\bibitem{josz2026implicit}
C.~Josz.
\newblock Implicit regularization of normalized gradient descent.
\newblock {\em arXiv}, 2026.

\bibitem{josz2023certifying}
C.~Josz and X.~Li.
\newblock Certifying the absence of spurious local minima at infinity.
\newblock {\em SIAM Journal on Optimization}, 33(3):1416--1439, 2023.

\bibitem{joszneurips2018}
C.~Josz, Y.~Ouyang, R.~Y. Zhang, J.~Lavaei, and S.~Sojoudi.
\newblock {A theory on the absence of spurious solutions for nonconvex and nonsmooth optimization}.
\newblock {\em NeurIPS}, Dec. 2018.

\bibitem{kamber2025sharpness}
A.~Kamber and R.~Parhi.
\newblock Sharpness of minima in deep matrix factorization: Exact expressions.
\newblock {\em arXiv preprint arXiv:2509.25783}, 2025.

\bibitem{keskar2017large}
N.~S. Keskar, D.~Mudigere, J.~Nocedal, M.~Smelyanskiy, and P.~T.~P. Tang.
\newblock On large-batch training for deep learning: Generalization gap and sharp minima.
\newblock {\em ICLR}, 2017.

\bibitem{lee2012smooth}
J.~M. Lee.
\newblock {\em Introduction to Smooth Manifolds}.
\newblock Springer, 2012.

\bibitem{lim2005singular}
L.-H. Lim.
\newblock Singular values and eigenvalues of tensors: a variational approach.
\newblock In {\em 1st IEEE International Workshop on Computational Advances in Multi-Sensor Adaptive Processing, 2005.}, pages 129--132. IEEE, 2005.

\bibitem{liu2021noisy}
T.~Liu, Y.~Li, S.~Wei, E.~Zhou, and T.~Zhao.
\newblock Noisy gradient descent converges to flat minima for nonconvex matrix factorization.
\newblock In {\em International Conference on Artificial Intelligence and Statistics}, pages 1891--1899. PMLR, 2021.

\bibitem{lojasiewicz1984trajectoires}
S.~\L{}ojasiewicz.
\newblock Sur les trajectoires du gradient d’une fonction analytique.
\newblock {\em Seminari di geometria 1982-1983}, pages 115--117, 1984.

\bibitem{marcotte2024abide}
S.~Marcotte, R.~Gribonval, and G.~Peyr{\'e}.
\newblock Abide by the law and follow the flow: Conservation laws for gradient flows.
\newblock {\em Advances in neural information processing systems}, 36, 2024.

\bibitem{marion2024deep}
P.~Marion and L.~Chizat.
\newblock Deep linear networks for regression are implicitly regularized towards flat minima.
\newblock {\em Advances in Neural Information Processing Systems}, 37:76848--76900, 2024.

\bibitem{mulayoff2020unique}
R.~Mulayoff and T.~Michaeli.
\newblock Unique properties of flat minima in deep networks.
\newblock In {\em International conference on machine learning}, pages 7108--7118. PMLR, 2020.

\bibitem{nesterov2003introductory}
Y.~Nesterov.
\newblock {\em Introductory lectures on convex optimization: A basic course}, volume~87.
\newblock Springer Science \& Business Media, 2003.

\bibitem{ouyang2025burer}
W.~Ouyang, T.~K. Pong, and M.-C. Yue.
\newblock Burer-monteiro factorizability of nuclear norm regularized optimization.
\newblock {\em arXiv preprint arXiv:2505.00349}, 2025.

\bibitem{pham2020local}
T.-S. Pham.
\newblock Local minimizers of semi-algebraic functions from the viewpoint of tangencies.
\newblock {\em SIAM Journal on Optimization}, 30(3):1777--1794, 2020.

\bibitem{pillay1986definable}
A.~Pillay and C.~Steinhorn.
\newblock {Definable sets in ordered structures. I}.
\newblock {\em Transactions of the American Mathematical Society}, 295(2):565--592, 1986.

\bibitem{rockafellar1970convex}
R.~T. Rockafellar.
\newblock {\em Convex analysis}, volume~28.
\newblock Princeton university press, 1970.

\bibitem{rockafellar2009variational}
R.~T. Rockafellar and R.~J.-B. Wets.
\newblock {\em Variational analysis}, volume 317.
\newblock Springer Science \& Business Media, 2009.

\bibitem{srebro2005rank}
N.~Srebro and A.~Shraibman.
\newblock Rank, trace-norm and max-norm.
\newblock In {\em International conference on computational learning theory}, pages 545--560. Springer, 2005.

\bibitem{valavi2020revisiting}
H.~Valavi, S.~Liu, and P.~Ramadge.
\newblock Revisiting the landscape of matrix factorization.
\newblock In {\em International Conference on Artificial Intelligence and Statistics}, pages 1629--1638. PMLR, 2020.

\bibitem{van1984remarks}
L.~van~den Dries.
\newblock {Remarks on Tarski's problem concerning ($\mathbb{R}$,+,$\cdot$, exp)}.
\newblock In {\em Studies in Logic and the Foundations of Mathematics}, volume 112, pages 97--121. Elsevier, 1984.

\bibitem{van1998tame}
L.~van~den Dries.
\newblock {\em Tame topology and o-minimal structures}, volume 248.
\newblock Cambridge university press, 1998.

\bibitem{van1996geometric}
L.~van~den Dries and C.~Miller.
\newblock Geometric categories and o-minimal structures.
\newblock {\em Duke Mathematical Journal}, 84(2):497--540, 1996.

\bibitem{wu2018sgd}
L.~Wu, C.~Ma, et~al.
\newblock How sgd selects the global minima in over-parameterized learning: A dynamical stability perspective.
\newblock {\em Advances in Neural Information Processing Systems}, 31, 2018.

\bibitem{zhao2023symmetries}
B.~Zhao, I.~Ganev, R.~Walters, R.~Yu, and N.~Dehmamy.
\newblock Symmetries, flat minima, and the conserved quantities of gradient flow.
\newblock {\em ICLR}, 2023.

\end{thebibliography}
\end{document}